\definecolor{Chocolat}{rgb}{0, 0.15, 0.05}
\definecolor{BleuTresFonce}{rgb}{0, 0.15, 0.05}
\theoremstyle{plain}
\newtheorem{thm}{Theorem}[section]
\newtheorem{theorem}[thm]{Theorem}
\newtheorem{lemma}[thm]{Lemma}
\newtheorem{proposition}[thm]{Proposition}
\newtheorem*{claim*}{Claim}
\newtheorem*{thm*}{Theorem}
\theoremstyle{definition}
\newtheorem{remark}[thm]{Remark}
\newtheorem{example}[thm]{\sc Example}
\numberwithin{equation}{section}
\definecolor{bluen}{RGB}{0,100,200}
  \pgfmathsetlength{\pgfutil@tempdimb}{.5*\pgflinewidth-.5*\pgfinnerlinewidth}%
  \pgfmathsetlength{\pgfutil@tempdima}{.5*\pgfutil@tempdimb+.5*\pgfinnerlinewidth}%
\tikzset{snake it/.style={decorate, decoration=snake}}
\newcommand\mydots{\hbox to 1em{.\hss.\hss.}}
\newcommand{\ul}[1]{\underline{#1}}
\newcommand{\nin}{\not\in}
\newcommand{\inc}{\subseteq}
\newcommand{\set}[1]{\{#1\}}
\newcommand{\setc}[2]{\set{#1 \mid #2}}
\newcommand{\union}{\cup}		
\newcommand{\inter}{\cap}		
\newcommand{\Union}{\bigcup}		
\newcommand{\restrH}[2]{\hyper{#1}\backslash #2}
\newcommand{\hyper}[1]{{\bf #1}}
\newcommand{\clandec}{\;\uline{\leadsto}\;}
\newcommand{\deldec}{\;\uuline{\leadsto}\;}
\newcommand{\bcdot}{\,{\bm \cdot}\,}
\newcommand{\precstar}{\prec\!\ast}
\newcommand{\astsucc}{\ast\!\succ}
\newcommand{\bcdotass}{\!\bcdot \!{\it ass}}
\newcommand{\succbcdot}{\succ\!\!\bcdot\!}
\newcommand{\bcdotprec}{\!\bcdot\!\!\prec}
 \newcommand{\precbcdotsucc}{\prec\!\!\bcdot\!\!\succ}
 \newcommand{\plunderline}[1]{#1}
 \newcommand{\incs}{\subsetneq}
 \newcommand{\pbullet}{\mbox{{\scriptsize $\bullet$}}}
 \newcommand{\Friezo}{\hyper{F}}
\newcommand{\restrconstr}[2]{{#1}_{{}^\lceil {#2}}}
\def\mathraise#1#2#3#4{
\mathchoice
{\raisebox{#2}{$\displaystyle{#1}$}}
{\raisebox{#2}{$#1$}}
{\raisebox{#3}{$\scriptstyle{#1}$}}
{\raisebox{#4}{$\scriptscriptstyle{#1}$}}
}
\def\mathraiseord#1#2#3#4{\mathord{\mathraise#1{#2}{#3}{#4}}}
\def\valup{\mathraiseord{\uparrow}{0.29ex}{0.21ex}{0.16ex}}
\definecolor{mauve}{HTML}{9966CC}
\definecolor{rose}{HTML}{DE5D83}
\definecolor{bleu}{HTML}{5AC1C9}
\title{Tridendriform algebras on hypergraph polytopes}
\author{Pierre-Louis Curien $^\dagger$}
\author{Bérénice Delcroix-Oger$^\ast$}
\author{Jovana Obradovi\' c $^\ddagger$} 
\address[$\dagger$]{Université Paris Cité, CNRS, Inria, IRIF, $\pi r^2$ project-team, F-75006, Paris, France}
\address[$\ddagger$]{Mathematical Institute of the Serbian Academy of Sciences and Arts, Belgrade, Serbia}
\address[$\ast$]{Université de Montpellier, CNRS, IMAG, F-34090, Montpellier, France}
\address[]{{curien@irif.fr}, {berenice.delcroix-oger@umontpellier.fr}, {jovana@mi.sanu.ac.rs}. This work was partially supported by ANR projects ANR-20-CE40-0016 HighAGT, ANR-20-CE40-0007 CARPLO, ANR-20-CE48-0010 SSS and ANR-19-CE40-0006 ALCOHOL.}
\keywords{tridendriform structure, polydendriform structure, associative product, shuffle product, hypergraph polytopes, nestohedra}
\begin{document}

\begin{abstract}
{We extend the works of Loday-Ronco and Burgunder-Ronco on the tridendriform decomposition of the shuffle product on the faces of associahedra and permutohedra, to other families of hypergraph polytopes (or nestohedra), including simplices, hypercubes and some new families. We also extend the shuffle product to  take more than two arguments, and define accordingly a new algebraic structure, that we call \emph{polydendriform}, from which the original tridendriform equations can be crisply synthesized.}
\end{abstract}

\maketitle

\tableofcontents
 
 \section{Introduction} \label{Intro-section}
In 1998, Loday-Ronco introduced a Hopf algebra on the linear 
span of rooted planar binary trees \cite{LR-planar-Hopf}. 
This Hopf algebra is closely related to the Malvenuto-
Reutenauer Hopf algebra on permutations \cite{MalvReut}. 
Planar binary trees and permutations label the vertices of 
two well-known families of  polytopes: associahedra and 
permutohedra. The associative products of these Hopf algebras 
were then extended to associative products on all faces of 
these polytopes labeled respectively by planar trees and 
surjections by Loday-Ronco \cite{LR02} and Burgunder-Ronco 
\cite{BR}. 
More precisely, Loday-Ronco introduced an associative product 
$\ast$ on planar trees as a shuffle of trees, where the 
shuffle $T \ast S$ of trees $T$ and $S$ is defined as a 
formal sum of trees whose nodes originate either from $T$, or 
from $S$, or from merging a node of $S$ with a node of $T$. 
Loday and Ronco remarked that it is possible to split this 
product $\ast$  according to where the roots of the resulting trees originate from, giving rise to three operations ``$\prec$'', ``$\succ$'' and ``$\bcdot$'', with $\ast=(\prec) + (\succ) +  (\bcdot)$, forming an algebraic structure called tridendriform. 
For instance, the following product
\begin{align*}
\begin{tikzpicture}[scale=0.3,color=bleu, very thick]
\draw (-1,1)--(0,0)--(2,2);
\draw (1,1)--(-1,3);
\draw (0,2)--(1,3);
\draw[black, very thick] (3.5,1.5) node{$\ast$};
\end{tikzpicture}
\begin{tikzpicture}[scale=0.3,color=rose, very thick]
\draw (-2,2)--(0,0)--(1,1);
\draw (-1,1)--(0,2);
\draw[black] (2,1.5) node{$=$};
\end{tikzpicture}
\begin{tikzpicture}[scale=0.27, very thick]
\draw[bleu] (-1,1)--(0,0)--(2,2);
\draw[bleu] (1,1)--(0,2)--(-0.8,3);
\draw[bleu] (0,2)--(0.8,3);
\draw[rose](0.4,4)--(2,2)--(2.8,3);
\draw[rose](1.2,3)--(2,4);
\end{tikzpicture}
\begin{tikzpicture}[scale=0.3, very thick]
\draw[black] (-2.5,1.5) node{$+$};
\draw[bleu] (-1,1)--(0,0)--(1,1);
\draw[bleu] (1,1)--(-0.5,2)--(-1,3);
\draw[bleu] (-0.5,2)--(0,3);
\draw[rose](0.5,3)--(1,2)--(1.5,3);
\draw[rose](1,1)--(2,2);
\draw[mauve] (1,2)--(1,1);
\end{tikzpicture}
\begin{tikzpicture}[scale=0.27, very thick]
\draw[black] (-2,1.5) node{$+$};
\draw[bleu] (-1,1)--(0,0)--(1,1);
\draw[rose](0,2)--(1,1)--(2,2);
\draw[bleu] (-1,3)--(0,2)--(1,3);
\draw[bleu] (-1.8,4)--(-1,3)--(-0.2,4);
\draw[rose](0.2,4)--(1,3)--(1.8,4);
\end{tikzpicture}
\begin{tikzpicture}[scale=0.3, very thick]
\draw[black] (-2.5,0.5) node{$+$};
\draw[bleu] (0,0)--(1,-1)--(2,0);
\draw[bleu] (1,1)--(-0.5,2)--(-1,3);
\draw[bleu] (-0.5,2)--(0,3);
\draw[rose](1,1)--(2,0)--(3,1);
\draw[rose](1,1)--(2,2);
\draw[mauve] (1,2)--(1,1);
\end{tikzpicture}
\begin{tikzpicture}[scale=0.25, very thick]
\draw[black] (-2,1.5) node{$+$};
\draw[bleu] (-1,1)--(0,0)--(1,1);
\draw[rose](-1,3)--(1,1)--(2,2);
\draw[rose] (0,2)--(1,3);
\draw[bleu] (-2,4)--(-1,3)--(0,4);
\draw[bleu] (-2.8,5)--(-2,4)--(-1.2,5);
\end{tikzpicture}
\\
\begin{tikzpicture}[scale=0.3, very thick]
\draw[black] (-2,1.5) node{$+$};
\draw[bleu] (-1,1)--(0,0);
\draw[mauve] (0,0)--(0,1);
\draw[rose](0,0)--(1,1);
\draw[bleu] (-1,2)--(0,1)--(1,2);
\draw[bleu] (-1.8,3)--(-1,2)--(-0.2,3);
\draw[rose] (0.2,3)--(1,2)--(1.8,3);
\end{tikzpicture}
\begin{tikzpicture}[scale=0.3, very thick]
\draw[black] (-2,1.5) node{$+$};
\draw[bleu] (-1,1)--(0,0);
\draw[mauve] (0,0)--(0,1);
\draw[rose](0,0)--(1,1);
\draw[bleu] (-1,2)--(0,1);
\draw[mauve] (0,1)--(0,2);
\draw[rose](0,1)--(1,2);
\draw[bleu] (-2,3)--(-1,2)--(0,3);
\end{tikzpicture}
\begin{tikzpicture}[scale=0.3, very thick]
\draw[black] (-2,1.5) node{$+$};
\draw[bleu] (-1,1)--(0,0);
\draw[mauve] (0,0)--(0,1);
\draw[rose](0,0)--(1,1);
\draw[rose](-1,2)--(0,1)--(1,2);
\draw[bleu] (-2,3)--(-1,2)--(0,3);
\draw[bleu] (-3,4)--(-2,3)--(-1,4);
\end{tikzpicture}
\begin{tikzpicture}[scale=0.25, very thick]
\draw[black] (-3,1.5) node{$+$};
\draw[rose] (-1,1)--(0,0)--(1,1);
\draw[bleu](-2,2)--(-1,1)--(1,3);
\draw[rose] (0.2,4)--(1,3)--(1.8,4);
\draw[bleu] (0,2)--(-1,3)--(-1.8,4);
\draw[bleu] (-1,3)--(-0.2,4);
\end{tikzpicture}
\begin{tikzpicture}[scale=0.27, very thick]
\draw[black] (-3,1.5) node{$+$};
\draw[rose] (-1,1)--(0,0)--(1,1);
\draw[bleu](-2,2)--(-1,1)--(0,2);
\draw[rose] (0,2)--(1,3);
\draw[mauve] (0,2)--(0,3);
\draw[bleu] (0,2)--(-1,3)--(-1.8,4);
\draw[bleu] (-1,3)--(-0.2,4);
\end{tikzpicture}
\begin{tikzpicture}[scale=0.25, very thick]
\draw[black] (-3,1.5) node{$+$};
\draw[rose] (-1,1)--(0,0)--(1,1);
\draw[bleu](-2,2)--(-1,1)--(0,2);
\draw[rose] (-1,3)--(0,2)--(1,3);
\draw[bleu] (0,4)--(-1,3)--(-2,4)--(-2.8,5);
\draw[bleu] (-1.2,5)--(-2,4);
\end{tikzpicture}
\begin{tikzpicture}[scale=0.27, very thick]
\draw[black] (-3,1.5) node{$+$};
\draw[rose] (-1,1)--(0,0)--(1,1);
\draw[bleu](-2,2)--(-1,1);
\draw[mauve](-1,1)--(-1,2);
\draw[rose] (-1,1)--(0,2);
\draw[bleu] (-2,3)--(-1,2)--(0,3);
\draw[bleu] (-3,4)--(-2,3)--(-1,4);
\end{tikzpicture}
\begin{tikzpicture}[scale=0.25, very thick]
\draw[black] (-2,-0.5) node{$+$};
\draw[rose](0,0)--(2,-2)--(3,-1);
\draw[rose](1,-1)--(2,0);
\draw[bleu] (-1,1)--(0,0)--(2,2);
\draw[bleu] (1,1)--(0,2)--(-0.8,3);
\draw[bleu] (0,2)--(0.8,3);
\end{tikzpicture}
\end{align*}
is split into
\begin{align*}
\begin{tikzpicture}[scale=0.3,color=bleu, very thick]
\draw (-1,1)--(0,0)--(2,2);
\draw (1,1)--(-1,3);
\draw (0,2)--(1,3);
\draw[black, very thick] (3.5,1.5) node{$\prec$};
\end{tikzpicture}
\begin{tikzpicture}[scale=0.3,color=rose, very thick]
\draw (-2,2)--(0,0)--(1,1);
\draw (-1,1)--(0,2);
\draw[black] (2,1.5) node{$=$};
\end{tikzpicture}&
\begin{tikzpicture}[scale=0.27, very thick]
\draw[bleu] (-1,1)--(0,0)--(2,2);
\draw[bleu] (1,1)--(0,2)--(-0.8,3);
\draw[bleu] (0,2)--(0.8,3);
\draw[rose](0.4,4)--(2,2)--(2.8,3);
\draw[rose](1.2,3)--(2,4);
\end{tikzpicture}
\begin{tikzpicture}[scale=0.3, very thick]
\draw[black] (-2.5,1.5) node{+};
\draw[bleu] (-1,1)--(0,0)--(1,1);
\draw[bleu] (1,1)--(-0.5,2)--(-1,3);
\draw[bleu] (-0.5,2)--(0,3);
\draw[rose](0.5,3)--(1,2)--(1.5,3);
\draw[rose](1,1)--(2,2);
\draw[mauve] (1,2)--(1,1);
\end{tikzpicture}
\begin{tikzpicture}[scale=0.27, very thick]
\draw[black] (-2,1.5) node{+};
\draw[bleu] (-1,1)--(0,0)--(1,1);
\draw[rose](0,2)--(1,1)--(2,2);
\draw[bleu] (-1,3)--(0,2)--(1,3);
\draw[bleu] (-1.8,4)--(-1,3)--(-0.2,4);
\draw[rose](0.2,4)--(1,3)--(1.8,4);
\end{tikzpicture}
\begin{tikzpicture}[scale=0.3, very thick]
\draw[black] (-2.5,0.5) node{+};
\draw[bleu] (0,0)--(1,-1)--(2,0);
\draw[bleu] (1,1)--(-0.5,2)--(-1,3);
\draw[bleu] (-0.5,2)--(0,3);
\draw[rose](1,1)--(2,0)--(3,1);
\draw[rose](1,1)--(2,2);
\draw[mauve] (1,2)--(1,1);
\end{tikzpicture}
\begin{tikzpicture}[scale=0.25, very thick]
\draw[black] (-2,1.5) node{+};
\draw[bleu] (-1,1)--(0,0)--(1,1);
\draw[rose](-1,3)--(1,1)--(2,2);
\draw[rose] (0,2)--(1,3);
\draw[bleu] (-2,4)--(-1,3)--(0,4);
\draw[bleu] (-2.8,5)--(-2,4)--(-1.2,5);
\end{tikzpicture} \\
\begin{tikzpicture}[scale=0.3,color=bleu, very thick]
\draw (-1,1)--(0,0)--(2,2);
\draw (1,1)--(-1,3);
\draw (0,2)--(1,3);
\draw[black, very thick] (3.5,1.5) node{$\bcdot$};
\end{tikzpicture}
\begin{tikzpicture}[scale=0.3,color=rose, very thick]
\draw (-2,2)--(0,0)--(1,1);
\draw (-1,1)--(0,2);
\draw[black] (2,1.5) node{=};
\end{tikzpicture}&
\begin{tikzpicture}[scale=0.3, very thick]
\draw[bleu] (-1,1)--(0,0);
\draw[mauve] (0,0)--(0,1);
\draw[rose](0,0)--(1,1);
\draw[bleu] (-1,2)--(0,1)--(1,2);
\draw[bleu] (-1.8,3)--(-1,2)--(-0.2,3);
\draw[rose] (0.2,3)--(1,2)--(1.8,3);
\end{tikzpicture}
\begin{tikzpicture}[scale=0.3, very thick]
\draw[black] (-2,1.5) node{+};
\draw[bleu] (-1,1)--(0,0);
\draw[mauve] (0,0)--(0,1);
\draw[rose](0,0)--(1,1);
\draw[bleu] (-1,2)--(0,1);
\draw[mauve] (0,1)--(0,2);
\draw[rose](0,1)--(1,2);
\draw[bleu] (-2,3)--(-1,2)--(0,3);
\end{tikzpicture}
\begin{tikzpicture}[scale=0.3, very thick]
\draw[black] (-2,1.5) node{+};
\draw[bleu] (-1,1)--(0,0);
\draw[mauve] (0,0)--(0,1);
\draw[rose](0,0)--(1,1);
\draw[rose](-1,2)--(0,1)--(1,2);
\draw[bleu] (-2,3)--(-1,2)--(0,3);
\draw[bleu] (-3,4)--(-2,3)--(-1,4);
\end{tikzpicture} \\
\begin{tikzpicture}[scale=0.3,color=bleu, very thick]
\draw (-1,1)--(0,0)--(2,2);
\draw (1,1)--(-1,3);
\draw (0,2)--(1,3);
\draw[black, very thick] (3.5,1.5) node{$\succ$};
\end{tikzpicture}
\begin{tikzpicture}[scale=0.3,color=rose, very thick]
\draw (-2,2)--(0,0)--(1,1);
\draw (-1,1)--(0,2);
\draw[black] (2,1.5) node{=};
\end{tikzpicture}&
\begin{tikzpicture}[scale=0.25, very thick]
\draw[rose] (-1,1)--(0,0)--(1,1);
\draw[bleu](-2,2)--(-1,1)--(1,3);
\draw[rose] (0.2,4)--(1,3)--(1.8,4);
\draw[bleu] (0,2)--(-1,3)--(-1.8,4);
\draw[bleu] (-1,3)--(-0.2,4);
\end{tikzpicture}
\begin{tikzpicture}[scale=0.27, very thick]
\draw[black] (-3,1.5) node{+};
\draw[rose] (-1,1)--(0,0)--(1,1);
\draw[bleu](-2,2)--(-1,1)--(0,2);
\draw[rose] (0,2)--(1,3);
\draw[mauve] (0,2)--(0,3);
\draw[bleu] (0,2)--(-1,3)--(-1.8,4);
\draw[bleu] (-1,3)--(-0.2,4);
\end{tikzpicture}
\begin{tikzpicture}[scale=0.25, very thick]
\draw[black] (-3,1.5) node{+};
\draw[rose] (-1,1)--(0,0)--(1,1);
\draw[bleu](-2,2)--(-1,1)--(0,2);
\draw[rose] (-1,3)--(0,2)--(1,3);
\draw[bleu] (0,4)--(-1,3)--(-2,4)--(-2.8,5);
\draw[bleu] (-1.2,5)--(-2,4);
\end{tikzpicture}
\begin{tikzpicture}[scale=0.27, very thick]
\draw[black] (-3,1.5) node{+};
\draw[rose] (-1,1)--(0,0)--(1,1);
\draw[bleu](-2,2)--(-1,1);
\draw[mauve](-1,1)--(-1,2);
\draw[rose] (-1,1)--(0,2);
\draw[bleu] (-2,3)--(-1,2)--(0,3);
\draw[bleu] (-3,4)--(-2,3)--(-1,4);
\end{tikzpicture}
\begin{tikzpicture}[scale=0.25, very thick]
\draw[black] (-2,-0.5) node{+};
\draw[rose](0,0)--(2,-2)--(3,-1);
\draw[rose](1,-1)--(2,0);
\draw[bleu] (-1,1)--(0,0)--(2,2);
\draw[bleu] (1,1)--(0,2)--(-0.8,3);
\draw[bleu] (0,2)--(0.8,3);
\end{tikzpicture}
\end{align*}

Burgunder and Ronco  applied a similar  ternary splitting to surjections, also known as packed words, and obtained also a tridendriform structure.

Associahedra and permutohedra are instances of polytopes called \emph{hypergraph polytopes}
 \cite{DP}, which are obtained by truncating some faces of simplices, and are also known as \emph{nestohedra} \cite{P09}. The description of faces of hypergraph polytopes in terms of tree structures -- called constructs -- given in \cite{COI}  provides an adapted framework to extend the setting of Loday-Ronco and Burgunder-Ronco to other families  of polytopes. 

We find it convenient to work in an  ``unbiased'' setting, where our operations may have any finite arity (think of the product $a\times b\times c$ of three numbers $a,b,c$ as opposed to $(a \times b)\times c$ or $a\times (b\times c)$). This leads us to a reformulation of the tridendriform structure (actually $q$-tridendriform  -- see below), that we call polydendriform.  We exhibit conditions under which we can define such a polydendriform structure.
 The underlying (binary) associative product that we obtain coincides with the associative product defined by Ronco \cite{RoncoGTO} on graph associahedra \cite{CD-CCGA}, which are a special type of hypergraph polytopes where the associated hypergraphs have only hyperedges of cardinality two. Our results apply also  to other families of hypergraph polytopes such as simplices, hypercubes and erosohedra.
 
 Therefore, with respect to  \cite{RoncoGTO}, our extension is two-fold:  we describe not only an associative product, but a tridendriform splitting of it, and our framework applies in situations that are not covered by graph associahedra.

The article is organized as follows. 
In  \S\ref{Prologue-section}, we explain in detail the case of the permutohedra,
and motivate and recall Burgunder-Ronco's notion of $q$-tridendriform algebra, i.e.,  an algebra with operations ``$\prec$'', ``$\succ$'' and ``$\bcdot$'',  satisfying the same equations as in tridendriform algebras, but with the associated (associative) product being now defined as $\ast = (\prec) + (\succ) + q( \bcdot)$  for an arbitrary $q \in \mathbb{K}$, where $\mathbb{K}$ is the ambient field.
In \S\ref{reminders-hypergraph-section}, we recall some notions on hypergraph polytopes and constructs.
In \S\ref{strict-teams}, we introduce our conditions for a family of polytopes to have a polydendriform algebra structure. We first define a so-called ``strict'' condition  that makes it possible to define $q$-tridendriform algebras, for arbitrary $q$. We then define a weaker condition called ``semi-strict'', which allows us to deal with a wider class of examples, but for which $q$ has to be $-1$.  In \S\ref{reminders-hypergraph-section} and \S\ref{strict-teams}, we provide a bunch of new examples that do not fit in the framework of graph associahedra, such as friezohedra, simplices, hypercubes and erosohedra.

\section{Prologue} \label{Prologue-section}

We recall Burgunder-Ronco's shuffle product on the faces of permutohedra~\cite{BR}.
We set $[n]=\set{1,\ldots,n}$, and identify a function $f:[n]\rightarrow X$ (for some set $X$) with the sequence $(f(1),\ldots,f(n))$.

\smallskip
By {\em surjection}, we mean a function $f:[m]\rightarrow[n]$ (for some $m,n\geq 1$) that is surjective.  
For arbitrary $h:[m]\rightarrow[n]$, we can build a surjection 
${\tt std}(h):=\phi\circ h: [m]\rightarrow [|{\tt Im}(h)|]$,
where $\phi$ is the unique increasing bijection ${\tt Im}(h)\rightarrow  [|{\tt Im}(h)|]$.
For example, we have
 ${\tt std}(1,4,3,4)=(1,3,2,3)$. Surjections are also known as packed words  \cite{HNT}.
 They label the faces of permutohedra, as shown in \cite{Ch00}.

\smallskip 
If $f:[m_1]\rightarrow[n_1]$ and $g:[m_2]\rightarrow[n_2]$ are surjections, we look for 
 all surjections $(h,k)$ such that
 ${\tt std}(h)=f$ and ${\tt std}(k)=g$. 
Note that we have  then ${\tt Im}(h,k)=[n]$, for some $\max(n_1,n_2)\leq n\leq n_1+n_2$. Below, we do this for 
$f:=(1,2,1)$ and $g:=(2,1)$, underlining the maximum elements  of $h$ and of $k$.

\smallskip 
\small{ $\bullet$ $n=2$:   $(1,\ul{2},1,\ul{2},1)$

\smallskip $\bullet$ $n=3$: $(1,\ul{2},1,\ul{3},1)$, $(1,\ul{3},1,\ul{3},2)$, $(2,\ul{3},2,\ul{2},1)$, $(1,\ul{2},1,\ul{3},2)$,  $(1,\ul{3},1,\ul{2},1)$,  $(2,\ul{3},2,\ul{3},1)$

\smallskip $\bullet$ $n=4$:
$(1,\ul{2},1,\ul{4},3)$, $(1,\ul{3},1,\ul{4},2)$,  $(1,\ul{4},1,\ul{3},2)$,  $(2,\ul{3},2,\ul{4},1)$, $(2,\ul{4},2,\ul{3},1)$, $(3,\ul{4},3,\ul{2},1)$.}
 
\smallskip
\noindent We collect those pairs in the following formal sums (cf. \S\ref{Intro-section}): 
$$\begin{array}{lll}
 f\prec g&  := & {(2,\ul{3},2,\ul{2},1)} 
 +  {(1,\ul{3},1,\ul{2},1)} 
 +  {(1,\ul{4},1,\ul{3},2)} 
 +  {(2,\ul{4},2,\ul{3},1)} 
 +  (3,\ul{4},3,\ul{2},1)  \\
 && \mathit{(\max(h)>\max(k))}\\
  f\bcdot g & := &  {(1,\ul{2},1,\ul{2},1)} +  {(1,\ul{3},1,\ul{3},2)} + { (2,\ul{3},2,\ul{3},1)} \\
  && \mathit{(\max(h)=\max(k))}\\
  {  f\succ g}&  := &  {  (1,\ul{2},1,\ul{3},1)} 
  +  {  (1,\ul{2},1,\ul{3},2)}
  +  {  (1,\ul{2},1,\ul{4},3)}   +  {  (1,\ul{3},1,\ul{4},2)}
    +   {  (2,\ul{3},2,\ul{4},1)} \\
   & &\mathit{(\max(h)<\max(k))}\\
  f\ast g&  := & ({  f\prec g}) + ({  f\bcdot g}) + ({ f\succ g}).
\end{array}$$
\normalsize
   The operations $\prec$, $\bcdot$ and $\succ$ satisfy the following {\em tridendriform} equations
\begin{align*}
&\mathit{(\precstar)}\; (a\prec b)\prec c=a\prec(b\ast c) &
&\mathit{(\succ\prec)}\;(a\succ b)\prec c=a\succ(b\prec c) \\
&\mathit{(\astsucc)} \; (a\ast b)\succ c=a\succ(b\succ c) &
&\mathit{(\bcdotass)} \;(a\bcdot b)\bcdot c=a\bcdot(b\bcdot c) \\
&\mathit{(\succbcdot)} \;(a\succ b)\bcdot c=a\succ(b\bcdot c) &
&\mathit{(\precbcdotsucc)} \; (a\prec b)\bcdot c=a\bcdot (b\succ c)\\
&\mathit{(\bcdotprec)} \; (a\bcdot b)\prec c=a\bcdot (b\prec c), 
&& \text{ and  the operation $\ast$ is associative.}
\end{align*}

\smallskip

The tridendriform structure was first recognized and defined by Loday and Ronco~\cite{LR} on Schr\"oder trees, i.e., planar trees without  unary nodes. We will denote such trees as $\bullet(T_1,\ldots,T_n)$, for $n\neq 1$, where $T_1, \ldots, T_n$ are themselves Schr\"oder trees. The tree with only one leaf is then $\bullet()$.
Schr\"oder trees with at least two leaves label the faces of associahedra.
The three tridendriform operations  (already illustrated in \S\ref{Intro-section}) are defined as follows (with the convention that $\bullet() \ast S=S=S\ast\bullet()$):
$$\begin{array}{lll}
\bullet(S_1,\ldots,S_n)\prec T := \bullet (S_1,\ldots,S_{n-1},S_n\ast T)\\
S\succ \bullet (T_1,\ldots,T_n):= \bullet(S\ast T_1,T_2,\ldots,T_n) \\
\bullet(S_1,\ldots,S_m) \:\bcdot\: \bullet(T_1,\ldots,T_n) \; :=\; \bullet(S_1,\ldots,S_{m-1}, S_m\ast T_1,T_2\ldots,T_n) .&&
\end{array}$$

Associahedra and permutohedra are examples of hypergraph polytopes, also known as nestohedra \cite{P09,FS05}. Our goal is to define in this more general framework, and under suitable conditions, an associative product, with associated tridendriform decomposition, instantiating to these two examples and  more.

\smallskip

We close this section by studying the relation between tridendriform structures and associativity more closely.  Burgunder and Ronco~\cite{BR} have introduced a variation of tridendriform algebras, called $q$-tridendriform  algebras (for $q\in \mathbb{R}$, or more generally $q\in {\Bbbk}$ for some field ${\Bbbk}$), where the equations are the same as above, except that now the operation $\bcdot$ is weighted, i.e., $a\ast b$ is redefined as $(a\prec b)+q(a\bcdot b)+(a\succ b)$. This is justified by the following proposition.
\begin{proposition} \label{tridend-implies-ass}
Setting $a\ast b:=\lambda_1(a\prec b)+\lambda_2(a\bcdot b)+\lambda_3(a\succ b)$, if the tridendriform equations are satisfied (with this definition of $\ast$), then $\ast$ is associative if  $\lambda_1=\lambda_3=1$.
\end{proposition}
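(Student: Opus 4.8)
The plan is a completely explicit computation. I expand both $(a\ast b)\ast c$ and $a\ast(b\ast c)$ using the bilinearity of $\ast$ and its definition $a\ast b=\lambda_1(a\prec b)+\lambda_2(a\bcdot b)+\lambda_3(a\succ b)$, then normalise every term by the seven tridendriform equations until the two sides are written as linear combinations of the \emph{same} collection of elementary expressions, each of the form $x\,\theta_1\,(y\,\theta_2\,z)$ with $\theta_1,\theta_2$ among $\prec,\bcdot,\succ$, or of the form $x\,\theta\,(y\ast z)$, and finally I compare coefficients and specialise to $\lambda_1=\lambda_3=1$.

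In detail: write $(a\ast b)\ast c=\lambda_1\big((a\ast b)\prec c\big)+\lambda_2\big((a\ast b)\bcdot c\big)+\lambda_3\big((a\ast b)\succ c\big)$. In the first summand, expand the inner $a\ast b$ and normalise the three resulting terms by $(\precstar)$, $(\bcdotprec)$ and $(\succ\prec)$, so that $(a\ast b)\prec c$ becomes $\lambda_1\big(a\prec(b\ast c)\big)+\lambda_2\big(a\bcdot(b\prec c)\big)+\lambda_3\big(a\succ(b\prec c)\big)$. In the second summand, proceed likewise with $(\precbcdotsucc)$, $(\bcdotass)$ and $(\succbcdot)$, so that $(a\ast b)\bcdot c$ becomes $\lambda_1\big(a\bcdot(b\succ c)\big)+\lambda_2\big(a\bcdot(b\bcdot c)\big)+\lambda_3\big(a\succ(b\bcdot c)\big)$. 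In the third summand do \emph{not} expand the inner $a\ast b$, but apply $(\astsucc)$ directly to get $a\succ(b\succ c)$. Symmetrically, write $a\ast(b\ast c)=\lambda_1\big(a\prec(b\ast c)\big)+\lambda_2\big(a\bcdot(b\ast c)\big)+\lambda_3\big(a\succ(b\ast c)\big)$, leave the first summand folded, and expand $b\ast c$ by bilinearity in the other two.

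Subtracting the two expansions, the four types of term $a\bcdot(b\prec c)$, $a\succ(b\prec c)$, $a\bcdot(b\bcdot c)$ and $a\succ(b\bcdot c)$ occur with equal coefficients ($\lambda_1\lambda_2$, $\lambda_1\lambda_3$, $\lambda_2^2$, $\lambda_2\lambda_3$ respectively) on both sides and cancel, leaving
\[
(a\ast b)\ast c-a\ast(b\ast c)=(\lambda_1^2-\lambda_1)\,a\prec(b\ast c)+\lambda_2(\lambda_1-\lambda_3)\,a\bcdot(b\succ c)+(\lambda_3-\lambda_3^2)\,a\succ(b\succ c).
\]
All three coefficients on the right vanish when $\lambda_1=\lambda_3=1$, with $\lambda_2$ left completely free (it is the weight $q$ of the $q$-tridendriform calculus), so $\ast$ is associative.

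The computation presents no real obstacle; the one delicate point is organisational, namely the choice of which subterms to keep ``folded''. On the left one must use $(\precstar)$ to fold $(a\prec b)\prec c$ \emph{back} into $a\prec(b\ast c)$ so that it lines up with the outer summand $\lambda_1\,a\prec(b\ast c)$ on the right, and $(\astsucc)$ must be applied to the whole of $(a\ast b)\succ c$ rather than to its expansion (for which no normalising equation is available). It is exactly this matching of an ``outer'' coefficient $\lambda_i$ against an ``outer times inner'' coefficient $\lambda_i^2$ that forces $\lambda_1^2=\lambda_1$ and $\lambda_3^2=\lambda_3$, while the remaining cross-term forces $\lambda_2(\lambda_1-\lambda_3)=0$; all three conditions hold for $\lambda_1=\lambda_3=1$.
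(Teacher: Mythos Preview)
Your proof is correct and follows essentially the same approach as the paper: both expand $(a\ast b)\ast c$ fully in the first two outer summands while applying $(\astsucc)$ directly to $(a\ast b)\succ c$, and symmetrically keep $a\prec(b\ast c)$ folded on the other side, then match terms via the seven tridendriform equations. Your version goes slightly further by writing out the explicit associator $(\lambda_1^2-\lambda_1)\,a\prec(b\ast c)+\lambda_2(\lambda_1-\lambda_3)\,a\bcdot(b\succ c)+(\lambda_3-\lambda_3^2)\,a\succ(b\succ c)$, which makes transparent exactly which coefficient constraints arise, whereas the paper simply lists the three matchings requiring $\lambda_1=1$, $\lambda_3=1$, $\lambda_1=\lambda_3$.
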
 
\begin{proof} We match
 $$\begin{array}{ll}
 \lambda_1\lambda_1 \underbrace{ (a\prec b)\prec c}_{(\precstar)} \:+\: \lambda_1\lambda_2\underbrace{(a\bcdot b)\prec c}_{(\bcdotprec)} \:+\: \lambda_1\lambda_3 
\underbrace{(a\succ b)\prec c}_{(\succ\prec)}\\
  \:+\: \lambda_2\lambda_1 \underbrace{(a\prec b)\bcdot c}_{(\precbcdotsucc)} \:+\: \lambda_2\lambda_2 \underbrace{(a\bcdot b)\bcdot c}_{(\bcdotass)}
\:+\: \lambda_2\lambda_3 \underbrace{(a\succ b)\bcdot c}_{(\succbcdot)}\\
 \:+\: \lambda_3\underbrace{ (a\ast b)\succ c}_{(\astsucc)}
 \end{array}$$
 with
 $$\begin{array}{ll}
 \lambda_1 \underbrace{a\prec (b\ast c)}_{(\precstar)} \\
 \:+\:
\lambda_2 \lambda_1 \underbrace{ a\bcdot (b\prec c)}_{(\bcdotprec)} \:+\: \lambda_2\lambda_2 \underbrace{a\bcdot (b\bcdot c)}_{(\bcdotass)}\:+\:
\lambda_2\underbrace{\lambda_3 a\bcdot (b\succ c)}_{(\precbcdotsucc)} \\
 \:+\: \lambda_3\lambda_1 \underbrace{a\succ(b\prec c)}_{(\succ\prec)} \:+\: \lambda_3\lambda_2 \underbrace{a\succ(b\bcdot c)}_{(\succbcdot)} \:+\:
\lambda_3\lambda_3 \underbrace{a\succ(b\succ c)}_{(\astsucc)}
\end{array}$$
 using $(\precstar)$ (resp. $(\astsucc)$, $(\precbcdotsucc)$) and the assumption $\lambda_1=1$ (resp.  $\lambda_3=1$, $\lambda_1=\lambda_3$).
\end{proof}
\section{Hypergraph polytopes} \label{reminders-hypergraph-section}

A hypergraph is given by a set  $H$ of vertices (the carrier), and a subset 
$\hyper{H}\inc {\mathcal{P}}(H)\backslash\emptyset$ such that $\Union \hyper{H}=H$.
 The elements of $\hyper{H}$ are called the {\em hyperedges} of $\hyper{H}$.  
 We always assume that $\hyper{H}$ is {\em atomic}, by which we mean that 
 $\set{x}\in \hyper{H}$, for all $x\in H$. 
 Identifying $x$ with $\set{x}$, $H$ can be seen as the set of  hyperedges of 
 cardinality $1$, also called {\em vertices}. We shall use the convention to 
 give the same name to the hypergraph and to its carrier, the former being 
 the bold version of the latter. 
A hyperedge of cardinality 2 is called an {\em edge}.  Note that any ordinary graph $(V,E)$ can be viewed as the atomic hypergraph
$\setc{\set{v}}{v\in V} \union \setc{e}{e\in E}$ (with no hyperedges of cardinality $\geq 3$). 

\smallskip
 
If $\hyper{H}$ is a hypergraph,  and if  $X\inc H$, we set
$\hyper{H}_X:=\setc{Z}{Z\in \hyper{H}\;\mbox{and}\; Z\inc X}$, and $\restrH{H}{X}=\hyper{H}_{H\backslash X}$.
We say that $\hyper{H}$ is {\em connected} if there is no non-trivial partition $H=X_1\union X_2$ such that $\hyper{H}=\hyper{H}_{X_1}\union \hyper{H}_{X_2}$, and that $X\inc H$ is connected in $\hyper{H}$ if $\hyper{H}_X$ is connected.
For each finite hypergraph there exists a partition
$H=X_1\union\ldots\union X_m$ such that each $\hyper{H}_{X_i}$ is connected and $\hyper{H}=\Union(\hyper{H}_{X_i})$.  The $\hyper{H}_{X_i}$ are  the {\em connected components} of $\hyper{H}$. The notation
$\hyper{H},X  \leadsto \hyper{H}_1,\ldots, \hyper{H}_n$
 will mean that  $\hyper{H}_1,\ldots,\hyper{H}_n$ are  the
 connected components of $\restrH{H}{X}$.  

\smallskip

Do\v sen and Petri\'c~\cite{DP} have proposed the following insightful reading of the data of a finite connected hypergraph $\hyper{H}$ as a truncated simplex: the elements of $H$ are identified with the facets (i.e. codimension 1 faces) of the $(|H|-1)$-dimensional simplex, and each $\emptyset\incs X\incs H$, $|X|\geq 2$, such that    $\hyper{H}_X$ is connected designates the intersection of the facets in $X$ as a face to be truncated.
The obtained polytopes, called \emph{hypergraph polytopes}, extend the construction of graph associahedra \cite{CD-CCGA, Zel06}, and are equivalent to nestohedra, introduced by Postnikov \cite{P09}.  
Moreover,  the faces of the   polytope obtained by performing  all the prescribed truncations  are labeled by non-planar trees whose nodes are decorated by non-empty subsets of $H$, called {\em constructs}\footnote{\label{construct-tubing}Constructs as presented here are  just an alternative description of the tubings and of the nested sets  in the literature on graph associahedra and nestohedra, respectively. For a given construct $T$, each tube of the associated tubing is given by a node of $T$ and all its descendants. There are  as many tubes in the tubing as nodes in the construct.}
, whose recursive definition  is given next using a syntax introduced in \cite{COI}:

\smallskip
Let  $\emptyset\neq Y\subseteq H$. If   $\hyper{H},Y  \leadsto \hyper{H}_1,\ldots, \hyper{H}_n$, and if  $T_1,\ldots,T_n$ are constructs of $\hyper{H}_1,\ldots,\hyper{H}_n$, respectively, then the tree obtained by grafting $T_1,\ldots,T_n$ on the root node decorated by $Y$, denoted by $Y(T_1,\ldots,T_n)$ (or sometimes $Y\setc{T_i}{1\leq i\leq n}$), is a construct of  $\hyper{H}$. We write $Y=\mbox{root}(Y(T_1,\ldots,T_n))$. 
 The base case is when $Y=H$ (and hence $n=0$): then the one-node tree $H()$ (written simply $H$) is a construct.
We write $T:\hyper{H}$ to denote that $T$ is a construct of $\hyper{H}$.

\smallskip
This description of faces as trees is particularly nice for encoding face inclusions: by contracting an edge of a construct representing a face of dimension $p$, and  merging the decorations of the two nodes related by that edge, one gets a face of dimension $p+1$, as illustrated below. We shall not make use of this partial order on faces, but it helps in understanding what is going on in the pictures.
\begin{center}
\resizebox{7.75cm}{!}{\begin{tikzpicture}[scale=0.7]
    \node (Y) [rectangle,draw=none,minimum size=0.75cm,inner sep=0.0mm] at (0,-0.9) {\Large $Y$};
    \node (X) [rectangle,draw=none,minimum size=0.75cm,inner sep=0.1mm]  at (-1.5,1) {\Large $X$};
    \node (T11) [circle,draw=none,minimum size=0.75cm,inner sep=0.1mm] at (-2.5,2.5) {\large  $T_{11}$};
   \node (d1) [circle,draw=none,minimum size=0.75cm,inner sep=0.1mm] at (-1.5,2.5) { $\cdots$};
   \node (T1m) [circle,draw=none,minimum size=0.75cm,inner sep=0.1mm] at (-0.5,2.5) {\large $T_{1m}$};
   \node (T2) [circle,draw=none,minimum size=0.75cm,inner sep=0.1mm] at (1,2.5) {\large $T_{2}$};
   \node (d2) [circle,draw=none,minimum size=0.75cm,inner sep=0.1mm] at (2,2.5) {  $\cdots$};
   \node (Tn) [circle,draw=none,minimum size=0.75cm,inner sep=0.1mm] at (3,2.5) {\large  $T_{n}$};
 
  \node (E) [circle,draw=none,minimum size=0.75cm,inner sep=0.1mm] at (4.75,0.5) {\Large $\subseteq$};

   \node (Y1) [rectangle,draw=none,minimum size=0.75cm,inner sep=1.5mm] at (9,-0.9) {\Large  $Y\cup X$};
    \node (T111) [circle,draw=none,minimum size=0.75cm,inner sep=0.1mm] at (6,2.5) {\large  $T_{11}$};
   \node (d11) [circle,draw=none,minimum size=0.75cm,inner sep=0.1mm] at (7.25,2.5) { $\cdots$};
   \node (T1m1) [circle,draw=none,minimum size=0.75cm,inner sep=0.1mm] at (8.5,2.5) {\large  $T_{1m}$};
   \node (T21) [circle,draw=none,minimum size=0.75cm,inner sep=0.1mm] at (10,2.5) {\large  $T_{2}$};
   \node (d21) [circle,draw=none,minimum size=0.75cm,inner sep=0.1mm] at (11,2.5) {\large  $\cdots$};
   \node (Tn1) [circle,draw=none,minimum size=0.75cm,inner sep=0.1mm] at (12,2.5) { \large $T_{n}$};
\draw (T111)--(Y1);
\draw (T1m1)--(Y1);
\draw (Y1)--(T21);
\draw (Y1)--(Tn1);
\draw[very thick,red] (Y)--(X);
\draw (T11)--(X);
\draw (T1m)--(X);
\draw (Y)--(T2);
\draw (Y)--(Tn);
   \end{tikzpicture}
 }
\end{center}

We next give examples of hypergraph polytopes, most of which will be revisited later in the paper.
\begin{example}
{\bf Simplices} are ``encoded'' as the hypergraphs  $$\hyper{S}^X=\setc{\set{x}}{x\in X}\union\set{\set{X}}$$ (no truncation prescribed). The constructs have the form $Y(\set{y_1}\ldots,\set{y_k})$ where $\emptyset\incs Y\inc X$ and $\{y_1, \ldots, y_k\}=H\backslash Y$, pictured as \begin{tikzpicture}[scale=0.5, grow=up]\node{Y}
   child{node{$y_k$}}
   child{node{$\ldots$}}
   child{node{$y_2$}}
   child{node{$y_1$}};
\end{tikzpicture},
and are therefore in bijection with the non-empty subsets of $X$, which can also be seen as pairs $(X,Y)$ standing for $X$ in which all elements of $Y$ have been pointed.

\begin{figure}

\begin{tikzpicture}[scale=0.85]
\node[fill=White, ellipse] (A) at (0,-1){$\{{x}\}$};
\node[fill=Magenta, , ellipse] (B) at (0,5){$\{{y}\}$};
\node[fill=Yellow, ellipse] (C) at (-3,1){$\{{z}\}$};
\node[fill=Cyan, ellipse] (D) at (3,1){$\{{t}\}$};
\node[fill=Cyan!50!Yellow!50] (ACD) at (0.2,0){$\{{x,z,t}\}$};
\node[fill={rgb:Magenta,255;Cyan,255;Yellow,255}] (BCD) at (0.2,2.5){$\{{y,z,t}\}$};
\draw[dashed] (C)--(D) node [near end, draw, fill=Yellow!50!Cyan] {$\{{z,t}\}$};
\draw (A)--(C) node [midway, draw, fill=Yellow!50] {$\{x,z\}$};
\draw (A)--(D) node [midway, draw, fill=Cyan!50] {$\{{x,t}\}$};
\draw (C)--(B) node [midway, draw, fill=Magenta!50!Yellow] {$\{{y,z}\}$};
\draw (D)--(B) node [midway, draw, fill=Cyan!50!Magenta] {$\{{y,t}\}$};
\node[fill=Magenta!50!Yellow!50] (ABC) at (-1,1.5){$\{{x,y,z}\}$};
\node[fill=Magenta!50!Cyan!50] (ABD) at (1,1.75){$\{{x,y,t}\}$};
\draw (A)--(B) node [near end, draw, fill=Magenta!50] {$\{{x,y}\}$};
\end{tikzpicture}
\caption{Simplex $\hyper{S}^{\{x,y,z,t\}}$ labeled by the  subsets associated to each face. Faces of dimension $k$ are labeled by  sets of size $k+1$. 
We blended colors to represent inclusion of faces. The interior of the simplex is labeled by $\{{x,y,z,t}\}$.}
\end{figure} 
\end{example}
\begin{example}
In order to illustrate  how the hypergraph structure dictates truncations, consider the hypergraph 
$$\hyper{C}=\set{\set{x},\set{y},\set{z},\set{y,z}, \set{x,y,z}},$$
obtained from $\hyper{S}^{\{x,y,z\}}$ by adding the edge $\{y,z\}$. The construct 
 $\set{x}(\set{y},\set{z}):\hyper{S}^{\{x,y,z\}}$ is {\em not} a construct of $\hyper{C}$, since $\set{y,z}$ is connected in $\hyper{C}$.  Instead, $\hyper{C}$ features 3 new constructs: 
$\set{x}(\set{y}(\set{z}))$, $\set{x}(\set{z}(\set{y}))$ and $\set{x}(\set{y,z})$, encoding two  vertices and one edge, obtained by truncating  the vertex $\set{x}(\set{y},\set{z})$ of $\hyper{S}^{\{x,y,z\}}$.
We illustrate   this example in Figure \ref{figHyp}. In this figure, and elsewhere, we allow ourselves to write, say $xy$ for $\set{x,y}$. As we shall see, $\hyper{C}$ encodes the hypercube of dimension 2.

\begin{figure}
  \begin{tikzpicture}[thick]
\coordinate (x) at (0:1);
\coordinate (y) at (120:1);
\coordinate (z) at (240:1) ;
\fill[Magenta,opacity=0.5] (x)--(y)--(z)--(x);
\draw[line width=2pt] (y)--(z);
\draw (x) node[draw,ellipse, fill=white]{$x$};
\draw (y) node[draw,ellipse, fill=white]{$y$};
\draw (z) node[draw,ellipse, fill=white]{$z$};
\end{tikzpicture}
  \begin{tikzpicture}[thick]
\coordinate (x) at (0:1);
\coordinate (y) at (120:1);
\coordinate (z) at (240:1) ;
\draw[dotted] (y) --(x) node[midway, above]{$x$};
\draw[dotted] (y) --(z) node[midway, left]{$y$};
\draw[dotted] (z) --(x) node[midway, below]{$z$};
\draw [thick,cyan, fill=cyan,opacity=0.5]
      (barycentric cs:y=1,z=2) --
      (barycentric cs:x=1,z=2) --
      (x)--(y)--cycle;
\end{tikzpicture}
\begin{tikzpicture}[scale=0.5, grow=up]
\node{$x$}
   child{node{$y$}
		child{node{$z$}
		}   
   }
;
\end{tikzpicture}
\begin{tikzpicture}[scale=0.5, grow=up]
\node{$x$}
   child{node{$z$}
		child{node{$y$}
		}   
   }
;
\end{tikzpicture}
\begin{tikzpicture}[scale=0.5, grow=up]
\node{$y$}
   child{node{$x$}
   }
		child{node{$z$}
		}   
;
\end{tikzpicture}
\begin{tikzpicture}[scale=0.5, grow=up]
\node{$z$}
   child{node{$x$}
   }
		child{node{$y$}
		}   
;
\end{tikzpicture}
\begin{tikzpicture}[scale=0.5, grow=up]
\node{$x$}
   child{node{$yz$}
   }
;
\end{tikzpicture}
\begin{tikzpicture}[scale=0.5, grow=up]
\node{$xy$}
   child{node{$z$}
   }
;
\end{tikzpicture}
\begin{tikzpicture}[scale=0.5, grow=up]
\node{$xz$}
   child{node{$y$}
   }
;
\end{tikzpicture}
\begin{tikzpicture}[scale=0.5, grow=up]
\node{$yz$}
   child{node{$x$}
   }
;
\end{tikzpicture}
\begin{tikzpicture}[scale=0.5, grow=up]
\node{$xyz$};
\end{tikzpicture}
\caption{The hypergraph, the truncated simplex and the constructs associated with the hypercube $\hyper{C}$} \label{figHyp}
\end{figure}
\end{example}
\begin{example}
As a slightly more involved example, we show in Figure \ref{hemiassoc} the polytope encoded by the hypergraph 
$${\bf H}=\{\{x\},\{y\},\{u\},\{v\},\{x,y\},\{x,u\},\{x,v\},\{u,v\},\{x,y,u,v\}\},$$ obtained from the  tetrahedron by truncating three of its vertices and four of its edges. We also ``zoom in'' into the square corresponding to the truncation prescribed by $\{u,v\}$  and label its four 1-dimensional and four 0-dimensional faces by the appropriate constructs of ${\bf H}$. 

\begin{figure}
\centering
  \resizebox{4cm}{!}{
\begin{tikzpicture}[thick,scale=2]
\coordinate (A1) at (0,2);
\coordinate (A11) at (-0.39,1.5);
\coordinate (A111) at (-0.25,1.498);
\coordinate (A112) at (-0.33,1.46);
\coordinate (A12) at (0.39,1.5);
\coordinate (A121) at (0.25,1.498);
\coordinate (A122) at (0.33,1.46);
\coordinate (A13) at (0,1.25);
\coordinate (A131) at (-0.153,1.35); 
\coordinate (A132) at (0.153,1.35); 
\coordinate (A2) at (0,0); 
\coordinate (A21) at (-0.387,0.213); 
\coordinate (A211) at (-0.29,0.2795); 
\coordinate (A212) at (-0.28,0.213); 
\coordinate (A22) at (0.387,0.213); 
\coordinate (A23) at (0,0.5); 
\coordinate (A231) at (-0.153,0.388); 
\coordinate (A232) at (0.153,0.388); 
\coordinate (A3) at (-1.1,0.6);
\coordinate (A31) at (-0.9,0.49);
\coordinate (A311) at (-0.88,0.59);
\coordinate (A312) at (-0.84,0.51);
\coordinate (A32) at (-0.8,0.976);
\coordinate (A321) at (-0.825,0.845);
\coordinate (A322) at (-0.745,0.875);
\coordinate (A33) at (-0.6,0.6);
\coordinate (A4) at (1.1,0.6);
\coordinate (A41) at (1.027,0.565);
\coordinate (A42) at (0.95,0.6);
\draw[draw=ForestGreen]  (A3)--(A2);
\draw[draw=ForestGreen]  (A1)--(A12);
\draw[draw=ForestGreen] (A2)--(A22);
\draw[draw=ForestGreen] (A2) -- (A1);
\draw[draw=ForestGreen] (A3)--(A1);
\draw[draw=ForestGreen,dashed]  (A33) -- (A3);
\draw[draw=ForestGreen,dashed]  (A42) -- (A4);
\draw[draw=ForestGreen] (A41)--(A4)--(A12);
 \draw[draw=black,fill=none]   (A321)--(A311);
 \draw[draw=black,fill=none] (A212)-- (A22) -- (A41);
\draw (A311)--(A312);
\draw (A111)--(A121);
\draw (A111)--(A112);
\draw (A311)--(A211)--(A212) --(A312);
\draw (A112)--(A321);
\draw[dashed] (A321)--(A322)--(A111);
\draw  (A211)--(A231)--(A232)--(A22);
\draw[dashed]  (A33) -- (A42);
\draw (A231) -- (A131) -- (A132) -- (A232) -- cycle;
\draw[dashed] (A322) -- (A33) -- (A312);
\draw (A112) -- (A131) --(A132)--(A122);
\fill[cyan] (A41) -- (A42) -- (A121)--(A122)--cycle;
\draw[dashed]  (A41) -- (A42) -- (A121); 
\draw (A41)--(A122) --(A121);
\end{tikzpicture}} \quad\quad  
\raisebox{1em}{\begin{tikzpicture}[thick]
\coordinate (S1) at (-0.15,0);
\coordinate (S2) at (2.15,0);
\coordinate (S3) at (2.15,1);
\coordinate (S4) at (-0.15,1);
\draw[fill=cyan,opacity=0.3] (S1)--(S2)--(S3)--(S4)-- cycle;
\draw (S1)--(S2)--(S3)--(S4)-- cycle;
\node (s1) at (-1.55,-0.3) {\scriptsize $\{x\}(\{y\},\{u\}(\{v\}))$};
\node (s2) at (3.55,-0.3) {\scriptsize $\{x\}(\{y\},\{v\}(\{u\}))$};
\node (s4) at (-1.55,1.3) {\scriptsize $\{y\}(\{x\}(\{u\}(\{v\})))$};
\node (s3) at (3.55,1.3) {\scriptsize $\{y\}(\{x\}(\{v\}(\{u\})))$};
\node (s14) at (-1.35,0.5) {\scriptsize $\{x,y\}(\{u\}(\{v\}))$};
\node (s23) at (3.35,0.5) {\scriptsize $\{x,y\}(\{v\}(\{u\}))$};
\node (s12) at (1,-0.3) {\scriptsize $\{x\}(\{y\},\{u,v\})$};
\node (s34) at (1,1.3) {\scriptsize $\{y\}(\{x\},\{u,v\})$};
\node (s) at (1,0.5) {\scriptsize $\{x,y\}(\{u,v\})$};

\end{tikzpicture}}
\caption{   A truncated simplex \label{hemiassoc}}
\end{figure}
\end{example}

We now give two examples that  do not fit in the framework of graph associahedra: hypercubes and erosohedra.
 
\begin{example} {\bf Hypercubes}. \label{hypercube1-example}
For a finite ordered set $X=\set{x_1<\cdots<x_n}$, consider the hypergraph 
 
 $$\hyper{C}^X=\setc{\setc{x_j}{1\leq j\leq i}}{1\leq i\leq n}.$$
 
 \noindent
The constructs of $\hyper{C}^X$ are in one-to-one correspondence with the set of words of length $n$ over the
alphabet $\set{+,-,\pbullet}$ starting with $+$, and hence decorate the faces of an $(n-1)$-dimensional hypercube. More precisely, we recursively read a construct from such a word $(w_1+w_2)$, where $+$ does not occur in $w_2$,  as follows:
\begin{itemize} 
\item[-] The positions of the occurrences of $\pbullet$ in $w_2$  plus the last occurrence of $+$ in $w$, form the root $R$ of the construct. 
\item[-]  $w_1$ encodes a construct $S$ (if not empty). 
\item[-] The children of $R$ in the  construct are $S$ (if any), and the positions of  the occurrences of $-$ in $w_2$.
\end{itemize}
For instance, the constructs \begin{center}\resizebox{!}{1cm}
{\begin{tikzpicture}[scale=0.5, grow=up]
\node{$3$}
   child{node{$4$}
   }
   child{node{$1\ 2$}
   }
;
\end{tikzpicture}},
\resizebox{!}{1cm}
{\begin{tikzpicture}[scale=0.5, grow=up]
\node{$2 \ 3$}
   child{node{$4$}
   }
   child{node{$1$}
   }
;
\end{tikzpicture}},
\resizebox{!}{1.6cm}
{\begin{tikzpicture}[scale=0.5, grow=up]
\node{$4$}
   child{node{$1$}
          child{node{$3$}   }
          child{node{$2$}   }
   }
;
\end{tikzpicture}}
and 
\resizebox{!}{2.2cm}
{\begin{tikzpicture}[scale=0.5, grow=up]
\node{$4$}
   child{node{$3$}
          child{node{$2$}   
          child{node{$1$}   }}         
   }
;
\end{tikzpicture}}

\end{center}
of $\hyper{C}^{\{1, \ldots, 4\}}$ correspond to the words
\begin{center}
$+\bullet+-$  \quad\quad $++\bullet-$  \quad\quad $+--+$  \quad\quad and  \quad\quad $++++$\, ,
\end{center}
respectively. 
We have already listed all the constructs of the hypercube $\hyper{C}^{\{y<z<x\}}$ in Figure \ref{figHyp}.
The corresponding words are, in this order:
$$(+\!-\!+)\;\;(+\!+\!+)\;\;(+\!-\!-)\;\;(+\!+\!-)\;\;(+\,\pbullet\,+)\;\;(+\!-\pbullet)\;\;(+\!+\pbullet)\;\;(+\,\pbullet\,-)\;\;(+\,\pbullet\,\pbullet).$$
\end{example}
\begin{example}  {\bf Erosohedra}. \label{eroso} They are obtained by cutting every vertex in the simplex. We name them so by  analogy with erosion of rocks. Erosohedra in dimension 2 and 3 are represented on Figure \ref{fig:erosohedra}. The associated hypergraphs are given by:
\begin{align*}
 \hyper{E}^X=\setc{\setc{x_j}{j \neq i}}{1\leq i\leq n},
\end{align*} 
where $X=\{x_1, \ldots, x_n\}$.

\begin{figure}
  \resizebox{3cm}{!}{
\begin{tikzpicture}[thick,scale=2]
\coordinate (A1) at (0:1);
\coordinate (A2) at (120:1);
\coordinate (A3) at (240:1);

\draw[draw=ForestGreen]  (A1)--(A2)--(A3)--(A1);

\draw [thick,black]
      (barycentric cs:A1=2,A2=1,A3=0) --
      (barycentric cs:A1=1,A2=2,A3=0) --
      (barycentric cs:A1=0,A2=2,A3=1) --
            (barycentric cs:A1=0,A2=1,A3=2) --
            (barycentric cs:A1=1,A2=0,A3=2) --
            (barycentric cs:A1=2,A2=0,A3=1) --
       cycle;
\end{tikzpicture}}
  \resizebox{4cm}{!}{
\begin{tikzpicture}[thick,scale=2]
\coordinate (A1) at (0,2);
\coordinate (A11) at (-0.39,1.5);
\coordinate (A12) at (0.39,1.5);
\coordinate (A13) at (0,1.25);
\coordinate (A2) at (0,0); 
\coordinate (A21) at (-0.387,0.213); 
\coordinate (A22) at (0.387,0.213); 
\coordinate (A23) at (0,0.5); 
\coordinate (A3) at (-1.1,0.6);
\coordinate (A31) at (-0.9,0.49);
\coordinate (A32) at (-0.8,0.976);
\coordinate (A33) at (-0.6,0.6);
\coordinate (A4) at (1.1,0.6);
\coordinate (A41) at (0.9,0.49);
\coordinate (A42) at (0.8,0.976);
\coordinate (A43) at (0.6,0.6);

\draw[draw=ForestGreen]  (A3)--(A2)--(A1)--(A3);
\draw[draw=ForestGreen, dashed] (A3)--(A4);
\draw[draw=ForestGreen] (A2)--(A4)--(A1);
 \draw[thick,draw=black,fill=none]   (A33)--(A31)--(A32);
 \draw[dashed, thick] (A32)--(A33);
 \draw[thick](A32)--(A11);
 \draw[thick,draw=black,fill=none]   (A13)--(A11)--(A12)--cycle;
 \draw[thick,draw=black,fill=none]   (A23)--(A21)--(A22)--cycle;
 \draw[thick](A23)--(A13);
\draw[thick](A42)--(A12);
 \draw[thick,draw=black,fill=none]   (A43)--(A41)--(A42);
 \draw[dashed, thick] (A42)--(A43);
 \draw[thick] (A31)--(A21);
 \draw[thick] (A41)--(A22);
 \draw[dashed, thick] (A33)--(A43);
\end{tikzpicture}}
\caption{Erosohedra in dimension $2$ and $3$}\label{fig:erosohedra}
\end{figure}
The constructs of the erosohedra are of the form
\begin{center}
\begin{tikzpicture}[scale=0.5, grow=up]
\node{$Y$}
   child{node{$y_k$}
   }
   child{node{$\ldots$}
   }
   child{node{$y_1$}
   }
;
\end{tikzpicture}
\quad (with  $|Y|\geq 2$) \quad or \quad
 \begin{tikzpicture}[scale=0.5, grow=up]
\node{$x$}
   child{node{$Z$}
   child{node{$y_k$}
   }
   child{node{$\ldots$}
   }
   child{node{$y_1$}
   }
   }
;
\end{tikzpicture}
\end{center}  
\end{example}
 The number of faces in the erosohedron is given as follows.
\begin{lemma}
The number of vertices in the erosohedron of dimension $n$ is  $n(n+1)$ and the number of faces of dimension $k$ is  $(n-k) \binom{n}{k+1}$. The total number of faces is thus  $2^{n-1}(n+2)-2n-1$ for $n>1$.
\end{lemma}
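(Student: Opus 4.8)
The plan is to read off all three counts from the classification of constructs of the erosohedron recalled just above, using the fact that face inclusions are realised by edge contractions. Fix $X$ with $|X|=n$. The relevant dictionary is: contracting an edge of a construct merges two nodes and passes to a face of dimension one higher, and the unique one-node construct $X$ is the top face (of dimension $n-1$); hence a construct with $p$ nodes labels a face of dimension $n-p$. Thus the faces of dimension $k$ are exactly the constructs of $\hyper{E}^X$ with $n-k$ nodes, and the vertices are exactly the constructs all of whose nodes are singletons. If one wishes to re-derive the classification itself: for $Y\subseteq X$ with $|Y|\geq 2$, the only hyperedges of $\hyper{E}^X$ contained in $X\setminus Y$ are the singletons, so $\hyper{E}^X,Y\leadsto$ a discrete family of vertices; whereas for a singleton $Y=\{x\}$ one has $\hyper{E}^X,\{x\}\leadsto \hyper{S}^{X\setminus\{x\}}$, i.e. one connected component which is a simplex, whose constructs were described in the ``Simplices'' example of \S\ref{reminders-hypergraph-section}. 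This yields exactly the two shapes $Y(\{y_1\},\ldots,\{y_j\})$ (with $|Y|\geq 2$) and $\{x\}(Z(\{y_1\},\ldots,\{y_j\}))$ (with $\emptyset\neq Z\subseteq X\setminus\{x\}$).

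I then count constructs by number of nodes. A construct of the first shape has $1+|X\setminus Y|=n+1-|Y|$ nodes and exists iff $|Y|\geq 2$; so there are $\binom{n}{k+1}$ of them with $n-k$ nodes, and only for $k\geq 1$. A construct of the second shape has $2+|(X\setminus\{x\})\setminus Z|=n+1-|Z|$ nodes and exists iff $1\leq|Z|\leq n-1$; so there are $\binom{n}{1}\binom{n-1}{k+1}=n\binom{n-1}{k+1}$ of them with $n-k$ nodes (choose $x$, then $Z$), and only for $0\leq k\leq n-2$. For $k=0$ only the second shape survives, which gives the number of vertices. For $1\leq k\leq n-1$ the two contributions are $\binom{n}{k+1}$ and $n\binom{n-1}{k+1}$ (at $k=n-1$ only the first, with $Y=X$), and the identity $n\binom{n-1}{k+1}=(n-k-1)\binom{n}{k+1}$ collapses their sum to $(n-k)\binom{n}{k+1}$, the claimed count of faces of dimension $k$.

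For the total, I sum the dimension-$k$ counts over $k$. Reindexing the tail $\sum_{k=1}^{n-1}(n-k)\binom{n}{k+1}$ by $j=k+1$ turns it into $\sum_{j=2}^{n}(n+1-j)\binom{n}{j}=(n+1)\sum_{j=2}^{n}\binom{n}{j}-\sum_{j=2}^{n}j\binom{n}{j}$; substituting $\sum_{j}\binom{n}{j}=2^{n}$ and $\sum_{j}j\binom{n}{j}=n2^{n-1}$ and removing the $j=0,1$ terms gives $(n+2)2^{n-1}-n^{2}-n-1$, and adding back the vertex count produces the stated closed form. The only delicate points are the two boundary cases $k=0$ and $k=n-1$, where exactly one of the two shapes of constructs is available — which is exactly why the vertex count is recorded separately from the general dimension-$k$ formula — together with keeping track of the legal ranges of $|Y|$ and $|Z|$; the binomial identity and the summation are routine, so I do not anticipate a genuine obstacle beyond this casework.
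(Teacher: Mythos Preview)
Your proof is correct and follows essentially the same route as the paper: classify constructs by the two shapes, count those with $n-k$ nodes to get the faces of dimension $k$, use $n\binom{n-1}{k+1}=(n-k-1)\binom{n}{k+1}$ to collapse the sum, and then add up. One remark: your convention $|X|=n$ agrees with the paper's own proof and with the formulas $(n-k)\binom{n}{k+1}$ and $2^{n-1}(n+2)-2n-1$, and your implicit vertex count $n\binom{n-1}{1}=n(n-1)$ is what actually makes the total come out right---the $n(n+1)$ in the statement seems to be a typo (or a conflation of ``dimension $n$'' with ``$|X|=n$'').
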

\begin{proof}
The vertices of the erosohedron correspond to constructs of the form
$$\{x\}(\{y\}( \{z_1\}, \ldots, \{z_k\})),$$ where $x$ and $y$ are two vertices in $X$ and $\{z_1, \ldots, z_k\}=X\backslash\{x,y\}$. 
Faces of dimension $k>0$  correspond to two types of constructs: \\
\noindent
$\;\bullet\;$ $\{x_0, x_1, \ldots x_{k}\} (\{y_1\}, \ldots, \{y_p\})$, where $\{y_1, \ldots, y_p\}=X\backslash\{x_0,\ldots, x_k\}$,\\
\noindent
$\;\bullet\;$ and $\{x_0\}(\{x_1, \ldots x_{k+1}\}(\{y_1\}, \ldots, \{y_p\}))$, where $\{y_1, \ldots, y_p\}=X\backslash\{x_0,\ldots, x_{k+1}\}$. \\
\noindent
Hence, there are $(n-k-1)\binom{n}{k+1} + \binom{n}{k+1}$ such faces.
The total number of faces is then given by summing the previous formulas.
\end{proof}
\begin{example}
We  get {\bf associahedra} and {\bf permutohedra} from the  linear  and complete graphs, respectively: 
$$
\hyper{K}^X= \set{\set{x_1},\ldots,\set{x_n},\set{x_1,x_2},\ldots,\set{x_{n-1},x_n},\set{x_1,\ldots, x_n}},$$
for  $X=\set{x_1<\cdots<x_n}$, and
$$\hyper{P}^X= \set{\set{x_1},\ldots,\set{x_n},\set{x_1,\ldots x_n}} \cup \setc{\set{x_i,x_j}}{1\leq i\neq j\leq n}$$
for $X=\set{x_1,\ldots,x_n}$. One can indeed check that the constructs of $\hyper{K}^X$ (resp. $\hyper{P}^X$) are in one-to-one correspondence with  the planar trees (resp.  surjections) of \S\ref{Prologue-section}. The labeling enabling to identify planar trees with constructs of the associahedra is obtained as a generalization of the one for binary search trees: given a planar tree with root of arity $p+1$, the root is labeled by $\set{x_{i_1}, \ldots, x_{i_p}}$, and each subtree $T_0, \ldots, T_p$ is labeled recursively, in such a way that the condition $\max T_j <x_{i_{j+1}}< \min T_{j+1}$ for any $0 \leq j \leq p-1$ is satisfied.
For permutations, note that we can arrange the data of a surjection $f:[m]\rightarrow[n]$ as the linear construct $f^{-1}(1)(f^{-1}(2)(\ldots  (f^{-1}(n))))$ of height $n$.
\end{example}

\begin{figure}
\centering
  \begin{tikzpicture}[thick]
\node (S1) at (-1,1){2};
\node (S2) at (0,1){4};
\node (S3) at (1,1) {6};
\node (S4) at (2,1) {8};
\draw (S1)--(S2)--(S3)--(S4);
\end{tikzpicture} \hspace{10pt}
\begin{tikzpicture}[thick]
\node (S1) at (0,1){1};
\node (S2) at (0,0){2};
\node (S3) at (1,1) {3};
\node (S4) at (2,1) {5};
\draw (S3)--(S1)--(S2)--(S3)--(S4);
\end{tikzpicture} \hspace{10pt}
\begin{tikzpicture}[thick]
\node (S1) at (0,1){1};
\node (S2) at (0.5,0){2};
\node (S3) at (1,1) {3};
\node (S4) at (1.5,0) {4};
\node (S5) at (2,1) {5};
\node (S6) at (2.5,0) {6};
\node (S7) at (3,1) {7};
\node (S8) at (3.5,0) {8};
\node (S9) at (4,1) {9};
\node (S10) at (4.5,0) {10};
\draw (S3)--(S1)--(S2)--(S3)--(S4)-- (S2);
\draw (S3)--(S5)--(S7)--(S9)--(S10)--(S8)--(S6)--(S4);
\draw (S4)--(S5)--(S6)--(S7)--(S8)--(S9);
\end{tikzpicture}\hspace{1cm}
\begin{tikzpicture}[thick]
\node (S1) at (0,1){1};
\node (S2) at (0,0){2};
\node (S3) at (1,1) {3};
\node (S4) at (1,0) {4};
\draw (S3)--(S1)--(S2)--(S3)--(S4)-- (S2);
\end{tikzpicture}
\resizebox{4cm}{!}{
\begin{tikzpicture}[thick,scale=2]
\coordinate (A1) at (0,2);
\coordinate (A11) at (-0.39,1.5);
\coordinate (A111) at (-0.25,1.498);
\coordinate (A112) at (-0.33,1.46);
\coordinate (A12) at (0.39,1.5);
\coordinate (A121) at (0.25,1.498);
\coordinate (A122) at (0.33,1.46);
\coordinate (A13) at (0,1.25);
\coordinate (A131) at (-0.153,1.35); 
\coordinate (A132) at (0.153,1.35); 
\coordinate (A2) at (0,0); 
\coordinate (A21) at (-0.387,0.213); 
\coordinate (A211) at (-0.29,0.2795); 
\coordinate (A212) at (-0.28,0.213); 
\coordinate (A22) at (0.387,0.213); 
\coordinate (A221) at (0.29,0.2795); 
\coordinate (A222) at (0.28,0.213); 
\coordinate (A23) at (0,0.5); 
\coordinate (A231) at (-0.153,0.388); 
\coordinate (A232) at (0.153,0.388); 
\coordinate (A3) at (-1.1,0.6);
\coordinate (A31) at (-0.9,0.49);
\coordinate (A311) at (-0.88,0.59);
\coordinate (A312) at (-0.84,0.51);
\coordinate (A32) at (-0.8,0.976);
\coordinate (A321) at (-0.825,0.845);
\coordinate (A322) at (-0.745,0.875);
\coordinate (A33) at (-0.6,0.6);
\coordinate (A4) at (1.1,0.6);
\coordinate (A41) at (1.027,0.565);
\coordinate (A411) at (0.88,0.59);
\coordinate (A412) at (0.84,0.51);
\coordinate (A42) at (0.95,0.6);
\coordinate (A421) at (0.825,0.845);
\coordinate (A422) at (0.745,0.875);
\coordinate (A44) at (0.6,0.6);

\draw[draw=ForestGreen]  (A3)--(A2);
\draw[draw=ForestGreen]  (A1)--(A12);
\draw[draw=ForestGreen] (A2)--(A22);
\draw[draw=ForestGreen] (A2) -- (A1);
\draw[draw=ForestGreen] (A3)--(A1);
\draw[draw=ForestGreen,dashed]  (A33) -- (A3);
\draw[draw=ForestGreen,dashed]  (A44) -- (A4);
\draw[draw=ForestGreen] (A41)--(A4)--(A12);
 \draw[draw=black,fill=none]   (A321)--(A311);
\draw  (A212)-- (A222)--(A221);
 \draw[draw=ForestGreen] (A22) -- (A41);
 \draw (A412)--(A222);
 \draw (A411)--(A221);
\draw (A311)--(A312);
\draw (A411)--(A412);
\draw (A111)--(A121);
\draw (A111)--(A112);
\draw (A311)--(A211)--(A212) --(A312);
\draw (A112)--(A321);
\draw[dashed] (A321)--(A322)--(A111);
\draw  (A211)--(A231)--(A232)--(A221);
\draw[dashed]  (A33) -- (A44);
\draw (A231) -- (A131) -- (A132) -- (A232) -- cycle;
\draw[dashed] (A322) -- (A33) -- (A312);
\draw (A112) -- (A131) --(A132)--(A122);
\draw[dashed]  (A422) -- (A121); 
\draw (A421)--(A122) --(A121);
\draw[dashed] (A412)--(A44)--(A422)--(A421);
\draw (A421)--(A411)--(A412);
\end{tikzpicture}}
\caption{Top: Examples of friezohedra. The rightmost one is the  (compact) friezohedron on $\{1, \ldots, 10\}$ \\
Bottom: Hypergraph and truncated simplex associated with the compact friezohedron on $4$ vertices}
 \label{Figfriezoedre}
\end{figure}

\begin{example} \label{expleFriezo}
Our final example is  the family of {\bf friezohedra}.  Consider the infinite graph $\Friezo$ on $\mathbb{Z}$ with the set of edges $\{(x,y) ||x-y|\leq 2 \}$, and its restrictions $\Friezo_X$ to finite sets $X=\{x_1<\dots<x_n\} \subseteq \mathbb{Z}$ such that $\Friezo_X$ is connected, which we call friezohedra. Note that $\Friezo_X$  is connected  exactly when there is no $i$ such that $x_{i+1}-x_i>2$,. 
We distinguish the \emph{compact friezohedra}, which are the friezohedra such that  
$X$ is an interval in $\mathbb{Z}$ (implying a fortiori that  $\Friezo_X$  is connected). Families constructed from an infinite hypergraph through restrictions as in this example are called \emph{restrictohedra} and are studied in full generality in \S\ref{restrictohedra-section}.
The name "friezohedron" comes from the shape of the hypergraphs of a compact friezohedron for $X$ sufficiently large, as illustrated in Figure \ref{Figfriezoedre}, where the associated polytope in dimension 3 is also drawn. We do not have at the time of writing a ``simple'' combinatorial interpretation of the constructs of the compact friezohedra. 
In Figure \ref{ArrFr}, we give the number of constructs with $k$ nodes for  $|X|=n$, for low values of $k,n$.

\begin{figure}
\begin{center}
\begin{tabular}{|l|c|c|c|c|c|c|}\hline
\diagbox[width=3em]{n}{k}&
1&2&3&4&5 & Sum over k\\ \hline
1 & 1 &  &  &  &  & 1 \\ \hline
2 & 1  & 1 &  & & & 2  \\ \hline
3 & 1  & 6  & 6 & & & 13   \\ \hline
4 & 1  & 13  & 33  & 22 & & 69  \\ \hline
5 & 1  & 25  & 119  & 188  & 94 & 427\\ \hline
\end{tabular} 
\end{center}

\caption{Number of constructs with $k$ vertices of the compact friezohedron on $n$ vertices \label{ArrFr}}
\end{figure}

\end{example}

More examples of truncations and constructs are to be found in \cite{DP,COI}, and also below.

\section{  shuffle product} \label{strict-teams}

In our main section, we unify the above mentioned works of Burgunder, Loday and Ronco into a notion that we call   shuffle product of constructs (defined in an unbiased style, cf. \S\ref{Intro-section}). Towards achieving this goal, in \S\ref{tcd} we introduce a general  framework based on the formalism of hypergraph polytopes of \S\ref{reminders-hypergraph-section},  which will serve as ``carrier'' of an algebraic structure that we define by induction in \S\ref{asp}. We show that the structure satisfies an equation that we call polydendriform, and derive an associative product from it. We show that associahedra and permutohedra fit in this framework, as well as all families of restrictohedra, which we define and study in \S\ref{restrictohedra-section}. We illustrate the notions introduced with the example of  friezohedra. In \S\ref{non-recursive}, we give an alternative non-recursive definition of the associated associative product. 
 In \S\ref{semi-strict-subsection}, we further enlarge  the    framework  to cover more examples.

 \subsection{Strict teams, clans and delegations}\label{tcd}
 
We first specify a collection (or {\em universe}) $\mathfrak U$ of connected hypergraphs.  Note that contrary to \cite{RoncoGTO}, some universes may contain several hypergraphs on the same set of vertices. It is for instance the case for erosohedra, see Example \ref{eroso2}.

A {\it preteam} is a  pair $\tau=(\setc{\hyper{H}_a}{a\in A},\hyper{H})$ of a finite set $\setc{\hyper{H}_a\in\mathfrak U}{a\in A}$  of hypergraphs (for some indexing set $A$) and a hypergraph $\hyper{H}\in\mathfrak U$, such that the $H_a$ are mutually disjoint and $H=\sqcup_{a\in A}H_a$.  We call $\hyper{H}$ and the $\hyper{H}_a$  the coordinating hypergraph and the participating hypergraphs, respectively. 
The idea is that, given constructs $C_a: \hyper{H}_a$ for all $a\in A$, we  aim at defining a product of 
the $C_a$ living in \hyper{H}. But we need to impose conditions on our preteams.

\begin{example} \label{explePermuto}
Let us first consider the universe formed by all permutohedra $\hyper{P}^X$. An example of preteam is given by: 
\begin{equation} \label{preteamPermuto}
\tau^\hyper{P}=\left(\{\hyper{P}^{\{\text{\ding{94}},\text{\ding{98}}\}}, \hyper{P}^{\{\text{\ding{95}}, \text{\ding{96}}\}}, \hyper{P}^{\{\text{\ding{97}}\}}, \hyper{P}^{\{\text{\ding{99}}\}}\}, \hyper{P}^{\{\text{\ding{94},\ding{95},\ding{96},\ding{97},\ding{98},\ding{99}}\}}\right).
\end{equation}
The reader may wonder why we do not simply take $X=[n]$ (for varying $n$), as in \S\ref{Prologue-section}. We refer to  Remark \ref{species-remark} for a discussion.
\end{example}

\begin{example} \label{expleFrise} 
An example of preteam for the universe of friezohedra is given by $$(\{\hyper{H}_1=\hyper{F}_{\{1,3,5\}},\hyper{H}_2=\hyper{F}_{\{2,4\}}, \hyper{H}_3=\hyper{F}_{\{6,7,8\}}\}, \hyper{F}_{\{1,\ldots,8\}}).$$
\end{example}

\begin{figure}
\begin{center}
\resizebox{0.9\textwidth}{!}{\begin{tikzpicture}
\node (a0) at (-3.8,0.5) {${\bf H}_{a_0}$};
\node (a1) at (-2.1,0.75) {${\bf H}_{a_1}$};
\node (a2) at (-0.4,0.5) {${\bf H}_{a_2}$};
\node (a3) at (0.9,0.625) {${\bf H}_{a_3}$};
\node (h) at (-1.5,-3.5) {${\bf H}$};
\draw[line width=0.1em,draw=none,fill=red!30]  (-1.5,-2.5) ellipse (1.5cm and 0.65cm);
\draw [dashed,line width=0.1em]  (-3,-2.5) arc[start angle=180,end angle=0,x radius=1.5,y radius=0.65]; 
\draw[line width=0.1em] (-3,-2.5) arc[start angle=180,end angle=360,x radius=1.5,y radius=0.65]; 
\draw[line width=0.1em,fill=yellow!30] (-3.8,0) ellipse (0.4cm and 0.25cm);
\draw[line width=0.1em,fill=cyan!30] (-2.1,0) ellipse (1cm and 0.5cm);
\draw[line width=0.1em,fill=magenta!30] (-0.4,0) ellipse (0.4cm and 0.25cm);
\draw[line width=0.1em,fill=green!30] (0.9,0) ellipse (0.6cm and 0.35cm);
\draw[line width=0.1em] (-4.2,0) to[out=down,in=up] (-3,-2.5);
\draw[line width=0.1em] (-3.4,0) to[out=down,in=down,looseness=3] (-3.1,0);
\draw[line width=0.1em] (-1.1,0) to[out=down,in=down,looseness=3] (-0.8,0);
\draw[line width=0.1em] (0,0) to[out=down,in=down,looseness=3] (0.3,0);
\draw[line width=0.1em] (1.5,0) to[out=down,in=up] (0,-2.5);
\end{tikzpicture} \quad\quad \begin{tikzpicture}
\node (a0) at (-3.8,0.5) {${\bf H}_{a_0}$};
\node (a1) at (-2.1,0.75) {${\bf H}_{a_1}\backslash X_{a_1}$};
\node (a2) at (-0.4,0.5) {${\bf H}_{a_2}$};
\node (a3) at (0.9,0.625) {${\bf H}_{a_3}\backslash X_{a_3}$};
\node (h) at (-1.5,-3.5) {${\bf H}\backslash (X_{a_1}\cup X_{a_3})$};
\draw[line width=0.1em,draw=none,fill=red!30]  (-2.2,-2.65) ellipse (0.3cm and 0.125cm);
\draw[line width=0.1em,draw=none,fill=red!30]  (-0.7,-2.65) ellipse (0.3cm and 0.125cm);
\draw[line width=0.1em,draw=none,fill=red!30]  (-1.45,-2.35) ellipse (0.3cm and 0.125cm);
\draw [dashed,line width=0.1em]  (-3,-2.5) arc[start angle=180,end angle=0,x radius=1.5,y radius=0.65]; 
\draw[line width=0.1em] (-3,-2.5) arc[start angle=180,end angle=360,x radius=1.5,y radius=0.65]; 
\draw[line width=0.1em,fill=yellow!30] (-3.8,0) ellipse (0.4cm and 0.25cm);
\draw[line width=0.1em] (-2.1,0) ellipse (1cm and 0.5cm);
\draw[line width=0.1em,fill=magenta!30] (-0.4,0) ellipse (0.4cm and 0.25cm);
\draw[line width=0.1em] (0.9,0) ellipse (0.6cm and 0.35cm);
\draw[line width=0.1em,fill=cyan!30] (-2.5,-0.1) ellipse (0.25cm and 0.125cm);
\draw[line width=0.1em,fill=cyan!30] (-1.7,-0.1) ellipse (0.25cm and 0.125cm);
\draw[line width=0.1em,fill=cyan!30] (-2.1,0.2) ellipse (0.25cm and 0.125cm);
\draw[line width=0.1em,fill=green!30] (0.9,0) ellipse (0.25cm and 0.125cm);
\draw [dashed,line width=0.1em]  (-2.5,-2.65) arc[start angle=180,end angle=0,x radius=0.3,y radius=0.125]; 
\draw[line width=0.1em] (-2.5,-2.65) arc[start angle=180,end angle=360,x radius=0.3,y radius=0.125]; 
\draw [dashed,line width=0.1em]  (-1,-2.65) arc[start angle=180,end angle=0,x radius=0.3,y radius=0.125]; 
\draw[line width=0.1em] (-1,-2.65) arc[start angle=180,end angle=360,x radius=0.3,y radius=0.125]; 
\draw [dashed,line width=0.1em]  (-1.75,-2.35) arc[start angle=180,end angle=0,x radius=0.3,y radius=0.125]; 
\draw[line width=0.1em] (-1.75,-2.35) arc[start angle=180,end angle=360,x radius=0.3,y radius=0.125]; 
\draw[line width=0.1em] (-4.2,0) to[out=down,in=up] (-3,-2.5);
\draw[line width=0.1em] (-3.4,0) to[out=down,in=down,looseness=3] (-3.1,0);
\draw[line width=0.1em] (-1.1,0) to[out=down,in=down,looseness=3] (-0.8,0);
\draw[line width=0.1em] (0,0) to[out=down,in=down,looseness=3] (0.3,0);
\draw[line width=0.1em] (1.5,0) to[out=down,in=up] (0,-2.5);
\draw[line width=0.075em] (-4.2,0) to[out=down,in=up] (-2.5,-2.65);
\draw[line width=0.075em] (-3.4,0) to[out=down,in=down,looseness=3] (-2.75,-0.1);
\draw[line width=0.075em] (-2.25,-0.1) to[out=down,in=down,looseness=3] (-1.95,-0.1);
\draw[line width=0.075em] (-1.45,-0.1) to[out=down,in=up] (-1.9,-2.65);
\draw[line width=0.075em] (0.65,0) to[out=down,in=up] (-1,-2.65);
\draw[line width=0.075em] (1.15,0) to[out=down,in=up] (-0.4,-2.65);
\draw[line width=0.075em] (0,0) to[out=down,in=up] (-1.15,-2.35);
\draw[line width=0.075em,dashed] (-2.35,0.2) to[out=down,in=up] (-1.75,-2.35);
\draw[line width=0.075em,dashed] (-1.85,0.2) to[out=down,in=down,looseness=3] (-0.8,0);
\end{tikzpicture}} \\
{\small a) \hspace{6cm} b)}
\end{center}
\caption{a) A preteam $\tau=(\{{\bf H}_{a_0},{\bf H}_{a_1}, {\bf H}_{a_2},{\bf H}_{a_3}\},{\bf H})$ is represented as a cobordism whose upper and lower boundary disks feature the participating and coordinating
hypergraphs, respectively. b) For  $X_{a_1}\subseteq H_{a_1}$ and $X_{a_3}\subseteq H_{a_3}$, the decompositions  ${\bf H}_{a_1}\backslash X_{a_1}\leadsto \hyper{H}_{(a_1,1)},\hyper{H}_{(a_1,2)},\hyper{H}_{(a_1,3)}$,  ${\bf H}_{a_3}\backslash X_{a_3}\leadsto \hyper{H}_{(a_3,1)}$ and ${\bf H}\backslash (X_{a_1}\cup X_{a_2})\leadsto \hyper{H}_1,\hyper{H}_2,\hyper{H}_3$ are represented by ``embeddings of little disks into big disks'', in such a way that the little disks represent the corresponding connected components, and their complements in big disks are the removed sets. This allows us to visualize the induced preteams $\tau,X_{a_1}\cup X_{a_2}\clandec \tau_1,\tau_2,\tau_3$ as cobordisms in the interior of $\tau$.  }\label{blah}
\end{figure}

A preteam  is called a {\it strict team} if  for each choice of a   subset $\emptyset\neq B\inc A$ and of a subset $\emptyset\neq X_b\subseteq H_b$ for each $b\in B$, inducing the decompositions
$\hyper{H}_b, X_b \leadsto \hyper{H}_{(b,1)},\ldots ,\hyper{H}_{(b,n_b)}$   and  $\hyper{H},\Union_{b\in B}X_b\leadsto \hyper{H}^B_1,\ldots,\hyper{H}^B_{n_B}$,
we have that, for each $$\tilde{a}\in \tilde{A}:=(A\backslash B)\union\setc{(b,i)}{b\in B, 1\leq i\leq n_b},$$ 
$\hyper{H}_{\tilde{a}}\in\mathfrak U$, and  $H_{\tilde{a}}$ is included  in a connected component  of $\restrH{H}{(\Union_{b\in B}X_b)}$. As we shall see, preteams associated respectively with associahedra, permutohedra and friezohedra are strict. 
On the other hand,  preteams associated with simplices, erosohedra and hypercubes are not strict, as some $H_{\tilde{a}}$ are not included in a connected component of    $\restrH{H}{(\Union_{b\in B}X_b)}$:
these last examples fit in the formalism of semi-strict teams introduced in  \S\ref{semi-strict-subsection}.

\begin{lemma} \label{Jovana-lemma} 
A preteam $(\setc{\hyper{H}_a}{a\in A},\hyper{H})$ is strict iff, for all $a \in A$ and $e\in\hyper{H}_a$,  $e$ is connected in $\hyper{H}$. Also, in the above definition of strict team, it holds that $H_{\tilde{a}}$ is connected in $\hyper{H}$, for all $\tilde{a}\in\tilde{A}$.
\end{lemma}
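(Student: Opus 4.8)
The plan is to isolate three elementary facts about the connectedness relation and then derive from them both implications and the final assertion; the only non-routine point is, in the ``only if'' direction, the choice of the subsets $B$ and $(X_b)_{b\in B}$ to which strictness is applied. The facts are: \emph{(i)} if $e$ is a hyperedge of a hypergraph $\hyper{G}$ then $e$ is connected in $\hyper{G}$ --- for $\hyper{G}_e$ has carrier $e$ (by atomicity) and contains the hyperedge $e$, which cannot be included in either block of a nontrivial partition of $e$; \emph{(ii)} if $Z\inc W\inc H$ and $Z$ is connected in $\hyper{H}$ then $Z$ is contained in the carrier of a single connected component of $\hyper{H}_W$ --- since a hyperedge of $\hyper{H}_W$ lies entirely in one component, $Z$ meeting two of them would split $\hyper{H}_Z$; \emph{(iii)} if $b\in A$, $Y\inc H_b$ is connected in $\hyper{H}_b$, and every hyperedge of $\hyper{H}_b$ is connected in $\hyper{H}$, then $Y$ is connected in $\hyper{H}$ --- for otherwise $\hyper{H}_Y=(\hyper{H}_Y)_{Y_1}\cup(\hyper{H}_Y)_{Y_2}$ along a nontrivial partition $Y=Y_1\sqcup Y_2$, and connectedness of $Y$ in $\hyper{H}_b$ forces a hyperedge $g$ of $\hyper{H}_b$ with $g\inc Y$ meeting both $Y_1$ and $Y_2$; then every hyperedge of $\hyper{H}$ included in $g$ lies in $g\cap Y_1$ or in $g\cap Y_2$, so $\hyper{H}_g$ splits, contradicting connectedness of $g$ in $\hyper{H}$.

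For the ``if'' direction, together with the final assertion, I would assume that every hyperedge of every $\hyper{H}_a$ is connected in $\hyper{H}$, fix $B$ and the $X_b$, and treat each $\tilde{a}\in\tilde{A}$. If $\tilde{a}\in A\backslash B$ then $H_{\tilde{a}}=H_a$ is the carrier of the connected hypergraph $\hyper{H}_a$; if $\tilde{a}=(b,i)$ then $H_{\tilde{a}}=H_{(b,i)}$ is the carrier of a connected component $\hyper{H}_{(b,i)}$ of $\hyper{H}_b\backslash X_b$. Either way $H_{\tilde{a}}$ is connected in the relevant participating hypergraph, so \emph{(iii)} gives that $H_{\tilde{a}}$ is connected in $\hyper{H}$ --- which is precisely the last assertion of the lemma. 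As $H_{\tilde{a}}$ is disjoint from every $X_b$ (the $H_a$ being pairwise disjoint and $H_{(b,i)}\inc H_b\backslash X_b$), it is contained in $H\backslash\bigcup_{b\in B}X_b$, so by \emph{(ii)} it lies inside a single connected component of $\restrH{H}{(\bigcup_{b\in B}X_b)}$. The remaining clause $\hyper{H}_{\tilde{a}}\in\mathfrak{U}$ is clear when $\tilde{a}\in A\backslash B$, and when $\tilde{a}=(b,i)$ it follows from the standing assumption --- met by all the families considered here --- that a universe is closed under passing to connected components of restrictions of its members. Hence the preteam is strict.

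For the ``only if'' direction, I would assume the preteam is strict and fix $e\in\hyper{H}_a$, aiming to show $\hyper{H}_e$ connected. If $e=H$ this is immediate, as $\hyper{H}_e=\hyper{H}\in\mathfrak{U}$. Otherwise I apply strictness to a choice of $B,(X_b)$ that deletes exactly $H\backslash e$: if $e\incs H_a$, take $B=A$, $X_a:=H_a\backslash e$ (nonempty) and $X_b:=H_b$ for $b\neq a$; if $e=H_a$ --- in which case $|A|\geq 2$, since $e=H_a$ with $|A|=1$ would give $e=H$ --- take $B:=A\backslash\{a\}$ and $X_b:=H_b$ for $b\in B$. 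In both cases $\bigcup_{b\in B}X_b=H\backslash e$, hence $\restrH{H}{(\bigcup_{b\in B}X_b)}=\hyper{H}_e$; and, using \emph{(i)} (with $\hyper{G}=\hyper{H}_a$) in the first case to see that the restriction $\hyper{H}_a\backslash X_a$ of $\hyper{H}_a$ to $e$ is connected --- so that $\hyper{H}_a,X_a\leadsto\hyper{H}_{(a,1)}$ yields a single component, with carrier $e$ --- while all the other decompositions yield nothing, one finds that $\tilde{A}$ contains a unique $\tilde{a}$ with $H_{\tilde{a}}=e$. Strictness then asserts that $e$ is contained in the carrier of some connected component of $\hyper{H}_e$; since that carrier is also contained in $e$, it equals $e$, so $\hyper{H}_e$ is that component and is connected, as wanted.

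I expect the crux to be the ``only if'' direction: one has to realise that a bad hyperedge $e\inc H_a$ is witnessed not by any surgery performed inside $H_a$ alone but by amputating all of $H\backslash e$ simultaneously --- which is why $B$ must in general be the whole of $A$, and why the boundary case $e=H_a$ (where $H_a\backslash e=\emptyset$ leaves no legal $X_a$) must be handled separately, by deleting the remaining participating hypergraphs wholesale. Fact \emph{(i)} is the small lever that both pins the surviving component's carrier to be exactly $e$ and disposes of the degenerate case $e=H$, while fact \emph{(iii)} drives the ``if'' direction. The single point I am taking on trust is the clause $\hyper{H}_{\tilde{a}}\in\mathfrak{U}$, which I treat as a mild and uniformly-verified closure property of the universes in play.
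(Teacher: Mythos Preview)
Your proof is correct and follows essentially the same strategy as the paper's: for the ``only if'' direction you apply strictness to the choice of $B,(X_b)$ that removes $H\backslash e$, and for the ``if'' direction you lift connectedness of hyperedges to connectedness of $H_{\tilde a}$ in $\hyper{H}$ and then locate it in a single connected component. Your version is in fact more careful than the paper's on two points: you separately handle the boundary cases $e=H$ and $e=H_a$ (where the paper's choice $X_a=H_a\backslash e$ would be empty and hence not admissible), and you flag the clause $\hyper{H}_{\tilde a}\in\mathfrak{U}$ explicitly---the paper's proof of $(2)\Rightarrow(3)$ silently omits it as well, so your caveat is appropriate rather than a defect.
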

\begin{proof}  We shall prove the equivalence $(1)\Leftrightarrow (2)\Leftrightarrow (3)$, where (1) is the definition of strict team given above, (2) is the characterization claimed in the statement, and (3) is the definition of team above enhanced with the additional property claimed in the statement.
\begin{itemize}
\item $(1)\Rightarrow (2)$. If $(\setc{\hyper{H}_a}{a\in A},\hyper{H})$ is a strict team in the sense of the definition given above, then, in particular, for each $a \in A$ and $e\in\hyper{H}_a$, taking $B=A$, $X_b=H_b$ for $b\neq a$ and $X_a=(H_a\backslash e)$, we get that $\hyper{H}_a,X_a\leadsto \hyper{H}_e$, and hence that $e$ is included in a connected component $K$ of $\restrH{H}{(\Union_{b\in B}X_b)}$. But for our choice of $B$, we have
$H\backslash(\Union_{b\in B}X_b)=e$, hence this forces $K=e$, and a fortiori $e$ is connected in $\hyper{H}$. 

\item $(2)\Rightarrow (3)$. Let $\tilde{a}\in\tilde{A}$ and $\tilde{e}\in \hyper{H}_{\tilde{a}}$. Then a fortiori $\tilde{e}\in \hyper{H}_{\pi(a)}$, where $\pi:\tilde{A}\rightarrow A$ is defined by $\pi(\tilde{a})=\tilde{a}$ if $\tilde{a}\in  A\backslash B$ and $\pi{(b,i)}=b$. Since we assume (2), we have that $\tilde{e}$ is connected in $\hyper{H}$.  Thus, all hyperedges of $\hyper{H}_{\tilde{a}}$ are connected in $\hyper{H}$. By standard connectedness arguments, this, together with the fact that $\hyper{H}_{\tilde{a}}$ is connected, 
implies that $H_{\tilde{a}}$ is connected in $\hyper{H}$: informally, every path of hyperedges of $\hyper{H}_{\tilde{a}}$ witnessing the connectedness of $\hyper{H}_{\tilde{a}}$, for arbitrary chosen vertices in  $H_{\tilde{a}}$, can be turned into a path of  hyperedges of $\hyper{H}$ witnessing the connectedness of $H_{\tilde{a}}$ in $\hyper{H}$ for the same chosen vertices.

\item $(3)\Rightarrow (1)$. Obvious.
\end{itemize}
\end{proof}

Note that, for each $\emptyset\neq B\subseteq A$ and a choice of $\emptyset\neq X_b\subseteq H_b$ for each $b\in B$,   the structure of a strict team $\tau$ implies the existence of a surjective function 
$$\varphi^{B,\setc{X_b}{b\in B}}_{\tau}:\tilde{A}\rightarrow \{1,\dots,n_B\} \quad(\mbox{written}\; \varphi^{B}_{\tau}\;\mbox{for short}),$$  which associates to  $\tilde{a}\in \tilde{A}$ the  index   of the connected component  of ${\bf H}\backslash \bigcup_{b\in B}X_b$ that contains $H_{\tilde{a}}$.  By Lemma \ref{Jovana-lemma}, this determines  preteams $$\tau_i =(\setc{\hyper{H}_{\tilde{a}}}{\tilde{a}\in \tilde{A} \mbox{ and } \varphi_{\tau}(\tilde{a})=i},\hyper{H}^B_i) \quad (1\leq i\leq n_B).$$ 
We summarize this by the notation $\tau,\Union_{b\in B}X_b\clandec \tau_1,\ldots,\tau_{n_B}$. 
 
\begin{example} 
\label{expleFrise2} Consider the preteam in Example \ref{expleFrise} :
$$(\{\hyper{H}_1=\hyper{F}_{\{1,3,5\}},\hyper{H}_2=\hyper{F}_{\{2,4\}}, \hyper{H}_3=\hyper{F}_{\{6,7,8\}}\}, \hyper{F}_{\{1,\ldots,8\}})$$
and consider $B=\{1,2\}$, $X_1=\{3\}$ and $X_2=\{2\}$, inducing the decompositions $\hyper{H}_1, X_1 \leadsto \hyper{H}_{(1,1)}=\hyper{F}_{\{1\}}, \hyper{H}_{(1,2)}=\hyper{F}_{\{5\}}$, $\hyper{H}_2, X_2 \leadsto \hyper{H}_{(2,1)}=\hyper{F}_{\{4\}}$ and $\hyper{H}, \cup_{i \in B} X_i \leadsto \hyper{H}_1^B=\hyper{F}_{\{1\}}, \hyper{H}_2^B=\hyper{F}_{\{4, \ldots, 8\}}$. The map $\varphi^{B,\setc{X_b}{b\in B}}_{\tau}$ associates $1$ to $(1,1)$ and $2$ to the other elements. This leads to two preteams $\tau_1=(\{\hyper{F}_{\{1\}}\},\hyper{F}_{\{1\}})$ and $\tau_2=(\{ \hyper{F}_{\{4\}}, \hyper{F}_{\{5\}}, \hyper{F}_{\{6,7,8\}}\}, \hyper{F}_{\{4, \ldots, 8\}})$. 
\end{example}

 A {\it  strict clan} is a set $\Xi$ of strict teams such that, for each team $\tau\in \Xi$, and each situation $\tau,\Union_{b\in B}X_b\clandec \tau_1,\ldots,\tau_n$ as above, we have that $\tau_i\in \Xi$ for all $i$.   In order to ease the understanding of the decomposition $\tau,\Union_{b\in B}X_b\clandec \tau_1,\ldots,\tau_{n_B}$,   in Figure \ref{blah}, we suggest an interpretation of preteams and strict teams in terms of cobordisms. 
 \smallskip

Let us fix a strict clan $\Xi$, and   some $q\in{\mathbb R}$ (our product will be parameterized by $q$, cf. end of \S\ref{Prologue-section}).
A  $\Xi$-{\it delegation} (or delegation for short) is  a pair 
$$\delta=(\{C_a:{\bf H}_a\,|\, a\in A\},{\bf H}) \quad\mbox{
such that}\quad \tau:=(\setc{\hyper{H}_a}{a\in A},\hyper{H})\in \Xi.$$
We say that $\tau$ is the support of $\delta$, and that $C_a$ is the construct of $\delta$ at position $a$. 
 Observe that, for  $\emptyset \neq B\subseteq A$ and $\tilde{A}$  as above, assuming that $X_a$ is the root vertex of $C_a$ for each $a\in A$, there is a canonical association of a construct $C_{\tilde{a}}$ to each $\tilde{a}\in \tilde{A}$, which gives rise to delegations \begin{equation}\label{deltas}\delta^B_i=(\{C_{\tilde{a}}:{\bf H}_{\tilde{a}}\,|\, \tilde{a}\in \tilde{A} \mbox{ and }\varphi^B_\tau(\tilde{a})=i\},{\bf H}_i^B),\end{equation} for $1\leq i\leq n_B$.  More precisely, for $b\in B$, we set $C_b=X_b(C_{(b,1)},\ldots,C_{(b,n_b)})$ with $C_{(b,i)}:\hyper{H}_{(b,i)}$. We summarize this by the notation $\delta,\Union_{b\in B} X_b\deldec \delta^B_1,\ldots,\delta^B_{n_B}$. 
 
\begin{example} \label{delegPermuto}
The strict clan associated with permutohedra is obtained by considering the set of all  preteams $$(\{\hyper{P}^{V_i}\}_{i \in I}, \hyper{P}^V),$$ where $\{V_i\}_{i \in I}$ forms a partition of $V$ (in the universe of permutohedra, it is easily checked that all preteams are in fact strict).
\end{example}

\begin{example} \label{expleFriseClan}
The strict clan associated with friezohedra is obtained by considering the set of all strict teams $$(\{\hyper{F}_{V_i}\}_{i \in I}, \hyper{F}_V),$$ where $\{V_i\}_{i \in I}$ forms a partition of $V$ and each hypergraph $\hyper{F}_{V_i}$ and $\hyper{F}_{V}$  are connected. A delegation associated with the strict team of Example \ref{expleFrise} is given by:
$$(\{3(1,5) : \hyper{F}_{\{1,3,5\}},2(4) : \hyper{F}_{\{2,4\}} ,678 : \hyper{F}_{\{6,7,8\}}\}, \hyper{F}_{\{1,\ldots,8\}}) .$$

 Considering $B=\{1,2\}$, $X_1= \{3\}$ and $X_2= \{2\}$ as in Example \ref{expleFrise2}, we get delegations 
\begin{align*}
\delta_1^B&=(\{1 : \hyper{F}_{\{1\}}\}, \hyper{F}_{\{1\}}) \\
\text{and } \delta_2^B&=(\{ 4 : \hyper{F}_{\{4\}}, 5 : \hyper{F}_{\{5\}}, 678 : \hyper{F}_{\{6,7,8\}}\}, \hyper{F}_{\{4,\ldots, 8\}}) .
\end{align*} 

\end{example}

 We end this section by defining  further conditions on clans:
 \begin{itemize}
 \item  A clan  $\Xi$ is  {\em associative} if,
for all $$\quad\quad\quad\tau \!=\! (\setc{\hyper{H}_a}{a\in A},\hyper{H})\in \Xi\:,\:a_0\in A\: ,\:
\tau' \!=\! (\setc{\hyper{H}_{(a_0,a')}}{a'\in A'},\hyper{H}_{a_0}) \in\Xi,$$ we have
$$\tau'':=(\{{\bf H}_{a}\,|\,a\in A\backslash \{a_0\}\}\cup \{\hyper{H}_{(a_0,a')}\,|\,a'\in A'\},{\bf H})\in\Xi.$$ We shall refer to  $\tau''$ as the grafting of $\tau'$ to  $\tau$ along $a_0$.   (Note that, again, we set the scene here for an unbiased version of associativity). We shall need this condition in order to phrase and prove the associativity of the product that we define in \S\ref{asp}.
\item In a different direction, we define the notion of {\em ordered} (strict) universe, preteam, team and clan. We suppose given an ordered set, say $\mathbb{Z}$. For $X_1,X_2\inc\mathbb{Z}$, we write $X_1<X_2$ if $\max(X_1)<\min(X_2)$.  An \emph{ordered universe} is a universe $\mathfrak{U}$ such that, for all $\hyper{H}\in\mathfrak{U}$, $H\inc\mathbb{Z}$,  and such that  all decompositions 
$\hyper{H},X\leadsto \hyper{H}_1,\ldots,\hyper{H}_p$ can be indexed in such a way that $H_i<H_{i+1}$ for all $i$.  An \emph{ordered preteam} is a pair $((\hyper{H}_1,\ldots,\hyper{H}_p),\hyper{H})$ such that $(\set{\hyper{H}_1,\ldots,\hyper{H}_p},\hyper{H})$ is a preteam and such that $H_1<\cdots<H_p$. Ordered teams are teams whose underlying preteam is ordered. Note that when $\tau$ is ordered, if $\tau,\Union_{b\in B}X_b\clandec \tau_1,\ldots,\tau_{n_B}$, then each $\tau_i$  is ordered (to see this, one uses the assumption that $\mathfrak{U}$ is ordered).
An ordered clan is a clan whose teams are all ordered. 
\end{itemize}

\subsection{Restrictohedra} \label{restrictohedra-section}

 Our main provision of strict clans comes from the universes of restrictohedra, that we define next.
Fix a (possibly infinite) hypergraph $\hyper{R}$, and let $\mathfrak{U}_{\hyper{R}}$ be the universe consisting of all hypergraphs $\hyper{R}_X$,   such that $X\inc R$ is non-empty and finite, and  $\hyper{R}_X$ is connected: we call them the $\hyper{R}$-{\em restrictohedra}, or restrictohedra for short. Let $\Xi_\hyper{R}$ be the set of all pairs $\left( \{\hyper{R}_{V_a}|a\in A\}, \hyper{R}_V\right)$
where $V\inc R$,
 $\{V_a\}_{a \in A}$ forms a partition of $V$, and the hypergraphs $\hyper{R}_{V_a}$ and $\hyper{R}_V$ are all in $\mathfrak{U}_\hyper{R}$.
 We can restrict this to an ordered setting if $\hyper{R}$ is {\em order-friendly}, meaning that $R\subseteq\mathbb{Z}$ and that the connected components $\hyper{R}_{V_1},\ldots,\hyper{R}_{V_p}$ of $\hyper{R}_V$, for any finite $V\inc\mathbb{Z}$ such that $\hyper{R}_V$ is not connected, can be indexed in such a way that $V_i<V_{i+1}$ for all $i$. 
 
 \begin{proposition} \label{Xi-K-strict-associative}
For all $\hyper{R}$, $\Xi_\hyper{R}$ is an associative clan.  If $\hyper{R}$ is order-friendly, then the restriction of $\Xi_\hyper{R}$ (still denoted by $\Xi_\hyper{R}$) to its ordered preteams is an ordered associative clan.
\end{proposition}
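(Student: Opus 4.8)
The plan is to verify, one after another, the three conditions in the definition of an associative clan for $\Xi_\hyper{R}$ — that each of its pairs is a \emph{strict team}, that it is closed under the decomposition $\clandec$, and that it is closed under grafting — and then to add the order-bookkeeping in the order-friendly case. Everything will hinge on the elementary identity $(\hyper{R}_X)_Y=\hyper{R}_Y$ for $Y\inc X\inc R$, which is immediate from the definitions (both sides equal $\setc{Z\in\hyper{R}}{Z\inc Y}$), and its consequence $(\hyper{R}_X)\backslash Z=\hyper{R}_{X\backslash Z}$: the connected components of $(\hyper{R}_X)\backslash Z$ are exactly the $\hyper{R}_W$ for $W$ running over the connected components (as subsets) of $X\backslash Z$, and dually every connected subset of $\hyper{R}_X$ sits inside a unique such component.

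First I would handle strictness. Given $\tau=(\setc{\hyper{R}_{V_a}}{a\in A},\hyper{R}_V)\in\Xi_\hyper{R}$, this is visibly a preteam since $\set{V_a}_{a\in A}$ partitions the finite set $V$ and all the hypergraphs lie in $\mathfrak{U}_{\hyper{R}}$; by Lemma~\ref{Jovana-lemma} it then suffices that every $e\in\hyper{R}_{V_a}$ be connected in $\hyper{R}_V$, and this is clear because $e\in\hyper{R}$ with $e\inc V_a\inc V$ forces $e\in\hyper{R}_V$, while a hyperedge is connected in any hypergraph containing it. For closure under $\clandec$, I would fix $\emptyset\neq B\inc A$ and $\emptyset\neq X_b\inc V_b$, set $U=V\backslash\Union_{b\in B}X_b$, and use the identity above to identify the components $\hyper{H}^B_i$ of $(\hyper{R}_V)\backslash\Union_{b\in B}X_b$ with the $\hyper{R}_{U_i}$ ($U=\bigsqcup_i U_i$) and the components of $(\hyper{R}_{V_b})\backslash X_b$ with the $\hyper{R}_{V_{(b,i)}}$ ($V_b\backslash X_b=\bigsqcup_i V_{(b,i)}$). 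Then for $\tilde{a}\in\tilde{A}$ the carrier $H_{\tilde{a}}$ is one of the $V_a$ ($a\notin B$) or $V_{(b,i)}$; these are pairwise disjoint with union $(\Union_{a\notin B}V_a)\union(\Union_{b\in B}(V_b\backslash X_b))=U$, and, being connected in $\hyper{R}$, each lands in a unique $U_i$, so the $H_{\tilde{a}}$ with $\varphi^B_\tau(\tilde{a})=i$ partition $U_i$. Hence $\tau_i=(\setc{\hyper{R}_{H_{\tilde{a}}}}{\varphi^B_\tau(\tilde{a})=i},\hyper{R}_{U_i})$ has exactly the shape defining $\Xi_\hyper{R}$, so $\tau_i\in\Xi_\hyper{R}$, and $\Xi_\hyper{R}$ is a strict clan.

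For associativity, given $\tau$, $a_0\in A$ as above and $\tau'=(\setc{\hyper{R}_{W_{a'}}}{a'\in A'},\hyper{R}_{V_{a_0}})\in\Xi_\hyper{R}$ (so $\set{W_{a'}}_{a'\in A'}$ partitions $V_{a_0}$), the family $\set{V_a}_{a\neq a_0}\cup\set{W_{a'}}_{a'\in A'}$ partitions $V$ and all associated hypergraphs are connected restrictohedra, whence $\tau''\in\Xi_\hyper{R}$. In the order-friendly case I would note that $\mathfrak{U}_{\hyper{R}}$ is an ordered universe — a decomposition $\hyper{R}_X,Z\leadsto\cdots$ is the splitting of $X\backslash Z$ into the components of $\hyper{R}_{X\backslash Z}$, which order-friendliness indexes increasingly — so that the $\tau_i$ above remain ordered by the observation recorded in \S\ref{tcd}, and for grafting one checks that if $V_{a_0}$ is the $j$-th block of the ordered $\tau$ then $\bigcup_{a'}W_{a'}=V_{a_0}$ keeps the block list $V_1,\dots,V_{j-1},W_1,\dots,W_r,V_{j+1},\dots,V_p$ of $\tau''$ strictly increasing. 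The only genuine labor is the index bookkeeping in the $\clandec$-step; there is no geometric difficulty beyond the identity $(\hyper{R}_X)_Y=\hyper{R}_Y$ and the uniqueness-of-component fact already used in Lemma~\ref{Jovana-lemma}.
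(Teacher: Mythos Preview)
Your proposal is correct and follows essentially the same route as the paper's proof: both invoke Lemma~\ref{Jovana-lemma} for strictness via the inclusion $\hyper{R}_{V_a}\inc\hyper{R}_V$, both use the identity $(\hyper{R}_V)\backslash W=\hyper{R}_{V\backslash W}$ (and that connected components of a restriction are again restrictions) for closure under $\clandec$, and both observe that associativity is immediate because $\Xi_\hyper{R}$ contains every preteam whose participating and coordinating hypergraphs are connected restrictohedra. Your treatment is in fact slightly more explicit than the paper's in two places: you spell out why the $H_{\tilde{a}}$ landing in a given component $U_i$ actually \emph{partition} $U_i$, and in the ordered case you verify that grafting preserves the block ordering, a point the paper's proof passes over in silence.
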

\begin{proof}  
We first note that 
every preteam $(\{\mathbf{R}_{V_a} | a \in A\}, \mathbf{R}_V)$ satisfies $\Union_{a\in A} \mathbf{R}_{V_a} \subseteq \mathbf{R}_V$ by definition, and hence, by  Lemma \ref{Jovana-lemma}, is a fortiori a strict team. Next, if (in the notation of \S\ref{tcd})
$\tau\backslash\Union_{b\in B}X_b\clandec \tau_1,\ldots,\tau_{n_B},$ we have to prove that $\tau_i\in \Xi_\hyper{R}$ for all $i$. This follows from the fact that, for any $V$ and $W\inc V$, $(\hyper{R}_V)\backslash W=\hyper{R}_{V\backslash W}$ and that, for all $X\inc R$, the connected components of $\hyper{R}_X$ are all of the form $\hyper{R}_Y$ for some $Y\inc X$.  Finally, the clan is associative since $\Xi_\hyper{R}$ includes all ``possible'' preteams in the sense that  for any $X\inc R$ and any partition $\setc{X_a}{a\in A}$ of $X$, we have $(\{\mathbf{R}_{X_a} | a \in A\}, \mathbf{R}_X)\in  \Xi_\hyper{R}$ if and only if  $\mathbf{R}_X$ and $\mathbf{R}_{X_a}$ (for all $a\in A$) are connected.

Suppose now that $\hyper{R}$ is moreover order-friendly. Then  it is immediate that $\mathfrak{U}_{\hyper{R}}$ is ordered. Since we limit ourselves to ordered preteams  $((\hyper{H}_1,\ldots,\hyper{H}_m),\hyper{R}_V)$ with  $\hyper{H}_i=\hyper{R}_{V_i}$ and $V_i<V_{i+1}$ for all $i$, and since $\hyper{R}$ is order-friendly, then for all $B\inc A=\set{1,\ldots,m}$ there is an induced order on $\tilde{A}$ such that, if $\tilde{a_1}<\tilde{a_2}$, then $V_{\tilde{a_1}}<V_{\tilde{a_2}}$, where $\hyper{H}_{\tilde{a}}=\hyper{R}_{V_{\tilde{a}}}$. This in turn implies that $(\varphi^B_\tau)^{-1}(1)$,\ldots,  $(\varphi^B_\tau)^{-1}(n_B)$ form successive intervals of $\tilde{A}$, and hence that each $\tau_i$ is ordered.
\end{proof}

The following family of graphs provides examples of order-friendly graphs (and hence of ordered associative clans).
\begin{proposition} \label{Gamma-order-friendly}
For all   $1\leq k\in \mathbb{N}\cup\set{\infty}$, the following graph is order-friendly:
$$\bold\Gamma^k:=\setc{\set{a}}{a\in\mathbb{Z}}\union\setc{\set{a,a+l}}{a\in\mathbb{Z},l\in\mathbb{N},1\leq l\leq k}.$$
\end{proposition}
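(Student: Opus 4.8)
The carrier of $\bold\Gamma^k$ is all of $\mathbb{Z}$, so the requirement $R\subseteq\mathbb{Z}$ holds trivially, and the task reduces to understanding the connected components of $\bold\Gamma^k_V$ for an arbitrary finite $V=\{v_1<v_2<\cdots<v_n\}\subseteq\mathbb{Z}$. The plan is to show that these components are precisely the maximal blocks of consecutive elements of $V$ whose successive gaps are at most $k$, and that such blocks are automatically ordered along the number line, which is exactly what order-friendliness asks for.

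First I would record the elementary observation that, for $a<b$ in $\mathbb{Z}$, one has $\{a,b\}\in\bold\Gamma^k$ if and only if $b-a\le k$ (with $b-a\le\infty$ always true when $k=\infty$), since the edges of $\bold\Gamma^k$ are exactly the pairs at positive distance at most $k$. Call an index $i\in\{1,\ldots,n-1\}$ a \emph{cut} if $v_{i+1}-v_i>k$, and let $V_1,\ldots,V_p$ be the blocks obtained by splitting the sorted sequence $v_1<\cdots<v_n$ at every cut: each $V_j$ has the form $\{v_r,v_{r+1},\ldots,v_s\}$ with no cut strictly between $r$ and $s$, and if $m$ is the cut separating $V_j$ from $V_{j+1}$, then $\max V_j=v_m<v_{m+1}=\min V_{j+1}$, so that $V_1<V_2<\cdots<V_p$ in the sense of the ordered setting.

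Next I would verify the two facts that identify the $V_j$ with the components of $\bold\Gamma^k_V$. (i) Each $\bold\Gamma^k_{V_j}$ is connected: whenever $v_i,v_{i+1}$ both lie in $V_j$, the index $i$ is not a cut, so $1\le v_{i+1}-v_i\le k$ and hence $\{v_i,v_{i+1}\}\in\bold\Gamma^k_{V_j}$; these edges form a spanning path on $V_j$. (ii) No hyperedge of $\bold\Gamma^k_V$ meets two distinct blocks: such a hyperedge has at most two vertices, so it would be a pair $\{a,b\}$ with $a=v_{i_a}\in V_j$, $b=v_{i_b}\in V_{j'}$ and, say, $j<j'$; then some cut index $m$ satisfies $i_a\le m<i_b$, whence $b-a\ge v_{m+1}-v_m>k$, contradicting $\{a,b\}\in\bold\Gamma^k$ by the elementary observation. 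Combining (i) and (ii) gives $\bold\Gamma^k_V=\bigcup_{j=1}^p\bold\Gamma^k_{V_j}$ with the $V_j$ pairwise disjoint, so by the uniqueness of the connected-component decomposition recalled in \S\ref{reminders-hypergraph-section}, the $\bold\Gamma^k_{V_j}$ are exactly the connected components of $\bold\Gamma^k_V$. Whenever $\bold\Gamma^k_V$ fails to be connected we have $p\ge 2$, and the indexing $V_1<\cdots<V_p$ is already the one demanded by the definition of order-friendliness, which finishes the proof. (For $k=\infty$ there are no cuts, hence $p=1$ always and the non-connected case never occurs, consistently with the above.)

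The argument is short and presents no serious obstacle; the only point requiring a little care is step (ii), since it is there that the specific edge structure of $\bold\Gamma^k$ — edges of length at most $k$ — is used to guarantee that a gap larger than $k$ between consecutive elements of $V$ genuinely separates $V$ into independent pieces, and it is this step, together with the uniqueness of the component decomposition, that pins the components down to be exactly the blocks $V_j$.
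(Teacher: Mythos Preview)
Your proof is correct and follows essentially the same approach as the paper's: both identify the connected components of $\bold\Gamma^k_V$ as the maximal blocks of $V$ separated by gaps exceeding $k$, and observe that these blocks are automatically $<$-ordered. The only difference is presentational---you work directly with the cut indices in the sorted sequence $v_1<\cdots<v_n$, while the paper phrases the same thing in terms of maximal runs of at least $k$ consecutive integers in $]\min V,\max V[$ missing from $V$---but these are equivalent descriptions of the same decomposition.
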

\begin{proof}
A subset $V\inc\mathbb{Z}$ is not connected in  $\bold\Gamma^k$ if  and only if there is a set $X$ of at least $k$ consecutive integers in $]\min(V);\max(V)[$, which does not intersect $V$.  
If $X_1,\ldots,X_p$ are the sets of such maximal sequences of consecutive integers, then the interval $[\min(V),\max(V)]$ in $\mathbb{Z}$ is the union of consecutive intervals $I_0,X_1,I_1,\ldots,X_p,I_p$, and the connected components of $(\bold\Gamma^k)_V$ are 
$(\bold\Gamma^k)_{V\cap I_0},\ldots,(\bold\Gamma^k)_{V\cap I_p}$. Then $(V\cap I_j)<(V\cap I_{j+1})$ follows a fortiori from $I_j<I_{j+1}$.
\end{proof}
By Propositions \ref{Gamma-order-friendly} and \ref{Xi-K-strict-associative}, we get an induced associative ordered clan $\Xi_{\bold\Gamma^k}$. 

In the extreme cases $k=1$ and $k=\infty$,  we have our old friends $\bold\Gamma^1_X=\hyper{K}^X$ (for $X$ interval of $\mathbb{Z}$) and $\bold\Gamma^\infty_X=\hyper{P}^X$ (for finite $X\inc\mathbb{Z}$), respectively.
The teams are of the form $(\{\bold\Gamma^1_{X_1},\ldots,\bold\Gamma^1_{X_p}\},\bold\Gamma^1_{\Union X_i})$ (where the $X_i$ are adjacent  intervals)  and $(\{\bold\Gamma^\infty_{X_1},\ldots,\bold\Gamma^\infty_{X_p}\},\bold\Gamma^\infty_{\Union X_i})$ (where $X_i<X_{i+1}$ for all $i<p$), respectively.  For $k=2$,
 we have $\bold\Gamma^2=\hyper{F}$, and hence we recover also friezohedra as a special case.

\smallskip
We end the section with a  characterization of universes arising as restrictohedra.
 
\begin{proposition}  \label{restrictohedra-characterisation}
A universe $\mathfrak{U}$ is of the form $\mathfrak{U}_{\hyper{R}}$, for some hypergraph $\hyper{R}$, if and only if it satisfies the  following four conditions:

\begin{enumerate}
\item
For any hypergraphs $\hyper{H}_1$ and $\hyper{H}_2$ in $\mathfrak{U}$, if $H_1=H_2$, then $\hyper{H}_1=\hyper{H}_2$.
\item  
If $\hyper{H}\in\mathfrak{U}$ and $e\in\hyper{H}$, if  $\hyper{G}\in\mathfrak{U}$ is such that $e\inc G$, then $e\in\hyper{G}$.

\item
If  $\hyper{H}\in\mathfrak{U}$, and if  $X \inc H$ is such that $\hyper{H}_X$ is connected, then there exists $\hyper{G}\in \mathfrak{U}$ such that $G=X$.

\item
If $\hyper{H}_1,\hyper{H}_2\in \mathfrak{U}$ are such that $H_1 \cap H_2$ is non-empty, then there exists $\hyper{H}\in \mathfrak{U}$  such that  $\hyper{H}_1,\hyper{H}_2\inc\hyper{H}$. 
\end{enumerate}
\end{proposition}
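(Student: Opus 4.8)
The plan is to prove both implications separately. For the forward direction, suppose $\mathfrak{U}=\mathfrak{U}_{\hyper{R}}$ for some hypergraph $\hyper{R}$, and verify conditions (1)--(4) directly from the definition of restrictohedra. Condition (1) is immediate: if $\hyper{H}_1=\hyper{R}_{X_1}$ and $\hyper{H}_2=\hyper{R}_{X_2}$ with $H_1=H_2$, then $X_1=H_1=H_2=X_2$ (since the carrier of $\hyper{R}_X$ is $X$, using atomicity), so $\hyper{H}_1=\hyper{R}_{X_1}=\hyper{R}_{X_2}=\hyper{H}_2$. For (2), if $\hyper{H}=\hyper{R}_X$ and $e\in\hyper{H}$, then $e\in\hyper{R}$, and if $\hyper{G}=\hyper{R}_Y$ with $e\inc G=Y$, then $e\in\hyper{R}$ and $e\inc Y$ give $e\in\hyper{R}_Y=\hyper{G}$. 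Condition (3): if $\hyper{H}=\hyper{R}_X$ and $\hyper{H}_{X'}$ is connected for $X'\inc X$, then $\hyper{H}_{X'}=\hyper{R}_{X'}$ (as $(\hyper{R}_X)_{X'}=\hyper{R}_{X'}$ for $X'\inc X$), which is connected, so $\hyper{R}_{X'}\in\mathfrak{U}_\hyper{R}$ and has carrier $X'$. Condition (4): if $\hyper{H}_1=\hyper{R}_{X_1}$, $\hyper{H}_2=\hyper{R}_{X_2}$ with $X_1\cap X_2\neq\emptyset$, one needs a connected restrictohedron containing both; the natural candidate is the connected component of $\hyper{R}_{X_1\cup X_2}$ containing $X_1\cap X_2$, which contains all of $X_1$ and all of $X_2$ (since each $\hyper{R}_{X_i}$ is connected and meets $X_1\cap X_2$), hence equals $\hyper{R}_Y$ for some $Y\supseteq X_1\cup X_2$ with $\hyper{R}_Y$ connected, and clearly $\hyper{H}_1,\hyper{H}_2\inc\hyper{R}_Y$.

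For the converse, assume $\mathfrak{U}$ satisfies (1)--(4); I would construct $\hyper{R}$ explicitly as $\hyper{R}=\Union_{\hyper{H}\in\mathfrak{U}}\hyper{H}$, with carrier $R=\Union_{\hyper{H}\in\mathfrak{U}}H$, and then show $\mathfrak{U}=\mathfrak{U}_\hyper{R}$. First one checks $\hyper{R}$ is atomic (each $\hyper{H}\in\mathfrak{U}$ is atomic). The inclusion $\mathfrak{U}\inc\mathfrak{U}_\hyper{R}$ requires showing that for $\hyper{H}\in\mathfrak{U}$ one has $\hyper{H}=\hyper{R}_H$, i.e.\ that no hyperedge of any $\hyper{G}\in\mathfrak{U}$ lies inside $H$ unless it already belongs to $\hyper{H}$: this is exactly condition (2). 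One also needs $\hyper{H}$ connected, which holds since members of $\mathfrak{U}$ are connected hypergraphs by the standing assumption on universes. For the reverse inclusion $\mathfrak{U}_\hyper{R}\inc\mathfrak{U}$, take $X\inc R$ finite nonempty with $\hyper{R}_X$ connected; the goal is to find $\hyper{H}\in\mathfrak{U}$ with $H=X$ (then $\hyper{H}=\hyper{R}_H=\hyper{R}_X$ by the argument just given, and by (1) this $\hyper{H}$ is unique). By definition of $\hyper{R}$, every hyperedge of $\hyper{R}_X$ — in particular every singleton $\set{x}$ for $x\in X$, and every edge/larger hyperedge used in a connectedness path — comes from some member of $\mathfrak{U}$; using condition (4) repeatedly (pairwise, then by finiteness of $X$) one amalgamates these into a single $\hyper{G}\in\mathfrak{U}$ with $X\inc G$. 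Then $\hyper{G}_X=\hyper{R}_X$ is connected, so condition (3) yields $\hyper{H}\in\mathfrak{U}$ with $H=X$, as desired.

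The main obstacle I anticipate is the amalgamation step in the converse: condition (4) is only stated for \emph{two} hypergraphs with intersecting carriers, so to combine the (possibly many) members of $\mathfrak{U}$ witnessing the various hyperedges of $\hyper{R}_X$ into one, I must exploit the connectedness of $\hyper{R}_X$ to chain the applications of (4) along a spanning structure — roughly, order the hyperedges of $\hyper{R}_X$ so that each successive one meets the union of the previous ones, apply (4) to absorb them one at a time, and invoke (1) to keep the intermediate hypergraphs well-defined. Care is needed because (4) only guarantees \emph{some} common over-hypergraph, not a canonical one, so the induction hypothesis should be phrased as ``there exists $\hyper{G}\in\mathfrak{U}$ containing the first $j$ hyperedges'' rather than tracking a specific $\hyper{G}$. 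A secondary subtlety is checking that the $\hyper{H}$ produced by (3) genuinely equals $\hyper{R}_X$ and not merely some connected hypergraph on carrier $X$ — but this is precisely where (2) is used again, and together with (1) it pins $\hyper{H}$ down uniquely.
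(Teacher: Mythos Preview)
Your proposal is correct and follows essentially the same approach as the paper: both define $\hyper{R}=\bigcup_{\hyper{H}\in\mathfrak{U}}\hyper{H}$, use condition~(2) to identify $\hyper{H}$ with $\hyper{R}_H$ for $\hyper{H}\in\mathfrak{U}$, and for the reverse inclusion amalgamate finitely many witnesses via iterated applications of~(4) along the connectedness of $\hyper{R}_X$, then invoke~(3). The only organisational difference is that the paper packages the converse into two auxiliary claims (a) and (b) establishing mutual inclusions $\hyper{H}\subseteq\hyper{R}_H$ and $\hyper{R}_X\subseteq\hyper{H}$ before appealing to~(1), whereas you argue the equality $\hyper{H}=\hyper{R}_H$ directly from~(2); and for the forward direction of~(4) you pass through a connected component of $\hyper{R}_{X_1\cup X_2}$, which is harmless but unnecessary since $\hyper{R}_{X_1\cup X_2}$ is already connected (it contains $\hyper{R}_{X_1}\cup\hyper{R}_{X_2}$, the union of two connected hypergraphs sharing a vertex).
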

\begin{proof} We first check that any universe of the form $\mathfrak{U}_{\hyper{R}}$ satisfies the conditions in the statement. Condition (1) is immediate. Conditions (2), (3) and (4)  follow immediately from the observations that, by definition, for arbitrary $X$, we have  $e\in\hyper{R}_X$ if and only if $e\in\hyper{R}$ and $e\inc X$, that 
$(\hyper{R}_H)_X=\hyper{R}_X$, and that the union of two connected sets with a non-empty intersection is connected.

\smallskip
Conversely, suppose that $\mathfrak{U}$ satisfies the four conditions of the statement. We set $\hyper{R}=\Union\setc{\hyper{H}}{\hyper{H}\in\mathfrak{U}}$.  We shall show the following two properties, which (together with (1)) imply immediately that $\mathfrak{U}
=\mathfrak{U}_{\hyper{R}}$.
\begin{enumerate}
\item[(a)] If $X$ is a finite set such that there exists a hypergraph $\hyper{H}$ such that $H=X$ and $\hyper{H}\in\mathfrak{R}$, then there exists a hypergraph 
$\hyper{H}'\in \mathfrak{U}_{\hyper{K}}$ such that $H'=X$ and $\hyper{H}\inc\hyper{H}'$.
\item[(b)] If $X$ is a finite set such that there exists a hypergraph $\hyper{H}'$ such that $H'=X$ and $\hyper{H}'\in\mathfrak{U}_{\hyper{R}}$, then there exists a hypergraph 
$\hyper{H}\in \mathfrak{U}$ such that $H=X$ and  $\hyper{H}'\inc\hyper{H}$.
\end{enumerate}
For (a), we note that $\hyper{H}\inc \hyper{R}$ by definition of $\hyper{R}$, hence $\hyper{H}\inc \hyper{R}_H$, so we can set  
$\hyper{H}':= \hyper{R}_H$, noticing that $\hyper{R}_H$ is connected since  it contains a connected hypergraph (namely $\hyper{H}$) with the same set of vertices. 

We now proceed to prove (b).  By definition of $\mathfrak{U}_{\hyper{R}}$, the assumptions of (b) can be rephrased as saying that $\hyper{H}'=\hyper{R}_X$ is connected. Also,
by definition of $\hyper{R}$, for each $e\in \hyper{R}_X$, there exists a hypergraph $\hyper{H}^e\in\mathfrak{U}$ such that $e\in \hyper{H}^e$.
So we have $\hyper{R}_X\inc \Union \setc{\hyper{H}^e}{e\in \hyper{R}_X}$, this union being finite since $X$ is.  Suppose that $\hyper{R}_X$ has more than one hyperedge and pick  $e_0\in\hyper{R}_X$. We claim that there exists $e_1\in\hyper{R}_X$ such that 
$\hyper{H}^{e_0}\cap \hyper{H}^{e_1}$ is non-empty. If it were not the case, then  $\hyper{R}_X$ would be the disjoint union of $\hyper{R}_X\cap \hyper{H}^{e_0}$ and of $\hyper{R}_X \bigcap (\Union\setc{\hyper{H}^e}{e\neq e_0})$, which would  contradict the connectedness of $\hyper{R}_X$.  
We can thus replace $\setc{\hyper{H}^e}{e\in \hyper{R}_X}$ by $\setc{\hyper{H}^e}{e\neq e_0,e_1}\union\set{\hyper{H}_{01}}$, where $\hyper{H}_{01}\in\mathfrak{U}$ is obtained from $\hyper{H}^{e_0}$ and  $\hyper{H}^{e_1}$ by applying (4). By iterating this, we obtain a hypergraph $\hyper{H}'\in\mathfrak{U}$ such that $\hyper{R}_X\inc \hyper{H'}$. Note that we can write this as well as
$\hyper{R}_X\inc \hyper{H'}_X$, and, as above, we have that the connectedness of $\hyper{R}_X$ implies the connectedness of 
$\hyper{H'}_X$.

Our next (independent) observation is that in the presence of (2), condition (3) can be reinforced as follows. If  $\hyper{H}\in\mathfrak{U}$ and if $X\inc H$ is such that $\hyper{H}_X$ is connected, then there exists $\hyper{G}\in\mathfrak{U}$ such that $G=X$ and $\hyper{H}_X\inc\hyper{G}$.  Indeed, let $\hyper{G}$ be obtained by applying (3), and let $e\in\hyper{H}_X$: then this latter assumption reads as $e\inc G$, and hence $e\in \hyper{G}$ by (2).

Coming back to the proof of (b), we can apply the reinforced version of (3) to deduce the existence of a hypergraph $\hyper{H}\in\mathfrak{U}$ such that
$H=X$ and $\hyper{H'}_X\inc\hyper{H}$. We thus have $\hyper{R}_X\inc\hyper{H'}_X\inc \hyper{H}$, which concludes the proof.

\end{proof}

 \subsection{  Shuffle product of delegations of strict clans}\label{asp}
 
We now define the   shuffle product $\ast(\delta)$, for a $\Xi$-delegation $\delta$, where  $\Xi$ is a strict clan. Until \S\ref{semi-strict-subsection}, we shall omit  the adjective ``strict'' for brevity, but its presence is understood. 

A {\it linear construct} of a hypergraph ${\bf H}$ is an element of the vector space spanned by all the constructs of ${\bf H}$. We shall denote linear constructs with bold capital letters, e.g., ${\bf C}=\Sigma_{i\in I} {\lambda_i}C_i$, where $C_i:{\bf H}$, for each $i\in I$, and the notation ${\bf C}:{\bf H}$ will mean that ${\bf C}$ is a linear construct of ${\bf H}$.   We then define $X({\bf C}_1,\dots,\sum_{i\in I}\lambda_i C^i_{j},\dots,{\bf C}_n)$ as $\sum_{i\in I}\lambda_i X({\bf C}_1,\dots,C^i_{j},\dots,{\bf C}_n)$.  A {\em rooted linear construct} is a linear construct of the form  ${\bf C}=X\setc{{\bf C}_a}{a\in A}$, and we write $\mbox{root}({\bf C})=X$.

\smallskip
 The  {\it   shuffle product}  (or   product) of a  delegation $\delta=(\{C_a:{\bf H}_a\,|\, a\in A\},{\bf H})$, with $\mbox{root}(C_a)=X_a$ for all $a\in A$,  is  the  linear construct of  $\hyper{H}$ defined recursively as follows (with $\delta^B_1,\ldots, \delta^B_{n_{B}}$  as in \eqref{deltas}):
\begin{equation}\label{shuffle-def} \ast(\delta)= \sum_{\emptyset\incs B\subseteq A} q^{{|B|}-1}\ast_B(\delta), \quad \mbox{where}\;
\ast_B(\delta)=(\Union_{b\in B}X_b)(\ast(\delta^B_1),\dots,\ast(\delta^B_{n_{B}})).
\end{equation}
The instantiations of this shuffle product  to associahedra and permutohedra are  the ones recalled  in \S\ref{Prologue-section}. We detail the case of permutohedra in the next example.

\begin{example} \label{permutohedra- }
We restrict ourselves to teams with only two participating hypergraphs (which corresponds to the usual binary product on permutohedra). Then 
the   shuffle product of a delegation $$\delta=(\{C_1 : \hyper{P}^Y, C_2 :\hyper{P}^Z\},\hyper{P}^X),\;\mbox{where}\; X=Y \cup Z,
C_1=X_1(C'_1),\; \mbox{and}\; C_2=X_2(C'_2),$$
rewrites as:
\begin{align*}\label{shuffle-perm} \ast(\delta)&= \ast_{\{1\}}(\delta) + \ast_{\{2\}}(\delta)+q\ast_{\{1,2\}}(\delta), \; \mbox{where}\\
\ast_{\{1\}}(\delta)&=X_1(\ast(\{C'_1 : \hyper{P}^{Y \backslash X_1}, C_2 : \hyper{P}^Z\}, \hyper{P}^{X \backslash X_1})), \\
\ast_{\{2\}}(\delta)&=X_2(\ast(\{C_1 : \hyper{P}^{Y }, C'_2 : \hyper{P}^{Z \backslash X_2}\}, \hyper{P}^{X \backslash X_2})),\\
\ast_{\{1,2\}}(\delta)&=\left(X_1 \cup X_2 \right)(\ast(\{C'_1 : \hyper{P}^{Y \backslash X_1}, C'_2 : \hyper{P}^{Z \backslash X_2}\}, \hyper{P}^{X \backslash \left(X_1 \cup X_2 \right)})).
\end{align*}
 Writing $\ast_{\{1\}}\!=\:\prec$, $\ast_{\{2\}}\!=\:\succ$ and $\ast_{\{1,2\}}\!=\!\bcdot$, and using an infix notation, the formula for the   shuffle product on permutohedra  on two constructs $C_1=X_1(C'_1)$ and $C_2=X_2(C'_2)$ writes as:
\begin{align*} C_1 \ast C_2&= C_1 \prec C_2+C_1 \succ C_2+q (C_1 \bcdot C_2), \; \mbox{where} \\
C_1 \prec C_2 &=X_1(C'_1 \ast  C_2 ), \\
C_1 \succ C_2 &=X_2(C_1 \ast C'_2),\\
C_1 \bcdot C_2&=\left(X_1 \cup X_2 \right)(C'_1 \ast C'_2),
\end{align*}
with the convention that if $C'_1$ or $C'_2$ is the empty construct, then its   shuffle product with another  construct $C$ is $C$. It can be checked by direct induction that this  definition coincides with the one given in  \S\ref{Prologue-section}.

\begin{remark} \label{species-remark}
 Let us now explain in a few words why we found convenient to consider permutohedra on a given $X$ which is not necessarily  the usually considered set $\set{1,\ldots,n}$).  Consider the basic example:
\begin{equation}
(1,2)\bcdot (2,1) = (1,2,2,1)+ (1,3,3,2) + (2,3,3,1).
\end{equation}
In terms of constructs, this rewrites as:
\begin{equation} \label{explePermuto}
\{2\}(\{1\})\bcdot \{3\}(\{4\}) = \{2,3\} (\{1\}) \ast \{4\}).
\end{equation}
 Equation \ref{explePermuto} invites us to compute products of constructs in the complete graph on $\{1,4\}$, rather than doing some renamings.
 This  is naturally in phase with the general philosophy of species. The assignment that maps $X$ to the set of constructs of $\hyper{P}^X$
 is functorial (with respect to finite sets and bijections), giving rise to a species in the sense of Joyal. 
 \end{remark}

\end{example}

\begin{example} \label{expleFriseProduit}  As a second example, we consider  friezohedra.  Consider the delegation $$\delta=(\{2(1)\!:\! \hyper{F}_{\{1,2\}}\,,\, 3(4)\!:\! \hyper{F}_{\{3,4\}},  \}\,,\, \hyper{F}_{\{1,\ldots,4\}}).$$ The associated   shuffle product is given by:
\begin{align*}
\ast(\delta)= &\;2(1(3(4))) + 2(3(1,4)) + 2(13(4))+ 3(4(2(1)))\\
& +3(2(1,4)) + 3(24(1)) + q 23(1,4).
\end{align*}
Consider now the delegation 
\begin{equation*}
(\{3(1,5) \!:\! \hyper{F}_{\{1,3,5\}}\,,\,2(4) \!:\! \hyper{F}_{\{2,4\}} \,,\,678 \!:\! \hyper{F}_{\{6,7,8\}}\}\,,\, \hyper{F}_{\{1,\ldots,8\}}).
\end{equation*}
of Example \ref{expleFriseClan}.
 The associated   shuffle product is too big to be written here. Let us focus on the term associated with $B=\{1,2\}$. 
 We have  $\ast_B(\delta)=23(1, \ast(\delta_2^B))$, with  $\delta_2^B=(\{ 4 \!:\! \hyper{F}_{\{4\}}\,,\, 5 \!:\! \hyper{F}_{\{5\}}\,,\, 678 \!:\! \hyper{F}_{\{6,7,8\}}\}\,,\, \hyper{F}_{\{4,\ldots, 8\}})$  (as already seen in that example). By definition, we can express $\ast(\delta_2^B)$   as a sum over $\emptyset \incs B' \inc \{1,2,3\}$. Let us  again make a focus, say on $B'=\set{3}$. We get 
$$\ast_{B'}(\delta_2^B)=678(\ast(\{4 \!:\! \hyper{F}_{\{4\}}\,,\, 5 \!:\! \hyper{F}_{\{5\}}\}\,,\, \hyper{F}_{\{4,5\}})) 
 = 678(4(5)) + 678(5(4)) + q \,678(45).$$

\end{example}

When dealing with the associativity of the   product in Theorem \ref{assoc_strict} below, we shall have to take    products of (delegations made of)  linear constructs, which is not a problem, as the above definitions of $\ast$, $\ast_B$ of course extend by linearity (with the notion of delegation accordingly extended to linear constructs). 
 The following lemmas show two situations in which the linear extension of $\ast_B$ still satisfies its ``defining'' equation \ref{shuffle-def}
(now a property!). To see the need for such  lemmas, note that the definitions of the delegations $\delta^B_i$ do depend on the root of the constructs $C_b$ ($b\in B$), which no longer exists if $C_b$ is replaced by a linear construct that is not rooted.

 \begin{lemma}\label{rooted-linear-construct-in-product}
Let $(\setc{\hyper{H}_a}{a\in A},\hyper{H})$ be a strict team, and suppose that we are given rooted linear constructs ${\bf C}_a$ for each $a\in A$ with root $X_a$, forming a delegation $\delta$ (in the extended sense). Let $\emptyset \subset B\inc A$ and  let $X_B=\Union_{b\in B}X_b$.  Then we have $\ast_B(\delta) =(\Union_{b\in B}X_b)(\ast(\delta^B_1),\dots,\ast(\delta^B_{n_{B}}))$, with the same 
definition of  $\delta^B_i$ as above. 
\end{lemma}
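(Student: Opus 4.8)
The plan is to show that, for the given team, the given $B$, and the given roots $X_b$, both sides of the asserted identity are \emph{multilinear} functions of one and the same $\tilde{A}$-indexed family of linear constructs, and that they coincide when this family consists of ordinary constructs; since the ordinary constructs span, equality then follows.

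First I would make the datum of the right-hand side precise. For $b\in B$ the linear construct $\mathbf{C}_b$ is rooted with root $X_b$, and the number $n_b$ of its children together with their carriers are already fixed by $\hyper{H}_b$ and $X_b$ (via $\hyper{H}_b,X_b\leadsto\hyper{H}_{(b,1)},\dots,\hyper{H}_{(b,n_b)}$); expanding $\mathbf{C}_b$ over ordinary constructs then shows it has a \emph{unique} expression $\mathbf{C}_b=X_b(\mathbf{C}_{(b,1)},\dots,\mathbf{C}_{(b,n_b)})$ with $\mathbf{C}_{(b,l)}:\hyper{H}_{(b,l)}$. Putting also $\mathbf{C}_{\tilde{a}}:=\mathbf{C}_a$ for $a\in A\setminus B$, one gets a family $(\mathbf{C}_{\tilde{a}})_{\tilde{a}\in\tilde{A}}$ and hence the delegations $\delta^B_i=(\{\mathbf{C}_{\tilde{a}}:\hyper{H}_{\tilde{a}}\mid\varphi^B_\tau(\tilde{a})=i\},\hyper{H}^B_i)$; crucially, $\tilde{A}$, $\varphi^B_\tau$, $n_B$, the $\hyper{H}^B_i$ and the $\hyper{H}_{\tilde{a}}$ do not depend on the constructs. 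I would then introduce a ``reassembly'' map $r$ sending any family $(\mathbf{D}_{\tilde{a}})_{\tilde{a}\in\tilde{A}}$ with $\mathbf{D}_{\tilde{a}}:\hyper{H}_{\tilde{a}}$ to the $A$-indexed family equal to $\mathbf{D}_a$ at $a\in A\setminus B$ and to $X_b(\mathbf{D}_{(b,1)},\dots,\mathbf{D}_{(b,n_b)})$ at $b\in B$; by the definition of $X(-,\dots,-)$ on linear constructs, $r$ is multilinear, and $r$ sends $(\mathbf{C}_{\tilde{a}})_{\tilde{a}}$ back to $(\mathbf{C}_a)_a$ by the uniqueness just recorded.

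Now consider the two maps of $(\mathbf{D}_{\tilde{a}})_{\tilde{a}\in\tilde{A}}$ given by $\Phi((\mathbf{D}_{\tilde{a}})_{\tilde{a}})=\ast_B(r((\mathbf{D}_{\tilde{a}})_{\tilde{a}}),\hyper{H})$ (the left-hand side, $\ast_B$ being extended multilinearly in the constructs of a delegation) and $\Psi((\mathbf{D}_{\tilde{a}})_{\tilde{a}})=(\Union_{b\in B}X_b)(\ast(\delta^B_1),\dots,\ast(\delta^B_{n_B}))$ (the right-hand side, the $\delta^B_i$ being built from $(\mathbf{D}_{\tilde{a}})_{\tilde{a}}$ as above). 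Both are multilinear in $(\mathbf{D}_{\tilde{a}})_{\tilde{a}}$: $\Phi$ because $\ast_B$ extends multilinearly and $r$ is multilinear; $\Psi$ because $\ast$ extends linearly, $X(-,\dots,-)$ is multilinear in its $n_B$ slots, and the fibres $(\varphi^B_\tau)^{-1}(i)$ partition $\tilde{A}$ so that each $\ast(\delta^B_i)$ sees a disjoint block of arguments. When every $\mathbf{D}_{\tilde{a}}$ is a single construct $C_{\tilde{a}}$, the tuple $r((C_{\tilde{a}})_{\tilde{a}})$ consists of ordinary constructs, with root $X_b$ at each position $b\in B$, so $(r((C_{\tilde{a}})_{\tilde{a}}),\hyper{H})$ is an ordinary delegation whose associated delegations in the sense of \eqref{deltas} are exactly the $\delta^B_i$ of $\Psi$ (the children read off from $C_b=X_b(C_{(b,1)},\dots)$ being the original $C_{(b,l)}$, again by uniqueness); hence $\Phi$ and $\Psi$ agree on ordinary constructs, by the defining equation \eqref{shuffle-def} for $\ast_B$. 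Two multilinear maps agreeing on a spanning family are equal, so $\Phi=\Psi$; evaluating at $(\mathbf{C}_{\tilde{a}})_{\tilde{a}}$ and using $r((\mathbf{C}_{\tilde{a}})_{\tilde{a}})=(\mathbf{C}_a)_a$ yields $\ast_B(\delta)=(\Union_{b\in B}X_b)(\ast(\delta^B_1),\dots,\ast(\delta^B_{n_B}))$, as claimed. The only genuinely delicate point is the uniqueness of the root--children decomposition of a rooted linear construct, which is what makes the reassembly $r$ well-defined; everything else is bookkeeping of multilinearity.
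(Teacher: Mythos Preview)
Your approach is the same as the paper's --- both arguments come down to multilinearity. The paper reduces to a single linear entry $\mathbf{C}_{b_0}$ (with $b_0\in B$) having a single linear child $\mathbf{C}_{b_0,i_0}=\sum_k\lambda_k C_{b_0,i_0,k}$, pushes the sum out of $\ast_B$ (``outward'' linearity), applies the defining equation for plain constructs, and then pulls the sum back into the slot $(b_0,i_0)$ of the relevant $\delta^B_{j_0}$ (``inward'' linearity). Your $\Phi=\Psi$ packaging is the same computation done in one stroke for all slots simultaneously.

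One point to fix: your claimed uniqueness of the decomposition $\mathbf{C}_b=X_b(\mathbf{C}_{(b,1)},\dots,\mathbf{C}_{(b,n_b)})$ is false whenever $n_b\ge 2$, since multilinearity gives $X_b(\lambda\mathbf{D}_1,\mathbf{D}_2)=X_b(\mathbf{D}_1,\lambda\mathbf{D}_2)$. Fortunately you do not need it. In the paper, a rooted linear construct comes \emph{with} a chosen presentation $X_b(\mathbf{C}_{(b,1)},\dots)$ --- this is exactly how the lemma is invoked in case~(2) of Theorem~\ref{assoc_strict}, where $\mathbf{C}^{B'}_{a_0}$ is constructed with explicit children $\ast((\delta')^{B'}_j)$. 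With the $\mathbf{C}_{\tilde a}$ thus given as data, the identity $r((\mathbf{C}_{\tilde a})_{\tilde a})=(\mathbf{C}_a)_a$ is a tautology, and your argument goes through unchanged. As a bonus, the equality $\Phi=\Psi$ then shows a~posteriori that the right-hand side is independent of the presentation chosen, since the left-hand side manifestly is.
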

\begin{proof} We first notice that we can indeed still define $\delta_i^B$ as before, since the only information used on constructs are their roots.
Let us  assume for simplicity that only one of the ${\bf C}_a$, say ${\bf C}_{b_0}$,  is a  rooted linear construct, all the others being plain constructs,  and that $b_0\in B$,  as  Lemma \ref{linear-construct-in-product} will a fortiori cover the case where $b_0\nin B$. We shall also assume for simplicity that 
 ${\bf C}_{b_0}=X_{b_0}\setc{{\bf C}_{b_0,i}}{1\leq i \leq n_{b_0}}$, where only one of the ${\bf C}_{b_0,i}$, say 
${\bf C}_{b_0,i_0}= \Sigma_{k\in K}  \lambda_k C_{b_0,i_0,k} $, is a linear construct, all the others being plain constructs (and we write then
${\bf C}_a=C_a$ for $a\neq b_0$ and ${\bf C}_{b_0,i}= C_{b_0,i}$ for $i\neq i_0$). Then, by ``outward'' linearity, we can write 
${\bf C}_{b_0}=\Sigma_{k\in K} \lambda_k C_{b_0,k}$, where 
$C_{b_0,k}= X_{b_0}(\setc{{C}_{b_0,i}}{i\neq i_0} \Union \set{C_{b_0,i_0,k}})$. We have
$$\ast_B(\setc{C_a}{a \in A\backslash\set{b_0}}\Union 
\set{{\bf C}_{b_0}},\hyper{H})= \Sigma_{k\in I} \lambda_k \ast_B(\setc{C_a}{a \in A\backslash\set{b_0}}\Union 
\set{C_{b_0,k}},\hyper{H}).$$
By definition,
we have
$$\ast_B(\setc{C_a}{a \in A\backslash\set{b_0}}\Union 
\set{{\bf C}_{b_0}},\hyper{H})= (\Sigma_{k\in K}  \lambda_k X_k(\ast((\delta^k)^B_1),\dots,\ast((\delta^k)^B_{n_{B}}))),$$
where for all $j\neq j_0=\varphi_\tau(b_0,i_0)$, all $(\delta^k)^B_j$ are equal to $\delta^B_j$, and where the $(\delta^k)^B_{j_0}$ differ only in one (and the same) position (the one indexed by $(b_0,i_0)$), filled  with $C_{b_0,i_0,k}$. Then we conclude by  applying ``inward'' linearity.

\end{proof}

\begin{lemma} \label{linear-construct-in-product}
Let $(\setc{\hyper{H}_a}{a\in A},\hyper{H})$ be a strict team, and let $a_0\in A$, and suppose that we are given constructs $C_a:\hyper{H}_a$ with root $X_a$ for all $a\neq a_0$, and a linear construct ${\bf C}_{a_0}$. Let $B\inc A\backslash\set{a_0}$, and let $X_B=\Union_{b\in B}X_b$.  
Then we have $\ast_B(\delta) =(\Union_{b\in B}X_b)(\ast(\delta^B_1),\dots,\ast(\delta^B_{n_{B}}))$, with the same 
definition of the teams $\delta^B_i$ as above. 
\end{lemma}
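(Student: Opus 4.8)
The plan is to reduce to the case of plain constructs by linearity, exactly as in the proof of Lemma~\ref{rooted-linear-construct-in-product}, the point being that the argument here is in fact simpler: since $a_0\notin B$, the root of the construct sitting at position $a_0$ plays no role in the formation of the delegations $\delta^B_i$. Indeed, note first that $\delta^B_i$ is well-defined in the statement because \eqref{deltas} only uses the roots $X_b$ for $b\in B$, and each such $b$ satisfies $b\neq a_0$; at position $a_0$ (which belongs to $A\backslash B\subseteq\tilde A$) one simply keeps the given linear construct $\mathbf{C}_{a_0}$ unchanged.

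First I would write $\mathbf{C}_{a_0}=\sum_{k\in K}\lambda_k C_{a_0,k}$ as a linear combination of plain constructs $C_{a_0,k}:\hyper{H}_{a_0}$, and let $\delta^k$ denote the (plain) delegation obtained from $\delta$ by replacing $\mathbf{C}_{a_0}$ with $C_{a_0,k}$. Since $\ast_B$ extends by linearity in each argument, $\ast_B(\delta)=\sum_{k\in K}\lambda_k\,\ast_B(\delta^k)$, and for each $k$ the defining equation \ref{shuffle-def} gives $\ast_B(\delta^k)=X_B(\ast((\delta^k)^B_1),\dots,\ast((\delta^k)^B_{n_B}))$, where $X_B=\Union_{b\in B}X_b$.

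Next I would observe that, because $a_0\notin B$, the decomposition $\hyper{H},X_B\leadsto\hyper{H}^B_1,\dots,\hyper{H}^B_{n_B}$ and the surjection $\varphi^B_\tau$ depend only on the roots $X_b$ ($b\in B$) and not on $C_{a_0,k}$. In particular, the index sets of the delegations $(\delta^k)^B_i$ and the constructs filling their positions are independent of $k$, with the single exception of position $a_0$ inside $(\delta^k)^B_{j_0}$, where $j_0=\varphi^B_\tau(a_0)$: there position $a_0$ is filled with the plain construct $C_{a_0,k}$. Hence $(\delta^k)^B_j=\delta^B_j$ for all $k$ when $j\neq j_0$, while the family $\{(\delta^k)^B_{j_0}\}_{k\in K}$ differs only in position $a_0$. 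By ``inward'' linearity of $\ast$ we get $\sum_{k\in K}\lambda_k\,\ast((\delta^k)^B_{j_0})=\ast(\delta^B_{j_0})$, the latter being the product of the delegation that carries $\mathbf{C}_{a_0}$ at position $a_0$ (using that $\ast$ has been extended by linearity to delegations of linear constructs). Feeding this back and using ``inward'' linearity of the grafting $X_B(-,\dots,-)$ yields $\sum_{k\in K}\lambda_k\,X_B(\ast((\delta^k)^B_1),\dots,\ast((\delta^k)^B_{n_B}))=X_B(\ast(\delta^B_1),\dots,\ast(\delta^B_{n_B}))$, which is the desired identity $\ast_B(\delta)=(\Union_{b\in B}X_b)(\ast(\delta^B_1),\dots,\ast(\delta^B_{n_B}))$.

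There is no real obstacle here; the only care needed is bookkeeping — tracking which argument slot the linear combination occupies before and after the decomposition, and checking explicitly that nothing in the formation of the $\delta^B_i$ touches the root of the construct at $a_0$ (which is exactly where the hypothesis $a_0\notin B$ enters, in contrast with Lemma~\ref{rooted-linear-construct-in-product}).
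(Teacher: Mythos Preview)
Your proposal is correct and follows exactly the approach the paper intends: the paper's own proof of this lemma is a one-liner saying that the argument goes as in Lemma~\ref{rooted-linear-construct-in-product}, with the simplification that since $a_0\notin B$ no information at all is required on $\mathbf{C}_{a_0}$. You have simply made this explicit, correctly identifying that $a_0\in A\backslash B\subseteq\tilde{A}$ so that the linear construct sits untouched at position $a_0$ in $\delta^B_{j_0}$ with $j_0=\varphi^B_\tau(a_0)$, and then invoking inward linearity.
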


\begin{proof} The proof goes  like in Lemma \ref{rooted-linear-construct-in-product}. The only difference is that, under the assumption that $a_0\nin B$, no information at all is required on ${\bf C}_{a_0}=\Sigma_{k\in K}  \lambda_k C_{a_0,k}$. 
\end{proof}

\noindent
So far, we have a magmatic unbiased notion of product.
The following theorem establishes the associativity of the   product for strict associative clans.

\begin{theorem}\label{assoc_strict}
Let $\Xi$ be an associative clan, and suppose that $\tau = (\setc{\hyper{H}_a}{a\in A},\hyper{H})\in \Xi$, $a_0\in A$, and $\tau'= (\setc{\hyper{H}_{(a_0,a')}}{a'\in A'},\hyper{H}_{a_0})\in\Xi$, 
and that we are given constructs $C_a:\hyper{H}_a$ for all $a\in A\backslash \{a_0\}$ 
and constructs $C_{(a_0,a')}:\hyper{H}_{(a_0,a')}$ for all all $a'\in A'$. Taking $\tau''$ to be the grafting of $\tau'$ to  $\tau$ along $a_0$ and  setting $A'':=(A\backslash\{a_0\})\cup\{(a_0,a')\,|\,a'\in A'\}$, denote the corresponding delegations by $\delta''=(\tau'',\{C_{a''}\,|\,a''\in  A'' \})$ and
 $\delta'=(\tau',\{C_{(a_0,a')}\,|\,a'\in A' \})$.
We then have that, for each $\emptyset\neq B''\subseteq A''$, the following {\em polydendriform equation} holds:
$$
\ast^{\tau''}_{B''}(\delta'')=\begin{cases}
\ast^{\tau}_{B''}(\{C_a\,|\,a\in A\backslash\{a_0\}\}\cup\{\ast^{\tau'}(\delta')\}), & \mbox{ if } B''\subseteq A\backslash\{a_0\}\\
\ast^{\tau}_{B}(\{C_a\,|\,a\in A\backslash\{a_0\}\}\cup \{\ast^{\tau'}_{B'}(\delta')\}), & \mbox{ if } B''\not\inc A\backslash\{a_0\}
\end{cases}
,$$ where the superscripts record the respective support teams, and  where, in the second case, $B=(B''\cap(A\backslash\{a_0\}))\cup\{a_0\}$, $B'=\{a'\in A'\,|\, (a_0,a')\in B''\}$ (both non-empty). Moreover, the polydendriform equation  implies the following associativity equation: $$\ast^{\tau''}(\delta'')=\ast^{\tau}(\{C_a\,|\,a\in A\backslash\{a_0\}\}\cup\{\ast^{\tau'}(\delta')\}).$$ 
\end{theorem}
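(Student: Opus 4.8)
The strategy is to first establish the polydendriform equation by case analysis on $B''$, and then derive the associativity equation by summing over all $\emptyset \neq B'' \subseteq A''$ with the appropriate powers of $q$. The key observation is that the index set $A''$ of the grafted team $\tau''$ decomposes naturally: a subset $B'' \subseteq A''$ either avoids all of the ``new'' indices $\{(a_0, a') \mid a' \in A'\}$, in which case it is essentially a subset of $A \setminus \{a_0\}$, or it meets them, in which case it ``sees'' position $a_0$ and splits into a part $B = (B'' \cap (A \setminus \{a_0\})) \cup \{a_0\}$ living in $\tau$ and a part $B' = \{a' \in A' \mid (a_0, a') \in B''\}$ living in $\tau'$.

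\textbf{Proving the polydendriform equation.} In the first case, $B'' \subseteq A \setminus \{a_0\}$, the union $\bigcup_{b \in B''} X_b$ of roots does not touch $H_{a_0}$ at all, so restricting $\hyper{H}$ by it produces connected components where the whole block $H_{a_0}$ (now refined into the $H_{(a_0,a')}$) sits inside a single component; by associativity of $\Xi$ and Lemma~\ref{Jovana-lemma} the bookkeeping matches exactly that of computing $\ast^{\tau}_{B''}$ on the delegation in which $C_{a_0}$ has been replaced by the linear construct $\ast^{\tau'}(\delta')$. Here I would invoke Lemma~\ref{linear-construct-in-product} to justify that $\ast^{\tau}_{B''}$ applied to a delegation with a linear construct in position $a_0 \notin B''$ still obeys its defining recursion. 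In the second case, the root of position $a_0$ in $\tau$ is, by construction, $\operatorname{root}(\ast^{\tau'}_{B'}(\delta')) = \bigcup_{b' \in B'} X_{b'}$, and the set $\bigcup_{b'' \in B''} X_{b''}$ equals $\bigcup_{b \in B} X_b$ where the contribution of $a_0$ is precisely that union of $X_{b'}$'s. One then checks that the connected components of $\restrH{H}{(\bigcup_{b''} X_{b''})}$ and the induced sub-delegations $(\delta'')^{B''}_i$ coincide with those of $\restrH{H}{(\bigcup_b X_b)}$ and $(\delta)^{B}_i$ after the replacement, using Lemma~\ref{rooted-linear-construct-in-product} to handle the rooted linear construct now sitting in position $a_0 \in B$. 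This is where strictness is essential: Lemma~\ref{Jovana-lemma} guarantees each relevant $H_{\tilde a}$ is connected in $\hyper{H}$, so the decompositions $\clandec$ behave functorially under the two-step refinement (refine $\hyper{H}_{a_0}$ by $\tau'$, then restrict $\hyper{H}$), which is exactly what associativity of the clan provides.

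\textbf{Deriving associativity.} Summing the polydendriform equation over $\emptyset \neq B'' \subseteq A''$ weighted by $q^{|B''|-1}$ and regrouping: the first-case terms reassemble into $\ast^{\tau}_{B''}$ applied, for each $\emptyset \neq B'' \subseteq A \setminus \{a_0\}$, to the delegation with $\ast^{\tau'}(\delta')$ in slot $a_0$ — this is $\ast_{B''}$ of that delegation, $a_0 \notin B''$. The second-case terms, indexed by pairs $(B, B')$ with $a_0 \in B \subseteq A$ and $\emptyset \neq B' \subseteq A'$, need to be regrouped first over $B'$ (for fixed $B$): since $|B''| = |B| - 1 + |B'|$, the factor $q^{|B''|-1} = q^{|B|-1} \cdot q^{|B'|-1}$ splits, and $\sum_{\emptyset \neq B' \subseteq A'} q^{|B'|-1} \ast^{\tau'}_{B'}(\delta') = \ast^{\tau'}(\delta')$ by the very definition~\eqref{shuffle-def} of the shuffle product; so these terms become $\sum_{a_0 \in B \subseteq A} q^{|B|-1} \ast^{\tau}_{B}(\cdots \cup \{\ast^{\tau'}(\delta')\})$, i.e.\ the terms of $\ast^{\tau}$ with $a_0 \in B$. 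Adding the two groups gives exactly $\sum_{\emptyset \neq B \subseteq A} q^{|B|-1} \ast^{\tau}_B$ of the delegation $\{C_a \mid a \neq a_0\} \cup \{\ast^{\tau'}(\delta')\}$, which is $\ast^{\tau}(\{C_a\} \cup \{\ast^{\tau'}(\delta')\})$.

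\textbf{Main obstacle.} The delicate point is the second case of the polydendriform equation: verifying that the connected-component decomposition $\tau'', \bigcup_{b''} X_{b''} \clandec (\tau'')^{B''}_i$ literally agrees, component by component and delegation-slot by delegation-slot, with the decomposition obtained by first forming $\ast^{\tau'}_{B'}(\delta')$ (whose root is $\bigcup_{b'} X_{b'}$) and then applying $\clandec$ to $\tau$ with $B$. Both sides restrict $\hyper{H}$ by the same subset of vertices, so the hypergraph-level decompositions coincide; the real work is tracking how the trees $C_{\tilde a}$ get reassembled and confirming that $(\delta'')^{B''}_i = (\delta)^B_i$ after substitution — in particular that the rooted-linear-construct slot $a_0 \in B$ gets $C_{a_0} = (\bigcup_{b'} X_{b'})(\text{children})$ expanded correctly, which is precisely the content of Lemma~\ref{rooted-linear-construct-in-product}. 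I expect this to be a somewhat intricate but ultimately mechanical unwinding of the definitions, with the associativity hypothesis on $\Xi$ being the structural ingredient that makes all the $\tilde A$-indexings compatible.
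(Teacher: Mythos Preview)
Your overall architecture matches the paper's proof exactly: the same two-case split on whether $B''$ meets $\{(a_0,a')\mid a'\in A'\}$, the same invocation of Lemma~\ref{linear-construct-in-product} in case~1 and Lemma~\ref{rooted-linear-construct-in-product} in case~2, and the same derivation of associativity by summing over $B''$ and using the bijection $B''\leftrightarrow(B,B')$ with the factorisation $q^{|B''|-1}=q^{|B|-1}q^{|B'|-1}$.

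There is, however, a genuine gap. You assert that in case~1 ``the bookkeeping matches exactly'' and in case~2 that the sub-delegations $(\delta'')^{B''}_i$ ``coincide'' with $(\delta)^B_i$ after replacement. This is not literally true, and the discrepancy is exactly where the proof needs \emph{induction on $|H|$}, which you never mention. Concretely, in case~1, for the component $l_0=\varphi^{B''}_\tau(a_0)$ the delegation $(\delta'')^{B''}_{l_0}$ contains all the constructs $C_{(a_0,a')}$ sitting flatly alongside the other $C_{\tilde a}$, whereas $\delta^{B''}_{l_0}$ has the single linear construct $\ast^{\tau'}(\delta')$ in slot $a_0$. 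These are \emph{different} delegations over the same team in $\hyper{H}^{B''}_{l_0}$; what is true is that $\ast((\delta'')^{B''}_{l_0})=\ast(\delta^{B''}_{l_0})$, and that equality is precisely the associativity statement for the strictly smaller hypergraph $\hyper{H}^{B''}_{l_0}$. The same phenomenon occurs in case~2: the paper displays the two sides as
\[
\ast(\ldots,C_y,\ldots,\ldots,C_x,\ldots) \quad\text{versus}\quad \ast(\ldots,C_y,\ldots,\ast(\ldots,C_x,\ldots),\ldots)
\]
and closes the gap by induction on each $\hyper{H}^{B''}_i$. So the argument is not a ``mechanical unwinding of the definitions'' as you anticipate; the polydendriform equation at size $|H|$ genuinely relies on the associativity equation at smaller sizes, and the two statements must be proved together by a simultaneous induction.
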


\begin{remark}
The two cases of the polydendriform equation can be drawn as:
\begin{center}

\begin{equation*}
  \left\{
      \begin{aligned}
\begin{tikzpicture}[scale=0.5,grow=up]
\node {$\ast_{B''}$} 
	child{}
	child{}
	child{}
	child{}
	child{}
	child{};
	\draw[blue] (-2,0.5) edge[in=150, out=30](2,0.5);
	\draw[blue] (2,0) node{$A''$};
	\end{tikzpicture}
& = 
\begin{tikzpicture}[scale=0.5,grow=up]
\node {$\ast_{B''}$} 
	child{}
	child{}
	child{node{$\ast$} 
			child{}
			child{}
			child{}
			child{}
	}
	child{}
;
	\draw[blue] (-2,0.2) edge[in=150, out=30](2,0.2);
	\draw[blue] (2,0) node{$A$};

	\draw[blue] (-3,2) edge[in=150, out=30](1,2);
	\draw[blue] (1.2,1.7) node{$A'$};
	\draw[blue] (0,1.1) node{$a_0$};
	
\end{tikzpicture}
\text{if } B''\subseteq A\backslash\{a_0\} \\
\begin{tikzpicture}[scale=0.5,grow=up]
\node {$\ast_{B''}$} 
	child{}
	child{}
	child{}
	child{}
	child{}
	child{};
	\draw[blue] (-2,0.5) edge[in=150, out=30](2,0.5);
	\draw[blue] (2,0) node{$A''$};
	\end{tikzpicture}
& = 
\begin{tikzpicture}[scale=0.5,grow=up]
\node {$\ast_B$} 
	child{}
	child{}
	child{node{$\ast_{B'}$} 
			child{}
			child{}
			child{}
			child{}
	}
	child{}
;
	\draw[blue] (-2,0.2) edge[in=150, out=30](2,0.2);
	\draw[blue] (2,0) node{$A$};

	\draw[blue] (-3,2) edge[in=150, out=30](1,2);
	\draw[blue] (1.2,1.7) node{$A'$};
	\draw[blue] (0,1.1) node{$a_0$};
\end{tikzpicture}
\text{if } B''\not\inc A\backslash\{a_0\}
      \end{aligned}
    \right.
\end{equation*}
\end{center}

\end{remark}

\begin{proof}

We set $\delta=(\tau,\{C_a\,|\,a\in A\backslash\{a_0\}\}\cup\{\ast^{\tau'}(\delta')\})$.
We first show the polydendriform equation.
We proceed by induction on $|H|$. Figure \ref{fig:assoc-cobord} will  help the reader to visualize  the notations  introduced in case (2) of  the  proof.
Denote, for each $a''\in A''$, $X_{a''}:=\mbox{root}(C_{a''})$.  By definition of the operation $\ast_{B''}$, supposing that ${\bf H}, X_{B''}\leadsto \hyper{H}^{B''}_1,\dots ,\hyper{H}^{B''}_{n_{B''}}$, where $X_{B''}=\bigcup_{b''\in B''}X_{b''}$, we have that 
$$\ast_{B''}^{\tau''}(\delta'')= X_{B''}(\ast((\delta'')^{B''}_1),\dots,\ast((\delta'')^{B''}_{n_{B''}})),$$ where, for $1\leq i\leq n_{B''}$, $$(\delta'')_i^{B''}=(\{C_{\widetilde{a''}}:{\bf H}_{\widetilde{a''}}\,|\, \widetilde{a''}\in \widetilde{A''} \mbox{ and } \varphi_{\tau''}^{B''}(\widetilde{a''})=i\},{\bf H}_i^{B''}),$$
with the indexing set
\begin{equation*}
\widetilde{A''}:=A''\backslash B''\cup\{(b'',q)\,|\,b''\in B'' \mbox{ and } 1\leq q\leq n_{b''}\}
\end{equation*}
arising from  ${\bf H}_{b''}, X_{b''}\leadsto \hyper{H}_{(b'',1)},\dots \hyper{H}_{(b'',n_{b''})}$ ($b''\in B''$).   We examine the two cases of the statement in turn.

\begin{figure}
\begin{center}
\includegraphics{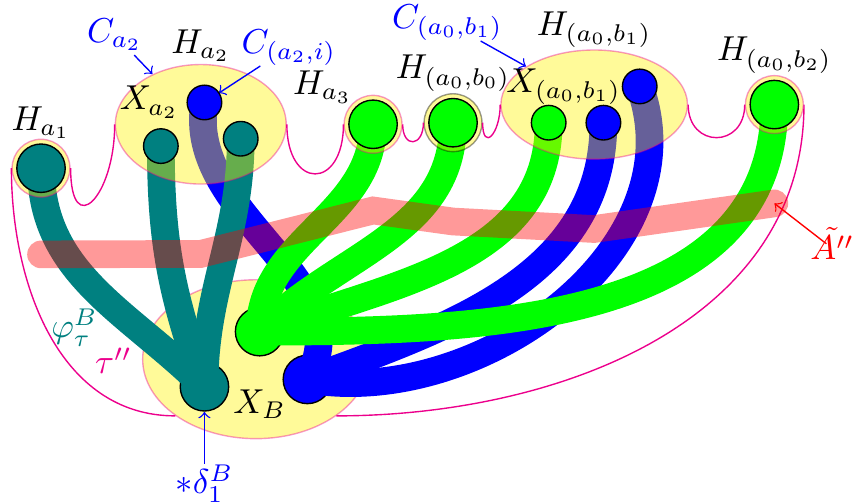} 
\includegraphics{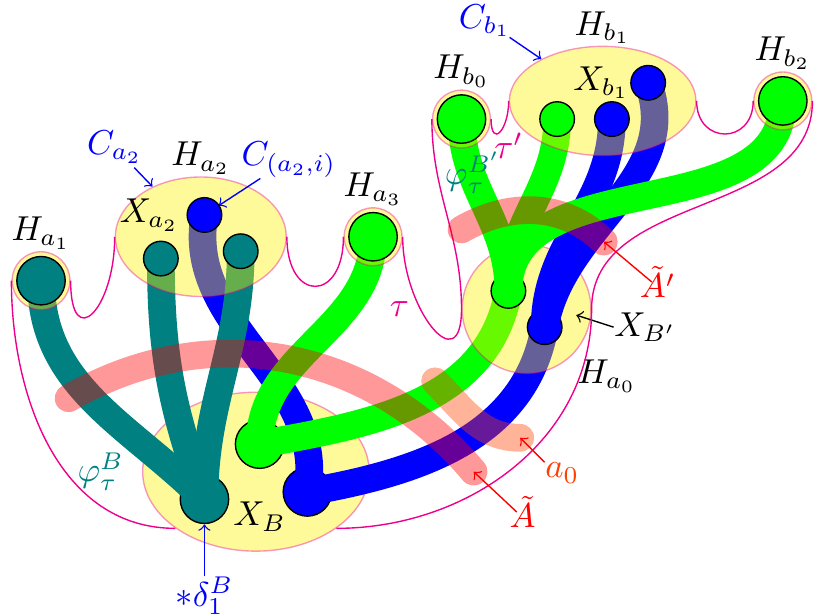}
\end{center}
\caption{Illustration of associativity via cobordisms.
We have: $A=\{a_0, \ldots, a_3\}$, 
$A'=\{b_0, \ldots, b_2\}$,
$A''=\{a_1, \ldots, a_3, (a_0, b_0), (a_0, b_1), (a_0, b_2)\}$,
$B'' = \{a_2,(a_0,b_1)\}$, 
$B' = {b_1}$
and $ B = {a_2,a_0}$.
 \label{fig:assoc-cobord}}
\end{figure}
\begin{enumerate}
\item If $B''\inc A\backslash\set{a_0}$, then, setting ${\bf C}_{a_0}:=\ast^{\tau'}(\delta')$,   we have (using Lemma \ref{linear-construct-in-product}):
$$\ast^{\tau}_{B''}(\delta)= X_{B''}(\ast(\delta^{B''}_1),\dots,\ast(\delta^{B''}_{n_{B''}})),$$
where, for $1\leq l\leq n_{B''}$,
$$\delta^{B''}_l=(\{C_{\tilde{a}}:{\bf H}_{\tilde{a}}\,|\, \tilde{a}\in\tilde{A} \mbox{ and } \varphi_{\tau}^{B''}(\tilde{a})=l\},{\bf H}^{B''}_l),$$
with the indexing set
$$\tilde{A}:=A\backslash
 B''\cup\{(b,p)\,|\, b\in B'' \mbox{ and } 1\leq p\leq n_b\}$$ arising
 from  ${\bf H}_b, X_b\leadsto \hyper{H}_{(b,1)},\dots, \hyper{H}_{(b,n_b)}$ ($b\in B''$).  
Then, establishing  $\ast_{B''}^{\tau''}(\delta'')=\ast_{B''}^{\tau}(\delta)$ amounts to showing  that 
$\ast((\delta'')^{B''}_l)=\ast(\delta^{B''}_l)$, for all $1\leq l\leq n_{B''}$. 

Let $\pi'':\widetilde{A''}\rightarrow A''$ and $\pi:\tilde{A}\rightarrow A$ be the obvious projections (cf. proof of Lemma \ref{Jovana-lemma}). Then it is readily seen (remembering that $H_{(a_0,a')}\inc H_{a_0}$) that $(\pi'')^{-1}(A\backslash\set{a_0})= \widetilde{A''} \cap \tilde{A}= \pi^{-1}(A\backslash\set{a_0})$ and
\begin{equation}
\label{phi-phi-case1}
\varphi_\tau^{B''}|_{\widetilde{A''} \cap \tilde{A}}=\varphi_{\tau''}^{B''}|_{\widetilde{A''} \cap \tilde{A}}\quad \mbox{ and }\quad
\varphi_{\tau}^{B''}\!(a_0)=\varphi_{\tau''}^{B''}(a_0,a'), 
\end{equation}
for all $a'\in A'$. It follows that  for $l\neq \varphi_{\tau}^{B''}(a_0):=l_0$ we have that $(\delta'')^{B''}_l=\delta^{B''}_l$, while (remembering the definition of ${\bf C}_{a_0}$) the equality
$\ast((\delta'')^{B''}_{l_0})=\ast(\delta^{B''}_{l_0})$ follows by induction on $\hyper{H}_{a_0}$.

\smallskip

\item For $B''\not\inc A\backslash\set{a_0}$, let  ${B}:=(B''\inter(A\backslash\set{a_0}))\union \set{a_0}$ and ${B'}:=\setc{a'\in A'}{(a_0,a')\in B''}$.   Let $X_{{B'}}:=\bigcup_{b'\in {B'}} X_{(a_0,b')}$ and suppose that  ${\bf H}_{a_0}, X_{{B'}}\leadsto (\hyper{H}_{a_0})_1^{{B'}},\dots,(\hyper{H}_{a_0})^{{B'}}_{m_{{B'}}}$.  We have by definition
$$\ast_{B'}^{\tau'}(\delta')=X_{{B'}}(\ast((\delta')^{{B'}}_1),\dots,\ast((\delta')^{{B'}}_{m_{{B'}}})),$$
where, for $1\leq j\leq m_{{B'}}$, 
$$(\delta')^{{B'}}_{j}=(\{C_{\widetilde{a'}}:{\bf H}_{\widetilde{a'}}\,|\, \widetilde{a'}\in \widetilde{A'} \mbox{ and } \varphi_{\tau'}^{{B'}}(\widetilde{a'})=j\},({\bf H}_{a_0})^{{B'}}_{j}),$$
with the indexing set 
$$\quad\quad\widetilde{A'}:=\{(a_0,a')\,|  \,a'\in A'\backslash B'\}\} 
\cup \{(a_0,b',k)\,|\, b'\in B' \text{ and } 1\leq k\leq n_{b'}\}$$
arising from  $H_{(a_0,b')}, X_{(a_0,b')}\leadsto \hyper{H}_{(a_0,b',1)},\dots,\hyper{H}_{(a_0,b',n_{b'})}$ ($b'\in B'$). Setting  ${\bf C}^{B'}_{a_0}:=X_{B'}(\ast((\delta')^{B'}_1),\dots,\ast((\delta')^{B'}_{m_{B'}}))$, the equality that we aim to prove displays as  
\begin{equation}\label{eqcont}
\ast_{B''}^{\tau''}(\delta'')=\ast_{B}^{\tau}(\{C_a\,|\, a\in A\backslash\{a_0\}\}\cup\{{\bf C}^{B'}_{a_0}\}).
\end{equation}
Furthermore, by setting $X_{a_0}:=X_{{B'}}$ and $X_{{B}}:= \bigcup_{b\in {B}}X_b$, we can write 
$$X_{B''}=(\bigcup_{b\in {B}\backslash\{a_0\}}X_b)\bigcup \set{X_{{B'}}}= (\bigcup_{b\in {B}\backslash\{a_0\}}X_b)\bigcup \set{X_{a_0}} =
X_{{B}}.$$
We can then  transform \eqref{eqcont}  (applying Lemma \ref{rooted-linear-construct-in-product})
into 
\begin{equation}
X_{B}(\ast((\delta'')^{B''}_1),\dots,\ast((\delta'')^{B''}_{n_{B''}}))=  
X_{{B}}(\ast(\delta^{{B}}_1),\dots,\ast(\delta^{{B}}_{n_{{B}}})),
\end{equation}
where
${\bf H}, X_{{B}}\leadsto \hyper{H}_1^{{B}},\dots,\hyper{H}^{{B}}_{n_{{B}}}$, $n_{{B}}=n_{B''}$, and
for $1\leq l\leq n_{{B}}$, 
$$\delta^B_l=(\{C_{\tilde{a}}:{\bf H}_{\tilde{a}}\,|\, \tilde{a}\in\tilde{A} \mbox{ and } \varphi_{\tau}^{{B}}(\tilde{a})=l\},{\bf H}^{{B}}_l),$$
with the indexing set 
$$\tilde{A}:=A\backslash {B}\cup\{(b,p)\,|\, b\in {B} \mbox{ and } 1\leq p\leq n_b\}$$
arising from ${\bf H}_b, X_b\leadsto \hyper{H}_{(b,1)},\dots,\hyper{H}_{(b,n_b)}$ ($b\in {B}$),
and where 
$$C_{\tilde{a}}=\begin{cases}
C_a, & \mbox{ if } \tilde{a}\in (A\backslash {B}),\\
\ast((\delta')^{{B'}}_{p}), & \mbox{ if } \tilde{a}=(a_0,p),\\
C_{(b,p)}, & \mbox{ if } \tilde{a}=(b,p) \quad(b\in {B}\backslash\set{a_0}).
\end{cases}$$
 Now, since $X_{B''}=X_{\plunderline{B}}$, 
we can suppose, without loss of generality, that $H_{i}^{B''}=H^{\plunderline{B}}_{i}$, for all $1\leq i\leq n_{\plunderline{B}}=n_{B''}$. Therefore, it remains to show that $\ast((\delta'')^{B''}_{i})=\ast(\delta_i^{\plunderline{B}})$. Observe that, since $$\quad\quad\quad \hyper{H}_{(a_0,1)},\dots,\hyper{H}_{(a_0,n_{a_0})}\reflectbox{$\leadsto$}\,{\bf H}_{a_0}\backslash X_{a_0}={\bf H}_{a_0}\backslash X_{\plunderline{B'}}\leadsto (\hyper{H}_{a_0})_1^{\plunderline{B'}},\dots,(\hyper{H}_{a_0})^{\plunderline{B'}}_{m_{\plunderline{B'}}},$$
we have that $n_{a_0}=m_{\plunderline{B'}}$, and  we can assume (without loss of generality) that $H_{(a_0,p)}=(H_{a_0})^{\plunderline{B'}}_p$, for each $1\leq p\leq n_{a_0}$.   
Simple inspection (and standard argumentation with connected components) yields
$$\begin{array}{l}
\pi''^{-1}(A\backslash\set{a_0})= \widetilde{A''}\cap \tilde{A} = \pi^{-1}(A\backslash\set{a_0})\\
\pi''^{-1}(a_0) = \widetilde{A'},
\end{array}$$
where $\pi'':\widetilde{A''}\rightarrow A''$  and $\pi:\tilde{A}\rightarrow A$  are the obvious projections,
and  \begin{equation}\label{phi-phi-case2}
\varphi^{B''}_{\tau''}|_{\widetilde{A''}\cap \tilde{A}}=\varphi_{\tau}^{\plunderline{B}}|_{\widetilde{A''}\cap \tilde{A}} \quad\mbox{ and }\quad  \varphi^{B''}_{\tau''}|_{\widetilde{A'}}=\tilde{\varphi}^{\plunderline{B}}_{\tau}\circ\varphi^{\plunderline{B'}}_{\tau'},\end{equation} 
where   
$\tilde{\varphi}_{\tau}^{\plunderline{B}}(j):={\varphi}_{\tau}^{\plunderline{B}}((a_0,j))$, for each $1\leq j\leq n_{a_{0}}$.
We  note that, thanks  to \eqref{phi-phi-case2}, $\ast((\delta'')^{B''}_{i})$ and $\ast(\delta_i^{\plunderline{B}})$ look respectively like this:
 $$\begin{array}{lll}
\quad \quad\quad \ast((\delta'')^{B''}_{i}) & = &  \ast(\underbrace{\ldots, C_y, \ldots}_{y\in \tilde{A}''\cap \tilde{A},\varphi_{\tau''}^{B''}(y)=i}\,\,\,\,\,\,,\:\ldots\:,\,\underbrace{\ldots ,C_x, \ldots}_{x\in \tilde{A}',\, \varphi^{\plunderline{B'}}_{\tau'}(x)=j\in (\tilde{\varphi}_{\tau}^{\plunderline{B}})^{-1}(i)}  \,\,\,\;,\ldots)\\
\quad\quad\quad\ast(\delta_i^{\plunderline{B}}) &=& \ast(\underbrace{\ldots, C_{y},\ldots}_{y\in \widetilde{A''}\cap \tilde{A},\varphi_{\tau}^{\plunderline{B}}(y)=i}\,\,\,\,\,\,,\:\ldots\:,\,\underbrace{\ast(\ldots, C_x, \ldots)}_{x\in \tilde{A}',\, \varphi^{\plunderline{B'}}_{\tau'}(x)=j\in (\tilde{\varphi}_{\tau}^{\plunderline{B}})^{-1}(i)}  \,\,\,\;,\ldots)
 \end{array}$$  
 and we conclude by applying   induction  to each $\hyper{H}^{B''}_i$ (note that repeated induction, or no induction at all, may be needed for a single fixed $i$, depending on the cardinality of 
 $\varphi^{\plunderline{B'}}_{\tau'}(\tilde{A})
 \cap (\tilde{\varphi}_{\tau}^{\plunderline{B}})^{-1}(i)$).
\end{enumerate}

This concludes the proof of the polydendriform equation.
Associativity is derived as follows. Writing $\delta_{B'}$ for $\{C_a\,|\,a\in A\backslash\{a_0\}\}\cup \{\ast^{\tau'}_{B'}(\delta')\}$, we have  on one hand (in-lining the polydendriform equation):
$$\begin{array}{lllll}
\ast^{\tau''}(\delta'') & = & \overbrace{\sum_{\emptyset\incs B''\subseteq A\backslash\set{a_0}} q^{{|B''|}-1} \:\ast^{\tau}_{B''}(\delta)}^{A_1} & +
& \overbrace{\sum_{\emptyset\incs B''\not\subseteq A\backslash\set{a_0}} q^{{|B''|}-1} \:\ast^{\tau}_B(\delta_{B'})}^{B_1}\\
\end{array}
$$
with $B,B'$ determined from $B''$ as specified in the statement, and on the other hand  (expanding the second summand by linearity):
$$
\ast^{\tau}(\delta)\; =\;  \underbrace{\sum_{\emptyset\incs B\subseteq A\backslash\set{a_0}} q^{{|B|}-1} \:\ast^{\tau}_{B}(\delta)}_{A_2} \; + \;
\underbrace{\sum_{\emptyset\incs B\not\subseteq A\backslash\set{a_0}}  \sum_{\emptyset\incs B'\subseteq A'}q^{|B|+|B'|-2} \:\ast^{\tau}_B(\delta_{B'})}_{B_2}
$$
We have $A_1=A_2$ literally, while $B_1=B_2$ follows by noticing that the map $B''\mapsto ((B''\cap(A\backslash\{a_0\}))\cup\{a_0\},\{a'\in A'\,|\, (a_0,a')\in B''\})$ is bijective. 
\end{proof}

\begin{remark} One could formulate the polydendriform structure as an algebra over a colored operad, where the colors are hypergraphs, the operations are teams, and the carrier of the algebra for the color $\hyper{H}$ is the set of constructs of $\hyper{H}$.
\end{remark}

We shall now relate the polydendriform structure to the tridendriform one, by showing that  the former implies (and can be considered as the unbiased version of) the latter, in the {\em ordered}   framework. 
\smallskip

Let $\Xi$ be an ordered associative clan. Suppose that we have 
$$\{((\hyper{H}_1,\hyper{H}_{2'}),\hyper{H}),   ((\hyper{H}_2,\hyper{H}_{3}),\hyper{H}_{2'}), 
((\hyper{H}_{1'},\hyper{H}_{3}),\hyper{H}),  ((\hyper{H}_1,\hyper{H}_{2}),\hyper{H}_{1'})\}\in\Xi.$$ Denote by $\tau''_1$ the grafting of $((\hyper{H}_1,\hyper{H}_{2}),\hyper{H}_{1'})$ to $((\hyper{H}_{1'},\hyper{H}_{3}),\hyper{H})$ along $1'$, and by $\tau''_2$ the grafting of $((\hyper{H}_2,\hyper{H}_{3}),\hyper{H}_{2'})$ to $((\hyper{H}_1,\hyper{H}_{2'}),\hyper{H})$ along $2'$.  
Note that the above teams are all of the (generic) form
$((\hyper{H}_l,\hyper{H}_r),\hyper{L})$.
We write (cf. Example \ref{permutohedra- })
$$\prec\,:=\ast_{\set{l}} \quad\quad \bcdot\,:=\ast_{\set{l,r}} \quad\quad \succ\,:=\ast_{\set{r}}.$$ 
\begin{proposition} \label{poly-implies-tri}
In the ordered framework, the tridendriform equations follow from the polydendriform ones, relatively to the team $\tau''=((\hyper{H}_1, \hyper{H}_2, \hyper{H}_3), \hyper{H})$.
More precisely, Loday-Ronco's seven equations, as listed in the introduction, correspond to  choosing $B''$ to be $\set{1}$, $\set{2}$, $\set{3}$, $\set{1,2,3}$, $\set{2,3}$, $\set{1,3}$, $\set{1,2}$, respectively.
\end{proposition}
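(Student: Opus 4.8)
The strategy is to compute, for each nonempty $B''\subseteq\{1,2,3\}$, the component $\ast_{B''}^{\tau''}(\delta'')$ of the shuffle product in two different ways via the polydendriform equation of Theorem~\ref{assoc_strict} --- once through the grafting $\tau''_1$ of $((\hyper{H}_1,\hyper{H}_2),\hyper{H}_{1'})$ to $((\hyper{H}_{1'},\hyper{H}_3),\hyper{H})$ along $1'$, and once through the grafting $\tau''_2$ of $((\hyper{H}_2,\hyper{H}_3),\hyper{H}_{2'})$ to $((\hyper{H}_1,\hyper{H}_{2'}),\hyper{H})$ along $2'$ --- and then to observe that equating the two answers, as $B''$ ranges over the seven nonempty subsets, produces exactly the seven tridendriform equations. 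First one records the preliminary facts: using that the four binary teams of the hypothesis are ordered one gets $H_1<H_2<H_3$ with $H_1\sqcup H_2=H_{1'}$ and $H_2\sqcup H_3=H_{2'}$, so $\tau''_1$ and $\tau''_2$ both coincide, as ordered teams, with $\tau''=((\hyper{H}_1,\hyper{H}_2,\hyper{H}_3),\hyper{H})$ under the evident order-preserving identification of the two copies of the index set with $\{1,2,3\}$; moreover $\tau''_1$, $\tau''_2$ and the four binary teams all belong to $\Xi$ since $\Xi$ is an associative clan. Fix arbitrary constructs $a=C_1:\hyper{H}_1$, $b=C_2:\hyper{H}_2$, $c=C_3:\hyper{H}_3$, set $\delta''=(\tau'',\{C_1,C_2,C_3\})$, and form $\delta'_1=(((\hyper{H}_1,\hyper{H}_2),\hyper{H}_{1'}),\{C_1,C_2\})$ and $\delta'_2=(((\hyper{H}_2,\hyper{H}_3),\hyper{H}_{2'}),\{C_2,C_3\})$.

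Fix $\emptyset\neq B''\subseteq\{1,2,3\}$. Applying Theorem~\ref{assoc_strict} relative to $\tau''_1$, whose outer team is $((\hyper{H}_{1'},\hyper{H}_3),\hyper{H})$ with grafted position $a_0=1'$ and inner index set $A'=\{1,2\}$: if $B''\cap\{1,2\}=\emptyset$, i.e.\ $B''=\{3\}$, the first case of the polydendriform equation yields $\ast_{\{3\}}^{\tau''_1}(\delta'')=(a\ast b)\succ c$; otherwise the second case yields $\ast_{B''}^{\tau''_1}(\delta'')$ equal, in infix notation, to $(a\,\mathrm{op}(B')\,b)\,\mathrm{op}(B)\,c$, with $B=(B''\cap\{3\})\cup\{1'\}$ and $B'=B''\cap\{1,2\}$, where $\mathrm{op}(B)$ is the symbol attached to $B$ for $((\hyper{H}_{1'},\hyper{H}_3),\hyper{H})$ and $\mathrm{op}(B')$ the one attached to $B'$ for $((\hyper{H}_1,\hyper{H}_2),\hyper{H}_{1'})$, under the convention $\ast_{\{l\}}=\prec$, $\ast_{\{r\}}=\succ$, $\ast_{\{l,r\}}=\bcdot$. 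Symmetrically, applying Theorem~\ref{assoc_strict} relative to $\tau''_2$, whose outer team is $((\hyper{H}_1,\hyper{H}_{2'}),\hyper{H})$ with $a_0=2'$ and $A'=\{2,3\}$: if $B''\cap\{2,3\}=\emptyset$, i.e.\ $B''=\{1\}$, the first case gives $\ast_{\{1\}}^{\tau''_2}(\delta'')=a\prec(b\ast c)$; otherwise the second case gives $\ast_{B''}^{\tau''_2}(\delta'')=a\,\mathrm{op}(B)\,(b\,\mathrm{op}(B')\,c)$, with $B=(B''\cap\{1\})\cup\{2'\}$ and $B'=B''\cap\{2,3\}$, the symbols now read off from $((\hyper{H}_1,\hyper{H}_{2'}),\hyper{H})$ and $((\hyper{H}_2,\hyper{H}_3),\hyper{H}_{2'})$ respectively. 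In both computations the products having a (rooted) linear construct in one slot are made sense of by Lemmas~\ref{rooted-linear-construct-in-product} and~\ref{linear-construct-in-product}, exactly as in the proof of Theorem~\ref{assoc_strict}.

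Since $\tau''_1=\tau''_2=\tau''$, the two expressions obtained for $\ast_{B''}^{\tau''}(\delta'')$ coincide. It then remains to tabulate the symbol dictionary --- for $((\hyper{H}_{1'},\hyper{H}_3),\hyper{H})$: $\{1'\}\mapsto\prec$, $\{3\}\mapsto\succ$, $\{1',3\}\mapsto\bcdot$; for $((\hyper{H}_1,\hyper{H}_{2'}),\hyper{H})$: $\{1\}\mapsto\prec$, $\{2'\}\mapsto\succ$, $\{1,2'\}\mapsto\bcdot$; for $((\hyper{H}_1,\hyper{H}_2),\hyper{H}_{1'})$: $\{1\}\mapsto\prec$, $\{2\}\mapsto\succ$, $\{1,2\}\mapsto\bcdot$; for $((\hyper{H}_2,\hyper{H}_3),\hyper{H}_{2'})$: $\{2\}\mapsto\prec$, $\{3\}\mapsto\succ$, $\{2,3\}\mapsto\bcdot$ --- and to run through the cases: $B''=\{1\},\{2\},\{3\},\{1,2,3\},\{2,3\},\{1,3\},\{1,2\}$ give respectively $(a\prec b)\prec c=a\prec(b\ast c)$, $(a\succ b)\prec c=a\succ(b\prec c)$, $(a\ast b)\succ c=a\succ(b\succ c)$, $(a\bcdot b)\bcdot c=a\bcdot(b\bcdot c)$, $(a\succ b)\bcdot c=a\succ(b\bcdot c)$, $(a\prec b)\bcdot c=a\bcdot(b\succ c)$ and $(a\bcdot b)\prec c=a\bcdot(b\prec c)$, that is $(\precstar)$, $(\succ\prec)$, $(\astsucc)$, $(\bcdotass)$, $(\succbcdot)$, $(\precbcdotsucc)$, $(\bcdotprec)$. (For example $B''=\{1,3\}$: via $\tau''_1$ one gets $B=\{1',3\}$, $B'=\{1\}$, i.e.\ $(a\prec b)\bcdot c$; via $\tau''_2$, $B=\{1,2'\}$, $B'=\{3\}$, i.e.\ $a\bcdot(b\succ c)$, which is $(\precbcdotsucc)$.)

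The only real work is this bookkeeping. For each of the two graftings and each value of $B''$ one must keep track of which binary team each intermediate product lives over, so that $\prec$, $\succ$, $\bcdot$ are unambiguously $\ast_{\{l\}}$, $\ast_{\{r\}}$, $\ast_{\{l,r\}}$ for that team --- which is exactly where the ordered framework enters --- and one must notice that the two first-case instances, $B''=\{3\}$ and $B''=\{1\}$, are precisely the ones in which the full shuffle product $\ast$ appears as the inner operation, accounting for the occurrence of $\ast$ in $(\astsucc)$ and $(\precstar)$. Once the dictionary above is laid out, the seven equations follow by direct inspection, with no further computation.
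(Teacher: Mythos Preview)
Your proof is correct and follows exactly the same approach as the paper's: apply the polydendriform equation of Theorem~\ref{assoc_strict} to $\tau''$ via each of the two graftings $\tau''_1$ and $\tau''_2$, equate the results, and read off one tridendriform identity per nonempty $B''\subseteq\{1,2,3\}$. The paper's own proof only spells out the case $B''=\{2,3\}$ and the observation about which two cases invoke the first branch of the polydendriform equation, whereas you have carried out the full tabulation; but the method is identical.
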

\begin{proof} 
As a sanity check, we first note that there are $2^3-1=7$ non-empty subsets of $\set{1,2,3}$. We check the equation $(\succbcdot)$. Let
$S:\hyper{H}_1$, $T:\hyper{H}_2$, $U:\hyper{H}_3$. We have
{\small
$$
\ast^{\tau''_1}_{\set{2,3}} (S,T,U)  =  \ast_{\set{1’,3}}((\ast_{\set{2}}(S,T),U))=   ((S \succ T) \bcdot U)$$
and $$ \ast^{\tau''_1}_{\set{2,3}} (S,T,U)  = \ast^{\tau''_2}_{\set{2,3}} (S,T,U) =  \ast_{\set{2’}} (S,(\ast_{\set{2,3}}(T,U))) = S \prec (T \bcdot U).$$}
Note that all tridendriform equations follow from the second case of the polydendriform equation, except  $(\precstar)$  and  $(\astsucc)$ (for which we use the first case, and which are the only tridendriform equations involving $\ast$).
\end{proof}

Combining the results of \S\ref{restrictohedra-section} and \S\ref{asp}, we get a whole range of polydendriform/tridendriform structures, and in particular we get structures associated with the graphs  $\bold\Gamma^k$ of Proposition \ref{Gamma-order-friendly}. As we have seen,
for the instances $k=1$ and $k=\infty$ we recover the tridendriform structures of \S\ref{Prologue-section}, thus fulfilling our unifying goal, with a whole infinity of examples sitting ``in the middle''. The case k=2 is that of friezohedra.

\subsection{A non-recursive definition of the product} \label{non-recursive}
In this subsection, we give an equivalent, non-recursive, definition of the   product, directly inspired from \cite{RoncoGTO}. 
 Let $\hyper{H},\hyper{L}$ be two connected hypergraphs such that $H\inc L$ and such that, for all $e\in\hyper{H}$, $e$ is connected in $\hyper{L}$. This entails in particular that $H$ is connected in $\hyper{L}$. Let $S=X(S_1,\ldots,S_n)$  be a construct of $\hyper{L}$, with $S_i:\hyper{L}_i$ where $\hyper{L},X\leadsto\hyper{L}_1,\ldots,\hyper{L}_n$. Then we define a construct $\restrconstr{S}{\hyper{H}}$ of $\hyper{H}$ as follows. We distinguish two cases: 
\begin{itemize}
\item
if $X\cap H=\emptyset$, then there is a unique $j$ such that
$H\inc L_j$, and we set $$\restrconstr{S}{\hyper{H}}=\restrconstr{(S_j)}{\hyper{H}};$$
\item
if $X\cap H\neq\emptyset$, let $\hyper{H},(X\cap H)\leadsto \hyper{H}_1,\ldots,\hyper{H}_p$. This determines a function $\varphi_X^{\hyper{H},\hyper{L}}:\set{1,\ldots,p}\rightarrow\set{1,\ldots,n}$, and we set  $$\restrconstr{S}{\hyper{H}}=(X\cap H)(\ldots,\restrconstr{(S_{\varphi_X^{\hyper{H},\hyper{L}}(i)})}{\hyper{H}_i},\ldots).$$
\end{itemize}
That  $\restrconstr{S}{\hyper{H}}$ is indeed a construct of $\hyper{H}$ is easily seen by induction. 

\begin{example} \label{4.19} In the universe of friezohedra, let us consider $\hyper{H}=\hyper{F}_{\{1, 3, 5\}}$ and $\hyper{L}=\hyper{F}_{\{1, \ldots, 5\}}$. As every edge in $\hyper{H}$ is also in $\hyper{L}$, the hypothesis above is satisfied. Consider the construct $S=3(14(2,5))$ of $\hyper{L}$. Then $\restrconstr{S}{\hyper{H}}=3(1,5)$.
\end{example}

\smallskip
In the next lemma, we give a simpler (but more ``mysterious'')  alternative description of  $\restrconstr{S}{\hyper{H}}$ in terms of tubings. Recall the following notations from \cite{COI}. For every node $Y$ of $S$, we denote by $\valup_S(Y)$  (or simply $\valup(Y)$) the union of the  labels of the descendants of $Y$ in $S$ (all the way to the leaves), including $Y$. By definition of constructs, $\valup_S(Y)$ is always connected in $\hyper{L}$.  We then associate with $S$ the following set of connected subsets, or  {\em tubing}\footnote{We refer to \cite{COI}[Proposition 2] for an exact characterization of inductively defined constructs as tubings. We just note here that the function $\psi$ defined above provides a bijection from constructs to tubings.} (cf.  Footnote \ref{construct-tubing}):
$$\psi(S)= \setc{\valup(Y)}{Y\;\mbox{is a (label of a) node of}\;S}\;.$$
Alternatively, the function $\psi$ is defined recursively by
$$\psi(X(S_1,\ldots,S_n))= \set{L}\Union\: (\Union_{i=1,...,n} \psi(S_i)).$$

\begin{example}
Consider the construct $S$ in Example \ref{4.19}, the associated tubing is:
\begin{equation*}
\psi(S)=\{\{1,2,3,4,5\}, \{1,2,4,5\}, \{5\}, \{2\}\}.
\end{equation*}

\end{example}

We need one definition (adapted to the setting of hypergraphs from \cite{RoncoGTO}). With each $t$ connected in $\hyper{L}$ ($t$ is also called a {\em tube}), we associate a construct $t_{\hyper{H}}$ as follows (note the heterogeneous nature of this definition: we go from tubes to constructs):
\begin{itemize}
\item If $H\inc t$, then we set $t_{\hyper{H}}=H$;
\item if $H\backslash t\neq\emptyset$ yielding $\hyper{H},(H\backslash t)\leadsto \hyper{H}_1,\ldots,\hyper{H}_k$, we set $t_{\hyper{H}}=(H\backslash t)(H_1,\ldots,H_k)$. 
\end{itemize}
This definition can be seen as an instantiation of our definition of $\restrconstr{S}{\hyper{H}}$: more precisely, we can coerce a tube $t$ of $\hyper{L}$ to a construct  $(L\backslash t)(t):\hyper{L}$, and we have 
$t_{\hyper{H}} = \restrconstr{((L\backslash t)(t))}{\hyper{H}}$.

The following lemma asserts that  $\restrconstr{S}{\hyper{H}}$, viewed as a tubing, is entirely determined by the restrictions of the tubes of $S$, thus providing a non-recursive definition for this restriction operation. 

\begin{figure}
\begin{center}
\includegraphics[scale=0.5]{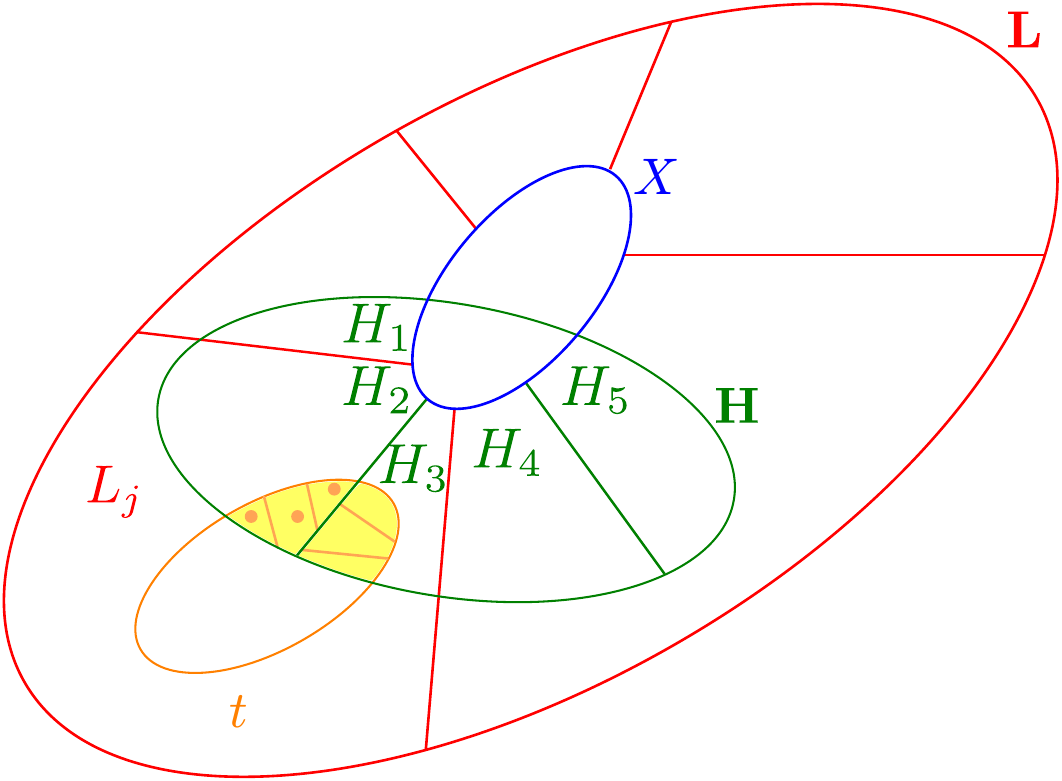} 
\end{center}
{\small \begin{itemize}
\item The compartments with red/blue border are the connected components of $\hyper{L}\backslash X$. 
\item The compartments with green/red/blue border are the connected components of $\hyper{H}\backslash X$.
\item In this example, we have $(\varphi_X^{\hyper{H},\hyper{L}})^{-1}(j)=\set{2,3}$.
\item The small yellow compartments with orange/green borders feature the tubes in $\psi(t_{\hyper{H}})$, 
\item while those additionally marked with a dot are the tubes in $\psi(t_{\hyper{H}_2})$.
\end{itemize}}
\caption{Illustration of the proof of Lemma  \ref{restrconst-as-tubing} \label{restrconst-as-tubing-illust}}
\end{figure}

\begin{lemma} \label{restrconst-as-tubing}
For $\hyper{H},\hyper{L}$ and $S$ as above, we have $\psi(\restrconstr{S}{\hyper{H}})=
\bigcup\setc{\psi(t_{\hyper{H}})}{t\in\psi(S)}$.
\end{lemma}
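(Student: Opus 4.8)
The plan is to prove the identity $\psi(\restrconstr{S}{\hyper{H}}) = \bigcup\setc{\psi(t_{\hyper{H}})}{t\in\psi(S)}$ by induction on the pair $(|H|, |L|)$, following exactly the case split in the recursive definition of $\restrconstr{S}{\hyper{H}}$. Write $S = X(S_1,\ldots,S_n)$ with $S_i : \hyper{L}_i$ where $\hyper{L},X\leadsto\hyper{L}_1,\ldots,\hyper{L}_n$, and recall $\psi(S) = \set{L} \cup \bigcup_{i}\psi(S_i)$. So the right-hand side is $\psi(L_{\hyper{H}}) \cup \bigcup_i \bigcup\setc{\psi(t_{\hyper{H}})}{t\in\psi(S_i)}$, and by the inductive hypothesis applied to each $(S_i, \hyper{H}\cap L_i, \hyper{L}_i)$ (once we identify the correct restriction of $\hyper{H}$ to use), the inner unions become $\psi(\restrconstr{(S_i)}{\hyper{H}_i})$ for the appropriate pieces $\hyper{H}_i$ of $\hyper{H}$.

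\emph{Case $X\cap H=\emptyset$.} Here $L_{\hyper{H}} = H$ (since $H\subseteq L$), so $\psi(L_{\hyper{H}}) = \set{H}$. There is a unique $j$ with $H\subseteq L_j$, and for $i\neq j$ no tube $t\in\psi(S_i)$ contains a vertex of $H$, hence $H\backslash t = H$ and $t_{\hyper{H}} = H(\ )$, contributing only $\set{H}$ to the union — already accounted for. For $i=j$ we have $\hyper{H}\subseteq\hyper{L}_j$ and every edge of $\hyper{H}$ is connected in $\hyper{L}_j$ (it is connected in $\hyper{L}$ by hypothesis and lies inside $L_j$), so the induction hypothesis applies to $(S_j,\hyper{H},\hyper{L}_j)$ and gives $\bigcup\setc{\psi(t_{\hyper{H}})}{t\in\psi(S_j)} = \psi(\restrconstr{(S_j)}{\hyper{H}}) = \psi(\restrconstr{S}{\hyper{H}})$; the extra $\set{H}$ on the right is harmless because $H = \valup(\mbox{root})$ already belongs to $\psi(\restrconstr{(S_j)}{\hyper{H}})$. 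This closes this case.

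\emph{Case $X\cap H\neq\emptyset$.} Let $\hyper{H},(X\cap H)\leadsto\hyper{H}_1,\ldots,\hyper{H}_p$, with $\varphi := \varphi_X^{\hyper{H},\hyper{L}}:\set{1,\ldots,p}\to\set{1,\ldots,n}$, so $\restrconstr{S}{\hyper{H}} = (X\cap H)(\ldots,\restrconstr{(S_{\varphi(i)})}{\hyper{H}_i},\ldots)$, hence $\psi(\restrconstr{S}{\hyper{H}}) = \set{H}\cup\bigcup_{i=1}^p \psi(\restrconstr{(S_{\varphi(i)})}{\hyper{H}_i})$. On the right-hand side, $L_{\hyper{H}} = (H\backslash L)(\ldots) $ — but $H\subseteq L$ so $H\backslash L = \emptyset$, giving $L_{\hyper{H}} = H$ and $\psi(L_{\hyper{H}}) = \set{H}$; this matches the $\set{H}$ above. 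It remains to match $\bigcup_{j=1}^n\bigcup\setc{\psi(t_{\hyper{H}})}{t\in\psi(S_j)}$ with $\bigcup_{i=1}^p\psi(\restrconstr{(S_{\varphi(i)})}{\hyper{H}_i})$. The key bookkeeping fact, which I would isolate as the crux, is that for a tube $t$ with $t\subseteq L_j$: if $j = \varphi(i)$ for some (necessarily unique) $i$, then $H_i\backslash t$ equals the intersection with $H_i$ of $H\backslash(X\cap H)$ restricted appropriately, so that $t_{\hyper{H}}$ "restricted to the $i$-th compartment" coincides with $t_{\hyper{H}_i}$ — more precisely one shows $\psi(t_{\hyper{H}}) = \set{H}\cup\bigcup_{i : \varphi(i)=j}\psi(t_{\hyper{H}_i})$ by unfolding the definition of $t_{\hyper{H}}$ and using that $\hyper{H},(H\backslash t)\leadsto$ refines into the components of the various $\hyper{H}_i,(H_i\backslash t)$ together with the components $\hyper{H}_{i'}$ with $\varphi(i')\neq j$ (those $i'$ for which $H_{i'}\cap t=\emptyset$). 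Then, applying the induction hypothesis to each $(S_{\varphi(i)},\hyper{H}_i,\hyper{L}_{\varphi(i)})$ — valid since $H_i\subseteq L_{\varphi(i)}$ and edges of $\hyper{H}_i$ are edges of $\hyper{H}$, hence connected in $\hyper{L}$ and contained in $L_{\varphi(i)}$, hence connected in $\hyper{L}_{\varphi(i)}$ — yields $\bigcup\setc{\psi(t_{\hyper{H}_i})}{t\in\psi(S_{\varphi(i)})} = \psi(\restrconstr{(S_{\varphi(i)})}{\hyper{H}_i})$, and summing over $i$ and over $j$ (noting tubes $t\in\psi(S_j)$ with $\varphi^{-1}(j)=\emptyset$ contribute only $\set{H}$) gives exactly the desired equality.

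The main obstacle is the verification in the last case of the compartment-by-compartment identity $\psi(t_{\hyper{H}}) = \set{H}\cup\bigcup_{i:\varphi(i)=j}\psi(t_{\hyper{H}_i})$: one must carefully track how the connected components of $\restrH{H}{t}$ distribute among the components $\hyper{H}_i$ of $\restrH{H}{(X\cap H)}$, using that $t$ is a tube of $\hyper{L}$ with $t\subseteq L_j = \valup$ of one connected component of $\restrH{L}{X}$, and that $H\cap t$ being connected in $\hyper{H}$ (it is connected in $\hyper{L}$ and all its edges, being edges of $\hyper{H}$ in $H\subseteq L$, remain in $\hyper{H}$ by hypothesis) forces $H\cap t$ to sit inside a single $H_i$. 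Figure~\ref{restrconst-as-tubing-illust} is the right picture to keep in mind throughout; the rest is routine unfolding of the recursive definitions of $\psi$, $\restrconstr{\cdot}{\hyper{H}}$, and $t_{\hyper{H}}$.
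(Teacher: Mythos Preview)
Your approach is essentially the paper's own: induction along the recursive definition of $\restrconstr{S}{\hyper{H}}$, isolating the ``compartment identity'' $\psi(t_{\hyper{H}}) = \{H\}\cup\bigcup_{i:\varphi(i)=j}\psi(t_{\hyper{H}_i})$ for $t\in\psi(S_j)$ as the heart of the inductive step, and then summing over $j$ and over $t$. Your two-case split and the handling of Case~1 are fine.

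There is, however, a genuine slip in your final paragraph. You claim that $H\cap t$ is connected in $\hyper{H}$ (arguing via connectedness in $\hyper{L}$) and that this forces $H\cap t$ to lie in a single $H_i$. Both assertions are false in general. Take $\hyper{L}$ the complete graph on $\{1,\dots,5\}$, $\hyper{H}$ the path $1\text{--}2\text{--}3\text{--}4\text{--}5$, $X=\{3\}$ and $t=\{1,2,5\}$: then $H\cap t=\{1,2,5\}$ has two connected components in $\hyper{H}$, spread over $H_1=\{1,2\}$ and $H_2=\{4,5\}$. The flaw in your reasoning is that connectedness of $t$ in $\hyper{L}$ says nothing about connectedness of $H\cap t$ in $\hyper{H}$ (the witnessing hyperedges of $\hyper{L}$ need not lie in $\hyper{H}$ at all).

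Relatedly, the compartment identity as you (and the paper's sketch) state it is not literally an equality for a single tube $t$: in the same example one finds $\{4,5\}\in\psi(t_{\hyper{H}_2})$ but $\{4,5\}\notin\psi(t_{\hyper{H}})$. What is true pointwise is $\psi(t_{\hyper{H}})\setminus\{H\}=\bigcup_{i:\varphi(i)=j}\bigl(\psi(t_{\hyper{H}_i})\setminus\{H_i\}\bigr)$, which follows simply because the connected components of $\hyper{H}_{H\cap t}$ are exactly the connected components of the various $(\hyper{H}_i)_{H_i\cap t}$ (no path inside $H\cap t$ can cross between distinct $H_i$'s, since $X\cap H$ is disjoint from $t$). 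The stray $H_i$ terms are harmless once you union over all $t\in\psi(S_j)$, since $L_j\in\psi(S_j)$ and $(L_j)_{\hyper{H}_i}=H_i$ supplies them on both sides; but your argument should rest on this component-counting, not on the incorrect connectedness claim.
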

\begin{proof} (Sketch) Let $S=X(S_1,\ldots,S_n)$ and $L\neq t\in\psi(S)$, i.e., $t\in\psi(S_j)$ for some $j$. Then the statement follows from the  observation (illustrated in Figure \ref{restrconst-as-tubing-illust}) that, with the notation introduced above:
$$\psi(t_{\hyper{H}})= \bigcup\setc{\psi(t_{\hyper{H}_i})}{\varphi_X^{\hyper{H},\hyper{L}}(i)=j} \quad (j,t\in\psi(S_j)\;\mbox{fixed}, i\;\mbox{varying}).$$
Indeed, by definition of $\psi$, we have on one hand that  $(\bigcup\setc{\psi(t_{\hyper{H}})}{t\in\psi(S)})\backslash\set{H}$ is the union of the sets
$(\bigcup\setc{\psi(t_{\hyper{H}})}{t\in\psi(S_j)})$, indexed by $1\leq j\leq n$. On the other hand, applying induction, we have that $\psi(\restrconstr{S}{\hyper{H}})\backslash\set{H}$ is the union of the sets $\psi(t_{\hyper{H}_i})$, for $1\leq i\leq p$ and $t\in \psi(S_{\varphi_X^{\hyper{H},\hyper{L}}(i)})$, which we can repackage as a union indexed by  $j$ (gathering all $i$ such that $\varphi_X^{\hyper{H},\hyper{L}}(i)= j$).  We then conclude by the observation.

\end{proof}
In particular, via the characterization of tubings as constructs, the lemma says that the definition in terms of tubings given in \cite{RoncoGTO}   returns indeed a tubing.

\smallskip
We now come back to the promised alternative definition of the   product. 
Let $\tau=(\{{\bf H}_a\,|\,a\in A\},{\bf H})$ be a team and $U=X(U_1,\ldots,U_n)$ be a construct of $\hyper{H}$. We  associate with $U$ a ``measure''  $\mu^\tau(U)$ as follows (with the notation of \S\ref{tcd}).  We set $B=\setc{b\in A}{X\cap H_b\neq\emptyset}$ and $X_b=X\cap H_b$ for each $b\in B$ (so that $n=n_B$), and we set
$$\mu^\tau(U)=(|B|-1)+ \sum_{1\leq i\leq n_B}\mu^{\tau_i}(U_i).$$The following proposition gives a non-inductive characterization of our product $\ast$.
\begin{proposition}
Let $\delta=(\{C_a:{\bf H}_a\,|\, a\in A\},{\bf H})$ be a delegation of support $\tau$. Then we have:
$$\ast(\delta)= \sum_{U:\hyper{H} \;\mbox{{\small and}}\; \forall a\in A, \restrconstr{U}{\hyper{H}_a}=C_a} q^{\mu^\tau(U)}\, U,$$
and for each $\emptyset\neq B\inc A$, we have that $q^{|B|-1} (\ast_B(\delta))$ is the summand of the above sum where $U$  is further constrained to be such that $\mbox{root}(U)=X_B$.
\end{proposition}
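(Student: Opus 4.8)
The plan is to prove the identity by induction on $|H|$, matching the recursive definition \eqref{shuffle-def} of $\ast(\delta)$ against the claimed non-recursive sum term by term, using the stratification of constructs $U:\hyper{H}$ by their root. First I would fix the delegation $\delta=(\{C_a:{\bf H}_a\,|\,a\in A\},{\bf H})$ of support $\tau$, with $\mathrm{root}(C_a)=X_a$, and observe that every construct $U:\hyper{H}$ has a well-defined root $Y=\mathrm{root}(U)$, which induces $B(U)=\setc{b\in A}{Y\cap H_b\neq\emptyset}$ and $X_b=Y\cap H_b$; conversely a nonempty $B\inc A$ together with the constraint $\restrconstr{U}{\hyper{H}_a}=C_a$ forces $Y=X_B:=\Union_{b\in B}X_b$, since by the definition of $\restrconstr{\cdot}{\hyper{H}_a}$ the root of $\restrconstr{U}{\hyper{H}_a}$ is exactly $Y\cap H_a$ when that is nonempty (and $C_a$'s root is $X_a\neq\emptyset$, so $b\in B$ iff $X_a=Y\cap H_a$). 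This gives the desired ``moreover'' statement once the main identity is established: the $U$'s contributing to $\ast_B(\delta)$ are precisely those with $\mathrm{root}(U)=X_B$ and $\restrconstr{U}{\hyper{H}_a}=C_a$ for all $a$.

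Next I would carry out the inductive step. Fix $\emptyset\neq B\inc A$. By Lemma~\ref{rooted-linear-construct-in-product}, $\ast_B(\delta)=X_B(\ast(\delta^B_1),\dots,\ast(\delta^B_{n_B}))$ where $\hyper{H},X_B\leadsto\hyper{H}^B_1,\dots,\hyper{H}^B_{n_B}$ and $\delta^B_i$ is the sub-delegation on $\hyper{H}^B_i$ built from the constructs $C_{\tilde a}$ ($\tilde a\in\tilde A$, $\varphi^B_\tau(\tilde a)=i$) — here $C_b=X_b(C_{(b,1)},\dots,C_{(b,n_b)})$ for $b\in B$. The support of $\delta^B_i$ is exactly $\tau_i$ from $\tau,X_B\clandec\tau_1,\dots,\tau_{n_B}$. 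Applying the induction hypothesis (legitimate since $|H^B_i|<|H|$) to each $\delta^B_i$, I get
$$\ast(\delta^B_i)=\sum_{U_i:\hyper{H}^B_i,\ \forall\tilde a\ \restrconstr{(U_i)}{\hyper{H}_{\tilde a}}=C_{\tilde a}} q^{\mu^{\tau_i}(U_i)}\,U_i.$$
Now the key combinatorial claim: for $U=X_B(U_1,\dots,U_{n_B}):\hyper{H}$, the condition $\restrconstr{U}{\hyper{H}_a}=C_a$ for all $a\in A$ is equivalent to ($B(U)=B$, i.e. $X_B\cap H_a = X_a$ for $a\in B$ and $X_B\cap H_a=\emptyset$ for $a\notin B$) together with $\restrconstr{(U_i)}{\hyper{H}_{\tilde a}}=C_{\tilde a}$ for all $i$ and all $\tilde a$ with $\varphi^B_\tau(\tilde a)=i$. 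This is where I unwind the two-case recursive definition of $\restrconstr{\cdot}{\hyper{H}_a}$: for $a\in B$ the "$X\cap H\neq\emptyset$" branch applies at the root and peels $X_B\cap H_a=X_a$ off, leaving restrictions of the $U_i$'s onto the components of $\hyper{H}_a\backslash X_a=\hyper{H}_a\backslash(\text{root of }C_a)$, which must match $C_{(a,1)},\dots,C_{(a,n_a)}$; for $a\notin B$ the "$X\cap H=\emptyset$" branch drops into the unique component containing $H_a$, i.e. into the appropriate $U_i$, and the condition becomes $\restrconstr{(U_i)}{\hyper{H}_a}=C_a$. A compatibility check using the projection $\pi:\tilde A\to A$ and the fact (Lemma~\ref{Jovana-lemma}) that each $H_{\tilde a}$ lies in a single component shows these local conditions reassemble exactly into the $\delta^B_i$-constraints. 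Finally $\mu^\tau(U)=(|B|-1)+\sum_i\mu^{\tau_i}(U_i)$ directly from the definition of $\mu^\tau$ (noting that $B=\setc{b\in A}{\mathrm{root}(U)\cap H_b\neq\emptyset}$ since $\mathrm{root}(U)=X_B$), so that $q^{|B|-1}\ast_B(\delta)=\sum_{U:\ \mathrm{root}(U)=X_B,\ \restrconstr{U}{\hyper{H}_a}=C_a}q^{\mu^\tau(U)}U$. Summing over $\emptyset\neq B\inc A$ and using \eqref{shuffle-def} gives the proposition, since every $U:\hyper{H}$ with $\restrconstr{U}{\hyper{H}_a}=C_a$ for all $a$ has $\mathrm{root}(U)=X_{B(U)}$ for a unique nonempty $B(U)$.

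The main obstacle I expect is the careful bookkeeping in the key combinatorial claim: verifying that the root-peeling in the definition of $\restrconstr{\cdot}{\hyper{H}_a}$ for $a\in B$ produces exactly the children constructs $C_{(a,i)}$ and that the reindexing via $\varphi^B_\tau$ is consistent across all $a\in A$ simultaneously — in particular that the function $\varphi_X^{\hyper{H}_a,\hyper{H}_a}$ implicit in restricting to components of $\hyper{H}_a\backslash X_a$ is compatible with $\varphi^B_\tau$ restricted to the fiber over $a$. This is the same kind of "standard argumentation with connected components" invoked in the proof of Theorem~\ref{assoc_strict}, and strictness (Lemma~\ref{Jovana-lemma}) is exactly what guarantees the components behave well under the simultaneous restrictions. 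The rest — the power-of-$q$ count and the outer/inner linearity needed to handle $\ast(\delta^B_i)$ as a linear construct plugged under $X_B(\dots)$ — is routine given Lemmas~\ref{rooted-linear-construct-in-product} and~\ref{linear-construct-in-product}.
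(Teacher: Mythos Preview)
Your proposal is correct and follows essentially the same approach as the paper's (sketch of) proof: both argue by showing that the non-recursive sum, stratified by $\mathrm{root}(U)=X_B$, satisfies the defining recursion \eqref{shuffle-def}, via the factorization of the constraints $\restrconstr{U}{\hyper{H}_a}=C_a$ into the per-component constraints $\restrconstr{(U_i)}{\hyper{H}_{\tilde a}}=C_{\tilde a}$ together with the additivity $\mu^\tau(U)=(|B|-1)+\sum_i\mu^{\tau_i}(U_i)$. Your write-up is simply more explicit than the paper's about the root-peeling bookkeeping (the two cases $a\in B$ versus $a\notin B$ in the definition of $\restrconstr{\cdot}{\hyper{H}_a}$) and about why $\mathrm{root}(U)$ is forced to equal $X_B$; the paper compresses all of this into the phrase ``the constraints on $U$ boil down to the constraints $\restrconstr{(U_i)}{\hyper{H}_{\tilde a}}=C_{\tilde a}$''.
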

\begin{proof} (Sketch) We use the same notations as above. By unfolding the definition of $U=X(U_1,\ldots,U_n)$, with $X=X_B$, the constraints on $U$ boil down to the constraints (for each $i$) $\restrconstr{(U_i)}{\hyper{H}_{\tilde{a}}}=C_{\tilde{a}}$  for all $\tilde{a}\in\tilde{A}$ such that
$\varphi^{\tau}_B(\tilde{a})=i$. This entails that, taking the right-hand side of the equality and its summands   in the statement as a definition of $\ast$ and $\ast_B$, and noticing that
\begin{equation*}
q^{\mu^\tau(U)}\, X_B(U_1,\ldots,U_n)= q^{|B|-1}\, X_B(\ldots,q^{\mu^{\tau_i}(U_i)}\, U_i,\ldots),
\end{equation*}
 these definitions satisfy the equation $\ast_B(\delta)=(\Union_{b\in B}X_b)(\ast(\delta^B_1),\dots,\ast(\delta^B_{n_{B}})$.
\end{proof}

\begin{example}
We consider the delegation of friezohedra $$\delta=(\{3(1,5)\!:\!\hyper{F}_{\{1,3,5\}}\,,\, 4(2)\!:\!\hyper{F}_{\{2,4\}}\},\hyper{F}_{\{1,2,3,4,5\}}).$$ The   shuffle product of $\delta$ is then given, up to some coefficients, by the sum of all the constructs $U$ of $\hyper{F}_{\{1,\ldots,5\}}$ such that  
$\restrconstr{U}{\hyper{F}_{\{1,3,5\}}} = 3(1,5)$ and $\restrconstr{U}{\hyper{F}_{\{2,4\}}} = 4(2)$. The power of $q$ in the coefficient of $S=3(14(2,5))$ in this sum is given by:
\begin{multline}
\mu^{(\{\hyper{F}_{\{1,3,5\}}, \hyper{F}_{\{2,4\}}\},\hyper{F}_{\{1,2,3,4,5\}})}(S)= (1-1)+ \mu^{(\{\hyper{F}_{\{1\}}, \hyper{F}_{\{5\}}, \hyper{F}_{\{2,4\}}\},\hyper{F}_{\{1,2,4,5\}})}(14(5(2)) \\
= (2-1)+ \mu^{(\{\hyper{F}_{\{2\}}\},\hyper{F}_{\{2\}})}(2) +\mu^{(\{\hyper{F}_{\{5\}}\},\hyper{F}_{\{5\}})}(5)  \\
= 1+ (1-1) + (1-1)=1. \\
\end{multline} 
\end{example}

We note that the non-recursive definition leads to another proof of the polydendriform equation and of associativity -- that is technically more simple but geometrically less appealing than the one we gave in \S\ref{asp} --, based on the observation, say for $(\{{\bf H}_1,{\bf H}_2,{\bf H}_3\},{\bf H})$, $(\{{\bf H}_1,{\bf H}_2\},{\bf H}_{12})$,  $(\{{\bf H}_{12},{\bf H}_3\},{\bf H})$, and $$\delta=(\{S\!:\!{\bf H}_1\,,\, T\!:\!{\bf H}_2\,,\,U\!:\!{\bf H}_3\}\,,\,{\bf H}),$$
 that the data of $V:\hyper{H}$ such that $\restrconstr{V}{\hyper{H}_1}=S$, $\restrconstr{V}{\hyper{H}_2}=T$ and 
 $\restrconstr{V}{\hyper{H}_3}=U$ is equivalent to the data of $V:\hyper{H}$ and $W:\hyper{H}_{12}$ such that $\restrconstr{W}{\hyper{H}_1}=S$, $\restrconstr{W}{\hyper{H}_2}=T$, $\restrconstr{V}{\hyper{H}_{12}}=W$, and $\restrconstr{V}{\hyper{H}_3}=U$\footnote{In turn, this observation relies on the composability of restrictions, i.e., one can prove that 
 $(\restrconstr{V}{\hyper{H}_{12}})\restrconstr{{}}{\hyper{H_1}}= \restrconstr{V}{\hyper{H}_1}$.}.

\subsection{Extending the   framework}\label{semi-strict-subsection} 

In this subsection, we enlarge the coverage of our formalism of teams and clans, and we adapt the   product accordingly, in order to cover other families of polytopes like simplices, hypercubes, or erosohedra. 

\smallskip

A preteam $\tau=(\{{\bf H}_a\,|\,a\in A\},{\bf H})$ is called a {\it semi-strict team} if  for each choice of a   subset $\emptyset\neq B\inc A$ and of a subset $\emptyset\neq X_b\subseteq H_b$ for each $b\in B$, we have that, for each $\tilde{a}\in \tilde{A}$,  

\smallskip
(1) $H_{\tilde{a}}$ is included  in a connected component of $\restrH{H}{(\Union_{b\in B}X_b)}$, or

(2) $|H_{\tilde{a}}|\geq 2$, and, for all  $x\in H_{\tilde{a}}$, $\set{x}$ is a connected component of  $\restrH{H}{(\Union_{b\in B}X_b)}$,

\smallskip
\noindent
where $\tilde{A}$ is as in \S\ref{tcd}.  When (2) applies (and vacuously when $|H_{\tilde{a}}|=1$), we say that $\hyper{H}_{\tilde{a}}$ is {\em dissolved} in $\restrH{H}{(\Union_{b\in B}X_b)}$.
Let us denote with $\tilde{A}_d$ the set of elements $\tilde{a}$ of $\tilde{A}$ such that  case (2) applies.
We  define 
$\overline{A}$ by removing from $\tilde{A}$ all elements $\tilde{a}$ of $\tilde{A}_d$ and replacing them by the elements of $H_{\tilde{a}}$ (thus expressing the atomisation of $\hyper{H}_{\tilde{a}}$), for all $\tilde{a}\in \tilde{A}_{d'}$, i.e., 
$\overline{A}:=(\tilde{A}\backslash \tilde{A}_d) + \union_{\tilde{a}\in \tilde{A}_d} H_{\tilde{a}}$.
The whole situation determines a partition $\overline{A}=\overline{A}_1\union\ldots\union \overline{A}_{n_B}$, and $n_B$ preteams $\tau_i =(\setc{\hyper{H}_{\overline{a}}}{\overline{a}\in \overline{A}_i},\hyper{H}_i)$, where $\hyper{H}_{\overline{a}}$ is defined on the new elements 
$\overline{x}\in  \union_{\tilde{a}\in \tilde{A}_d} H_{\tilde{a}}$
as $\hyper{H}_{\overline{x}}=\set{\set{\overline{x}}}$.
We still use the notation 
$\tau,\Union_{b\in B}X_b\clandec \tau_1,\ldots,\tau_{n_B}$.
The definition of clan is unchanged, except that a clan now consists of semi-strict teams and not of strict teams.
The definition of the    product  is adapted as follows. We assign a  construct $C_{\overline{a}}$ of $\hyper{H}_{\overline{a}}$ for all $\overline{a}\in \overline{A}$, via the following adjustment with respect to the strict case: if $\overline{x}$ is an element of $H_{\tilde{a}}$ for some $\tilde{a}\in \tilde{A}_d$, then we set $C_{\overline{x}}=\set{\overline{x}}$, and we finish as in the strict case: the assignment determines delegations
$\delta_i^B$  $(1\leq i\leq n_B)$, and we define the product exactly as in (\ref{shuffle-def}), but setting $q=-1$ (see below).
 
\smallskip
We can still define a function $\varphi_{\tau}^B$ from $\tilde{A}\backslash\tilde{A}_d$ to $\set{1,\ldots,n_B}$, which we prefer to see as a partial function from $\tilde{A}$ to $\set{1,\ldots,n_B}$. Abusing notation, we can still write (cf. (\ref{deltas}))
$\delta^B_i=(\{C_{\tilde{a}}:{\bf H}_{\tilde{a}}\,|\, \tilde{a}\in \tilde{A} \mbox{ and }\varphi_\tau^B(\tilde{a})=i\},{\bf H}_i^B),$
noticing  that the  participating hypergraphs of  $\tau_i$ that are not the hypergraphs  ${\bf H}_{\tilde{a}}$ with 
$\tilde{a}\in (\varphi_\tau^B)^{-1}(i)$
are all singleton graphs, so that the sloppy notation above extends in a unique way to the ``true'' definition of $\delta^B_i$.
Note however that our abuse of notation is not as innocent as it seems, since the convention relies on the fact that a singleton hypergraph 
$\set{\set{a}}$ admits a unique {\it plain} construct $a$. 
But the same hypergraph admits all $\lambda a$ ($\lambda\in\Bbbk$) as {\it linear} constructs -- a fact that is stressed in the following remark.

\begin{remark} \label{dissolution-irrelevance}
It follows from the definitions that
if $\delta_1$ and $\delta_2$ are delegations of plain constructs having the same support $\tau=(\setc{\hyper{H}_a}{a\in A},\hyper{H})$, if $\delta_1$ and $\delta_2$ differ only on one participating hypergraph $\hyper{H}_{a_0}$,  if $B$ is a non-empty subset of $A$ such that $a_0\nin B$ and $\varphi_\tau^B(a_0)$ is undefined, then $\ast(\delta_1)=\ast(\delta_2)$.  Moreover, if $\delta$ is a (linear) delegation which coincides with $\delta_1$ and $\delta_2$ on all $a\in A\backslash\set{a_0}$ and has in position $a_0$ a linear construct $\sum_{i\in I} \lambda_i C_i$, then we have $\ast(\delta) = (\sum_{i\in I} \lambda_i) \ast\!(\delta_1) \; (= (\sum_{i\in I} \lambda_i) \ast\!(\delta_2))$.
\end{remark}

\smallskip

The notion of associative clan is unchanged. 
The associativity theorem still holds, but only under the assumption $q\!=\!-1$.  The reason for this restriction stems from Remark \ref{dissolution-irrelevance} and from the following lemma.

\begin{lemma} \label{sum-coeff1} If $q\!\!=\!\!-1$, then, for any delegation (in the semi-strict setting) $\delta$, the sum of all coefficients in the expansion of  $\ast(\delta)$ as a linear combination of plain constructs is equal to $1$.
\end{lemma}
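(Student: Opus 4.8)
The plan is to introduce the \emph{augmentation} $\varepsilon$, the linear map sending a linear construct $\mathbf{C}=\sum_{i\in I}\lambda_i C_i$, expressed in the basis of plain constructs, to the scalar $\sum_{i\in I}\lambda_i$; the quantity in the statement is then precisely $\varepsilon(\ast(\delta))$, so the claim becomes $\varepsilon(\ast(\delta))=1$. The first step is to record that $\varepsilon$ is multiplicative for grafting, i.e.\ $\varepsilon\big(X(\mathbf{C}_1,\dots,\mathbf{C}_n)\big)=\prod_{j=1}^{n}\varepsilon(\mathbf{C}_j)$. This holds because $X(-,\dots,-)$ is, by definition, multilinear in each slot, and because the map $(T_1,\dots,T_n)\mapsto X(T_1,\dots,T_n)$ on tuples of plain constructs is injective: the connected components $\hyper{H}_1,\dots,\hyper{H}_n$ of $\restrH{H}{X}$ have pairwise disjoint carriers, so the $j$-th summand subtree of $X(T_1,\dots,T_n)$ (the one living on $H_j$) recovers $T_j$. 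Hence the multilinear expansion of $X(\mathbf{C}_1,\dots,\mathbf{C}_n)$ produces no cancellation among distinct plain constructs, and the sum of its coefficients equals $\prod_j\varepsilon(\mathbf{C}_j)$.

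Next I would argue by induction on $|H|$, where $\hyper{H}$ is the coordinating hypergraph of $\delta=(\{C_a:\hyper{H}_a\mid a\in A\},\hyper{H})$. Applying $\varepsilon$ to the defining identity \eqref{shuffle-def} and using linearity and multiplicativity yields
$$\varepsilon(\ast(\delta))=\sum_{\emptyset\incs B\subseteq A}q^{|B|-1}\prod_{i=1}^{n_B}\varepsilon\big(\ast(\delta^B_i)\big).$$
Each $\delta^B_i$ is again a delegation of plain constructs --- the singletons created by dissolution carry the plain construct $\{\overline{x}\}$, for which $\varepsilon=1$ --- with coordinating hypergraph $\hyper{H}^B_i$ a connected component of $\restrH{H}{(\Union_{b\in B}X_b)}$; since $B$ and every $X_b$ are non-empty, $\Union_{b\in B}X_b\neq\emptyset$, so $|H^B_i|<|H|$ and the induction hypothesis gives $\varepsilon(\ast(\delta^B_i))=1$. (When $|H|=1$ one has $A=\{a_0\}$, only $B=\{a_0\}$ occurs with $n_B=0$, and the empty product is $1$, so the displayed line already covers the base case.) Thus $\varepsilon(\ast(\delta))=\sum_{\emptyset\incs B\subseteq A}q^{|B|-1}$, and for $q=-1$ this equals $-\sum_{\emptyset\incs B\subseteq A}(-1)^{|B|}=-\big((1-1)^{|A|}-1\big)=1$, since $A\neq\emptyset$.

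The only delicate point is the multiplicativity of $\varepsilon$, and within it the no-cancellation claim; once that is settled, the rest is a short induction whose punchline is the inclusion--exclusion identity $\sum_{\emptyset\incs B\subseteq A}(-1)^{|B|-1}=1$. This computation also explains why $q$ is forced to be $-1$ in the semi-strict setting: for any other value the sum $\sum_{\emptyset\incs B\subseteq A}q^{|B|-1}$ depends on $|A|$, so the conclusion --- and, via Remark~\ref{dissolution-irrelevance}, the associativity theorem --- would break down.
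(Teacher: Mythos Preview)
Your proof is correct and follows essentially the same approach as the paper: induction on $|H|$, using multilinearity of the grafting operation to reduce to the inductive hypothesis, and concluding via the binomial identity $\sum_{\emptyset\incs B\subseteq A}(-1)^{|B|-1}=1$. You are simply more explicit than the paper in naming the augmentation map $\varepsilon$ and in justifying the no-cancellation step (injectivity of $(T_1,\dots,T_n)\mapsto X(T_1,\dots,T_n)$), which the paper subsumes under the word ``multilinearity''.
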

\begin{proof} We prove the statement by induction on $|H|$.
From the binomial expansion
 $(1+x)^n=\sum_{0\leq i\leq n}\binom{n}{i} x^i$ expressed as   $(1+x)^n=1 + x(\sum_{1\leq i\leq n}\binom{n}{i} x^{i-1})$
and instantiated with $x=q= -1$, we readily obtain $\sum_{1\leq i\leq n}\binom{n}{i} q^{i-1}=1$.  The statement will then follow if we prove that, for each $\emptyset\subset B\subseteq A$, the sum of the coefficients in the expansion of  
$X_B(\ast(\delta_1),\ldots, \ast(\delta_{n_B}))$ as a linear combination of plain constructs is equal to $1$. But this in turn follows by induction and by multilinearity.
 \end{proof}

\begin{theorem}\label{assoc_semi_strict}
Theorem \ref{assoc_strict} extends to the semi-strict setting for $q=-1$.
\end{theorem}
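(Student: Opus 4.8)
The plan is to revisit the proof of Theorem~\ref{assoc_strict} and check that every step survives the passage to the semi-strict setting, \emph{provided} we commit to $q=-1$. The overall skeleton is unchanged: we prove the polydendriform equation
$$\ast^{\tau''}_{B''}(\delta'')=\begin{cases}\ast^{\tau}_{B''}(\{C_a\,|\,a\in A\backslash\{a_0\}\}\cup\{\ast^{\tau'}(\delta')\}), & B''\subseteq A\backslash\{a_0\},\\[2pt]\ast^{\tau}_{B}(\{C_a\,|\,a\in A\backslash\{a_0\}\}\cup \{\ast^{\tau'}_{B'}(\delta')\}), & B''\not\inc A\backslash\{a_0\},\end{cases}$$
by induction on $|H|$, and then derive associativity by summing over all $B''$ with the weights $q^{|B''|-1}$, exactly as in the combinatorial matching $A_1=A_2$, $B_1=B_2$ at the end of that proof. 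The summation argument is insensitive to the value of $q$, so no change is needed there.

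First I would note which ingredients of the strict proof are ``structural'' and carry over verbatim, and which ones need adaptation. The indexing sets $\tilde A''$, $\tilde A$, $\widetilde{A'}$, the projections $\pi''$, $\pi$, and the partial functions $\varphi^{B''}_{\tau''}$, $\varphi^{\plunderline{B}}_\tau$, $\varphi^{\plunderline{B'}}_{\tau'}$ are now only \emph{partially} defined — a participating hypergraph $\hyper{H}_{\tilde a}$ that gets dissolved contributes atomised singletons to $\overline{A}$ rather than an index in $\{1,\dots,n_B\}$. The key point is that the identities \eqref{phi-phi-case1} and \eqref{phi-phi-case2} still hold as identities of \emph{partial} functions, because dissolution of $\hyper{H}_{\tilde a}$ is determined purely by how $H_{\tilde a}$ meets the connected components of $\restrH{H}{X_{B''}}$, and (by the same ``standard connectedness arguments'' as before, now using that $H_{(a_0,a')}\inc H_{a_0}$) this data is the same whether computed in $\tau''$ or along the two-step route through $\tau$ and $\tau'$. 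Lemmas~\ref{rooted-linear-construct-in-product} and~\ref{linear-construct-in-product} must be re-read in the semi-strict setting: their proofs only use that the $\delta^B_i$ depend on the constructs in the delegation solely through their roots, which remains true, so they extend without change (one only needs Remark~\ref{dissolution-irrelevance} to know that feeding a non-rooted or dissolved slot a linear construct $\sum_i\lambda_i C_i$ multiplies the output by $\sum_i\lambda_i$, which is consistent with the multilinear bookkeeping). The recursive calls then land on strictly smaller coordinating hypergraphs $\hyper{H}^{B''}_i$, so the induction hypothesis applies.

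The one genuinely new phenomenon — and the place where $q=-1$ is forced — is the treatment of dissolved components. When $\hyper{H}_{\tilde a}$ is dissolved in $\restrH{H}{X_{B''}}$ but the corresponding component is split differently (or not at all) when we go through $\tau'$ first, the intermediate delegation $\delta$ carries in that slot not a plain construct but a \emph{linear} one, namely $\ast^{\tau'}(\delta')$ or $\ast^{\tau'}_{B'}(\delta')$ restricted appropriately; by Remark~\ref{dissolution-irrelevance} its contribution to $\ast_{B''}$ is governed only by the sum of its coefficients. Here Lemma~\ref{sum-coeff1} is decisive: with $q=-1$, that coefficient sum is $1$, so a dissolved linear construct behaves \emph{exactly} as if it were a single plain construct, and the two sides of the polydendriform equation again match slot by slot. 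I expect this — verifying that, under $q=-1$, every dissolved slot can be ``collapsed'' to a unit-weight plain construct on both sides simultaneously, so that the induction goes through — to be the main obstacle; it is the only step that genuinely uses the hypothesis, and it is exactly the reason the theorem fails for general $q$ (for $q\neq -1$ one would get spurious factors $\sum_i\binom{n}{i}q^{i-1}\neq 1$ attached to the atomised pieces). Once the polydendriform equation is established, associativity follows by the identical $A_1=A_2$, $B_1=B_2$ argument, completing the proof.
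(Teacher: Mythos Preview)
Your proposal is correct and follows essentially the same approach as the paper: re-run the proof of Theorem~\ref{assoc_strict} with the $\varphi$'s read as partial functions, observe that \eqref{phi-phi-case1} and \eqref{phi-phi-case2} still hold in this partial sense, and handle the new ``dissolved'' situations by combining Remark~\ref{dissolution-irrelevance} with Lemma~\ref{sum-coeff1} so that, at $q=-1$, a dissolved linear construct contributes with total weight~$1$ and thus behaves like a plain construct. The paper is only slightly more explicit in separating the subcases (distinguishing whether $\varphi_\tau^{B''}(a_0)$ is defined in case~(1), and tracking which $j$'s fall outside the domain of $\tilde\varphi_\tau^{B}$ in case~(2)), but your argument captures exactly the same mechanism.
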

\begin{proof}
Using the convention above of still defining the   product by appealing to the functions $\varphi_\tau^B$,  the proof of Theorem \ref{assoc_strict} goes through, as long as we do not use the totality of these functions. More precisely, the reasoning in case (1) unfolds without change until the equalities (\ref{phi-phi-case1}) included, which still hold but have now to be understood in the partial sense, i.e., the left-hand side is defined if and only if the right-hand side is defined, in which case they are equal. 

Then two subcases arise.
\begin{enumerate}
\item[(1a)] If $\varphi_\tau^{B''}(a_0)$ is defined, then we conclude case (1) by induction as in the proof of Theorem  \ref{assoc_strict}.
\item[(1b)] Suppose   (new case!) that  $\varphi_\tau^{B''}(a_0)$ is undefined.    Let $\ast^{\tau'}(\delta')=\sum_{i\in I}\lambda_i C_i$.  By Lemma \ref{sum-coeff1}, we have 
$\sum_{i\in I}\lambda_i=1$. Let $\delta'_i$ be the delegation obtained by replacing $\ast(\delta')$ by $C_i$ in $\delta$. By Remark \ref{dissolution-irrelevance}, we have 
$\ast_{B''}(\delta'_i)=\ast_{B''}(\delta'_j)$ for all $i,j$, and, calling  $D$  the common value, we have:
$$\ast_{B''}(\delta)= (\sum_{i\in I}\lambda_i) D = D.$$
On the other hand, by (\ref{phi-phi-case1}), we also have that 
$\varphi_{\tau''}^{B''}(a_0,a')$ is undefined (for all $a'\in A'$), and, again, $\ast^{\tau''}(\delta'')$ does not depend on the  constructs $C_{(a_0,a')}$.  
Moreover, observing that $\delta$ and $\delta''$  coincide on the indices $a\in A\backslash\set{a_0}$,  we get easily that
$\ast_{B''}(\delta'')$ is also equal to the common value $D$, which concludes this new case in the proof of associativity.
\end{enumerate}
Similarly, the reasoning in case (2) unfolds without change until the equalities (\ref{phi-phi-case2}) included, which again hold in the partial sense explained above. Let us repeat here the expressions for $ \ast((\delta'')^{B''}_{i})$ and  for $\ast(\delta_i^{\plunderline{B}})$ that we wrote at this point of the proof of Theorem \ref{assoc_strict}:
 $$\begin{array}{lll}
 \ast((\delta'')^{B''}_{i}) & = &  \ast(\underbrace{\ldots, C_y, \ldots}_{y\in \tilde{A}''\cap \tilde{A},\varphi_{\tau''}^{B''}(y)=i}\,\,\,\,\,\,,\:\ldots\:,\,\underbrace{\ldots, C_x, \ldots}_{x\in \tilde{A}',\, \varphi^{\plunderline{B'}}_{\tau'}(x)=j\in (\tilde{\varphi}_{\tau}^{\plunderline{B}})^{-1}(i)}  \,\,\,\;,\ldots)\\
\ast(\delta_i^{\plunderline{B}}) &=& \ast(\underbrace{\ldots ,C_{y},\ldots}_{y\in \widetilde{A''}\cap \tilde{A},\varphi_{\tau}^{\plunderline{B}}(y)=i}\,\,\,\,\,\,,\:\ldots\:,\,\underbrace{\ast(\ldots ,C_x,\ldots)}_{x\in \tilde{A}',\, \varphi^{\plunderline{B'}}_{\tau'}(x)=j\in (\tilde{\varphi}_{\tau}^{\plunderline{B}})^{-1}(i)}  \,\,\,\;,\ldots)
 \end{array}$$
 The   first expression is still correct, as it displays (with $i$ varying) all elements $y$ and $x$ in the domain of definition $\varphi_{\tau''}^{B''}$, and all constructs involved (the ones appearing explicitly and the ones that have been dissolved) are plain constructs.  The same remarks apply to the second expression, except for the fact that some dissolved constructs are not plain.
 Indeed, we have to look at the  situations $\underbrace{\ldots, C_x ,\ldots}_{x\in \tilde{A}',\, \varphi^{\plunderline{B'}}_{\tau'}(x)=j}$, where 
$\tilde{\varphi}_{\tau}^{\plunderline{B}}(j)$ is undefined.  Then, by (\ref{phi-phi-case2}), we have that  also  $\varphi_{\tau''}^{B''}(x)$ is undefined for all $x\in(\varphi_{\tau'}^{B'})^{-1}(j)$, and 
the corresponding $C_x$  (which are plain, as stressed above) are dissolved  in  $\ast_{B''}(\delta'')$ and hence do not make their way into $(\delta'')^{B''}_{i}$.
On the other hand, the linear constructs  $\ast(\ldots ,C_x,\ldots)$ (where $x$ ranges over $(\varphi_{\tau'}^{B'})^{-1}(j)$ for some $j$ not in the domain of definition of $\varphi_\tau^B$) appear in  $\ast_{B'}(\delta')$, and are also dissolved. 
It follows that the same as what we argued about the first expression  can be argued about  the second one, except for 
the ``trace'' left by the constructs $\ast(\ldots ,C_x,\ldots)$ not being plain constructs, which is taken care of by  reasoning as in case (1b).
Thus also the second expression is  still correct, and the proof of Theorem \ref{assoc_strict}  goes through to the end without change.
\end{proof}
We finish with examples of semi-strict clans that are not strict. 
\begin{example}
The universe formed by all simplices $\hyper{S}^X$ (for a  finite set $X$) gives rise to the semi-strict clan formed by all preteams of the form 
$(\setc{\hyper{S}^{X_a}}{a\in A},\hyper{S}^{\Union X_a})$ (for mutually disjoint $X_a$).  That this clan is not strict is easily checked: given a delegation of constructs $C_a$ and $B\incs A$, all constructs $C_a$ for $a\in A\backslash B$ are dissolved.
The   product instantiates as:

$$
\ast(Y_1(\ldots), Y_2(\ldots), \ldots, Y_n(\ldots)) = \sum_{\emptyset \neq J \subseteq [n]} (\cup_{j \in J} Y_b)(\ldots),
$$
where $(...)$ is a shortcut for a tuple of singletons.
We use this example to illustrate the need to choose $q\!=\!-1$ in the semi-strict setting.
Take $A=\set{a_1,a_2,a_3}$ and $Y_i\inc X_{a_i}$. Then, identifying constructs $Z(\ldots,z,\ldots)$ with their root $Z,$ we have
$\ast_{Y_1}(Y_1,Y_2,Y_3)= Y_1$. 
On the other hand, we have  
$$\ast_{Y_1}(Y_1,\ast(Y_2,Y_3))
 = \ast_{Y_1}(Y_1,Y_2)+ q \ast_{Y_1}(Y_1,Y_2\cup Y_3) + \ast_{Y_1}(Y_1,Y_3)
=  Y_1 +qY_1 +Y_1.$$
Therefore, the two expressions match if and only if $q\!=\!-1$.
\end{example}
\begin{example} \label{hypercube2-example}
 One checks easily that the universe formed by all   hypercubes $\hyper{C}^X$ ($X=\set{x_1<\cdots<x_n}$)  is ordered, and gives rise to the  semi-strict clan formed by all preteams of the form 
$(\{\hyper{C}^{X_1},\ldots,\hyper{C}^{X_n}\},\hyper{C}^{\Union X_i})$, where $\Union_{1\leq i\leq n} X_i$ is endowed with the  order in which  $X_1,\ldots,X_n$ form successive intervals. To illustrate the non-strictness, take the team $(\{\hyper{C}^{\set{x_1<x_2}},\hyper{C}^{\set{x_3<x_4}}\},\hyper{C}^{\set{x_1<x_2<x_3<x_4}})$, and remove $x_1$. Then all of  $\hyper{C}^{\set{x_3<x_4}}$ is dissolved in 
$\hyper{C}^{\set{x_1<x_2<x_3<x_4}}\backslash\set{x_1}=\hyper{S}^{\set{x_2,x_3,x_4}}$.

In the notation introduced at the end of \S\ref{reminders-hypergraph-section}, the tridendriform structure instantiates  as follows ($|v|$ stands for the length of $v$):
$$\begin{array}{llllll}
u \prec v = u \,(-^{|v|}) \\
u \bcdot (v_1 \, +\,  v_2) = u\, (-^{|v_1|})\,\pbullet\, v_2\\
u \succ (v_1 \,+ \, v_2) = \left\{\begin{array}{ll} (u \ast v_1) \,+ \,v_2 & (v_1\neq\epsilon)\\
u+v_2 & (v_1=\epsilon)
\end{array}\right. .
\end{array}$$
\end{example} 
As a last example in this subsection, we describe the $(-1)$-tridendriform products for erosohedra.
\begin{example} \label{eroso2}
Let us first recall that the family of erosohedra is given by:
\begin{align*}
 \hyper{E}^X=\setc{\setc{x_j}{j \neq i}}{1\leq i\leq n},
\end{align*} 
where $X=\{x_1, \ldots, x_n\}$,
and that the constructs of $\hyper{E}^X$ are of two shapes:
\begin{itemize}
\item $x(Y(z_1, \ldots, z_k))$, where $x$ and $z_i$ are singletons of size $1$ in $X$ and $Y$ is a subset of $X$
\item and $Y(z_1, \ldots, z_k)$, where $z_i$ are singletons of size $1$ in $X$ and $Y$ is a subset of $X$ of size at least $2$.
\end{itemize}
 Note that in the first case, $Y(z_1, \ldots, z_k)$ is a construct of a simplex, not of an erosohedron.  Therefore, we take as universe the union of the families of erosohedra and of simplices. Note also that if we order our sets $X$, then we get an ordered universe (and the same is a foritori true for the subuniverse of simplices).
The products on two constructs $S$ and $T$ are given by:
$$\begin{array}{llllll}
(Y(z_1, \ldots, z_k) \prec T) = Y(\ldots) \\
(x(Y(z_1, \ldots, z_k)) \prec T) = x(Y(\ldots)) + x(\mbox{root}(T)(\ldots)) - x((Y \cup \mbox{root}(T))(\ldots))\\\\
(S \succ Y(z_1, \ldots, z_k)) = Y(\ldots) \\
(S \succ x(Y(z_1, \ldots, z_k))) = x(Y(\ldots)) + x(\mbox{root}(S)(\ldots)) - x((Y \cup \mbox{root}(S))(\ldots))\\\\
(S \bcdot T) = (\mbox{root}(S) \cup \mbox{root}(T))(\ldots) 
\end{array}$$
where $(\ldots)$ stands for $(y_1, \ldots, y_p)$ where $\{y_i\}_{i=1}^p$ is the set of elements of $X$ not appearing elsewhere in the construct.
\end{example}
\section{Open questions}

In this section, we list some directions for future work.  We already mentioned the task of  finding a nice combinatorial interpretation of the constructs of friezohedra. Here are some other questions we would like to address.

\smallskip\noindent
$\bullet$  The tridendriform algebras in our examples often satisfy more equations than the tridendrifom ones. Can we make a landscape of the corresponding operad structures?

\smallskip\noindent
$\bullet$ 
  Hopf algebra structures are known for associahedra and permutohedra, see \cite{BR,LR-planar-Hopf}. Can we find sufficient conditions for such structures to exist on a family of polytopes?

\smallskip\noindent
$\bullet$  We would like to explore the ``flip'' order obtained by exchanging the order of elements when seeing constructs as posets, aiming at extending results from \cite{RoncoGTO}. 

\smallskip\noindent
$\bullet$  We also seek comparison results, in the spirit of \cite{LR-planar-Hopf}: given two hypergraphs, one included in the other, what are the relations between the associated polytopes  and between the associated algebras?
\section*{Bestiary of examples}
The examples emphasized in this paper are summed up in the following diagram, where we draw an arrow from $A$ to $B$ if $B$ is ``more truncated'' than $A$, i.e., if the connected subsets of the hypergraph  generating $A$ are connected in the hypergraph generating $B$.
The strict clans are circled.

\begin{center}
\begin{tikzpicture}[scale=0.8]
\node (s) at (0,0) {simplex $\hyper{S}^X$ \cite{Ch00}};
\node (c) at (0,2) {hypercube $\hyper{C}^X$ \cite{Ch00}};
\node[draw, ellipse] (a) at (0,4) {associahedron $\hyper{K}^X$ \cite{LR02}};
\node[draw, ellipse] (pe) at (0,6) {permutohedron $\hyper{P}^X$ \cite{BR}};
\node (e) at (-5,3) {erosohedron $\hyper{E}^X$};
\node[draw, ellipse] (f) at (3,5) {friezohedron $\hyper{F}_X$};
\draw[->] (s)--(c);
\draw[->](c)--(a);
\draw[->](a)--(pe);
\draw[->] (s)--(e);
\draw[->](e)--(pe);
\draw[->] (a)--(f);
\draw[->](f)--(pe);
\end{tikzpicture}
\end{center}

\section*{Glossary}

Below, we sum up the   vocabulary, accompanying each term with an example (in green) coming from the simplices.

A \textcolor{red}{preteam} (like \textcolor{green!50!black}{$(\{\hyper{S}^{1,3}, \hyper{S}^{2,4}\}, \hyper{S}^{1, 2, 3, 4})$}) is the ground for a product. The elements multiplied in the product are constructs of some "participating" hypergraphs and the result is a sum of constructs of the "coordinating hypergraph". A preteam is a pair made of:
\begin{itemize}
\item a set of coordinating hypergraphs,
\item a participating hypergraph.
\end{itemize}
 A \textcolor{red}{strict team} (like \textcolor{green!50!black}{$(\{\hyper{S}^{1,\textcolor{red}{3}}, \hyper{S}^{\textcolor{red}{2,4}}\}, \hyper{S}^{1, 2, 3, 4})$}) is a preteam satisfying some strictness properties (the coordinating hypergraph is more connected than the coordinating hypergraphs in the sense that removing some vertices in some participating  hypergraph disconnects it if the same action disconnects the coordinating hypergraph).

A  \textcolor{red}{strict clan} 
is a set of strict teams satisfying a closure property. It encompasses all the coordinating and participating hypergraphs considered, in any products.

Forming a \textcolor{red}{delegation} consists in picking a strict team and in choosing a construct for every participating hypergraph. These constructs are the objects multiplied in the product.

We finally would like to mention the existence of the terms "polydendriform" \cite{Samuele} and "hypergraphic polytope" \cite{AguiarArdila} 
in the literature, which designate different concepts from the ones presented in this article.

\printbibliography

\end{document}